\documentclass[a4paper, 12pt]{article}     
\usepackage{amsmath}
\usepackage{ stmaryrd }
\usepackage{amsthm}
\usepackage{url,a4wide}
\usepackage{amsmath,xspace,color,pict2e}
\usepackage{amsfonts,amssymb}
\usepackage[UKenglish]{babel}
\usepackage{macros}
\usepackage{tikz-cd}
\usepackage{enumerate}
\usepackage{mathtools}

\begin{document}

\title{Weights and characters for affine Hecke algebras}

\author{Maximilien J. Mackie \\
}
 \date{}

\maketitle

\begin{abstract}
Weights play an essential role in the  classification of simple modules for affine Hecke algebras. In this paper we show the extent to which weights determine simple modules and use this to  describe induction and restriction along isogenies. We also describe the behaviour of hermitian duals and unitarity under  induction and restriction along isogenies.
\end{abstract}

\section{Introduction}
    The representation theory of affine Hecke algebras has a central role in the  Langlands program via the equivalence of categories with Bernstein blocks. An affine Hecke algebra $\He$ contains a large commutative subalgebra $\Ab$ which acts by weights on simple modules.  The character of a module is the set of weights and the dimensions of the weight spaces. 
    
     We have two directions: First, we show that for quasi-simply-connected affine Hecke algebras (see Definition \ref{def: qsc})  simple modules 
     are uniquely determined by their weights, and semisimplifications of finite length modules are uniquely determined by their characters. 
     This 
     generalises results of Evens and Mirkovi\'c \cite{EM} and Barbasch and Ciubotaru \cite{BC2013}, and the result on semisimplifications was also recently shown by \cite{AntorOkada} by a different method. For arbitrary isogeny class we show that weights (resp. characters) determine simple modules (resp. semisimplifications of finite length modules) up to explicit outer automorphisms.

    Second, we study induction and restriction along central morphisms (see Definition \ref{def: centralMorphism}).
     We show that an injective central morphism $\He_1 \to \He_2$ exhibits $\He_2$ as a graded $\He_1$-algebra and that induction and restriction  along isogenies preserves semisimplicity, hermitian duals, and unitarity. The hermitian pairing on induced modules makes use of the graded structure  and is markedly different from the  case of  parabolic induction studied by Opdam and Solleveld \cite{opdamsolHermitian}.

\paragraph{Acknowledgement}
We thank Dan Ciubotaru for suggesting the problems investigated in this paper and for  helpful explanations, and (in alphabetical order) Stefan Dawydiak, Mick Gielen, and Emile Okada for useful discussions.

\section{Preliminaries} \label{sec: prelim}

Fix a reduced  root datum $\R = (X, R, Y , R^\vee)$ with finite Weyl group $W$, affine Weyl group $W^a = W \ltimes \Z R$ with simple reflections $S_a$ and extended affine Weyl group $\widetilde{W} = W \ltimes X$.  
We say the root datum and associated affine Hecke algebra are
simply-connected (resp. adjoint)  if  $Y / \Z R^\vee$ (resp. $X / \Z R$) is torsion-free.

By \cite[Lemma 1.7]{Lusztig}, a based root datum $\R$ decomposes into a direct sum of based  root data $\R_{s} \oplus  \R_{B,1} \oplus \dots \oplus \R_{B,n} $ such that
\begin{enumerate}
    \item $\alpha^\vee \not\in 2Y$ for all $\alpha \in R_s$, and
    \item  there is a unique $\alpha \in \Pi_{B,i}$ such that $\alpha^\vee \in 2Y$.
\end{enumerate}
Such summands  are called primitive. This gives a decomposition of  $\He(\R)$ into a finite tensor product of primitive affine Hecke algebras $\He(\R_i)$, and furthermore as the simple $\He(\R)$-modules are finite dimensional they decompose as tensor products of simple $\He(\R_i)$-modules. In arguments we may therefore $\R$ is primitive.

Write $\He = \He(\R) $ for the (specialised) affine Hecke algebra  associated to $\R$ with parameters $\lambda, \lambda^* \colon \Delta \to \Re \backslash \{ 0 \}$ and $q \in \Re_{> 1}$. The parameters $\lambda, \lambda^*$ are constant on $\widetilde{W}$-orbits  and
\begin{equation} \label{eq: parameters}
 \lambda(\alpha) = \lambda^*(\alpha) \text{ if }  \alpha^\vee \not\in 2Y.
\end{equation}
The condition $\alpha^\vee \not\in 2Y$ occurs only for short roots $\alpha $ in $\R_B$, the adjoint root datum of type $B$. In this case we impose further
\begin{equation} \label{eq: parametersNE}
    \lambda(\alpha)  +  \lambda^*(\alpha) \neq 0 \text{ if }  \alpha^\vee \in 2Y.
\end{equation}

The affine Hecke algebra $\He$ has  linear decomposition $\He(W) \otimes_\C \Ab$ where $\He(W)$ is the finite-dimensional Hecke algebra and $\Ab = \C[X]$. Identify $\He(W)$ and $\C[X]$ with their images in $\He$ and write $T_w $ for the basis element of $\He(W)$ associated to $w \in W$ and $\theta_x $ similarly. 
Let $T  = \Hom(X, \C^\times) = Y \otimes_\Z \C^\times$  and $\afr = Y \otimes_\Z \Re$ with complexification $\afr_\C = Y \otimes_\Z \C$.

     The Iwahori-Hecke algebra $\He_I(G)$ of a reductive $p$-adic group $G$ is an affine Hecke algebra on the root datum of the Langlands dual $\R({}^LG) = \R(G)^\vee$. 

    Write $\Irr(\He)$ for the set of isomorphism classes of simple left $\He$-modules $V$. These are finite dimensional so  upon restriction to $\Ab$ admit a weight space decomposition $V = \bigoplus_{s \in T} V_s$ where $V_s$ is the  generalised  $s$ weight space. The set of $s$ such that $V_s \neq 0$ are the $\Ab$-weights $\Omega(V) = \Omega_\Ab(V)$ of $V$. Their real split parts are in bijection with $\Lambda (V) = \{ \log |s| : s \in \Omega(V) \}$ via the exponential map.
    The centre $\Ab^W$ of $\He$ will act by a central character $\cc(V)$ (often called an infinitesimal character). This is a $W$-orbit in $T$ containing $\Omega(V)$. 
    Fix a base of simple roots $\Delta  \subset R$. We say $V$ is tempered if 
    \[
    \Lambda(V) \subset \afr^- = \big\{ \sum_{\alpha \in \Delta} \lambda_\alpha \alpha^\vee : \lambda_\alpha \leq 0 \big\}.
    \]

The Iwahori-Matsumoto involution $\IM \colon \He \to \He$ is the $\C$-linear algebra automorphism defined by
\[
\IM(T_w) = -q T_w^{-1}, \: \IM(\theta_x) = \theta_x^{-1}.
\]
This induces an automorphism on $\He$-modules preserving irreducibility and satisfying $\IM(V_s) = (\IM(V))_{s^{-1}}$. We say $V$ is antitempered if $\IM(V)$ is tempered. 

The following notation is from \cite{Solleveld2010}.  For $P \subset \Delta$ let 
\begin{equation*}
\begin{array}{l@{\qquad}l}
 R_P = R \cap \Z P &  R_P^\vee = R^\vee \cap \Z P^\vee ,\\
 \afr_P = \Re \otimes P^\vee &  \afr^P = ( \afr^*_P )^\perp ,\\
 \afr^*_P = \Re \otimes P &  \afr^{P*} = ( \afr_P )^\perp  ,\\
X_P = X \big/ \big( X \cap (P^\vee )^\perp \big) &
X^P = X / (X \cap  \Q P ) , \\
Y_P = Y \cap  \Q P^\vee & Y^P = Y \cap P^\perp , \\
T_P = {\Hom}_{ \Z} (X_P,  \C^\times ) &
T^P = {\Hom}_{ \Z} (X^P,  \C^\times ) , \\
 \R_P = ( X_P ,R_P ,Y_P ,R_P^\vee ,P) &  \R^P = (X,R_P ,Y,R_P^\vee ,P) , \\
 \afr^{P++}  =  \{ \mu \in  \afr^P : \la \alpha, \mu \ra > 0 , 
  \forall \alpha \in \Delta \setminus P \} , & T^{P++}  = \exp ( \afr^{P++}), \\
  \afr_P^- =  \{ \sum_{\alpha \in P} c_\alpha \alpha^\vee : c_\alpha \leq 0 \}, &
    T_P^- = \exp (  \afr_P^-).
\end{array} 
\end{equation*}

The root subdata $\R^P $ and $\R_P$ both have Weyl group $W_P$. The associated affine Hecke algebras are $\He^P = \He(\R^P)$ with linear decomposition $\He(W_P) \otimes_\C \C[X]$ and $\He_P = \He(\R_P)$ with linear decomposition $\He(W_P) \otimes_\C \C[X_P]$.  Write $x \mapsto x_P$ for the quotient $X \to X_P$ and similarly $x \mapsto x^P$ for $X \to X^P$. The canonical 
surjection $\He^P \to \He_P$ is an algebra homomorphism. 

For $t \in T^P$ there is an algebra automorphism
\begin{equation}  \label{eq: twist}
\phi_t \colon \He^P \to \He^P, \quad \phi_t (\theta_x T_w) = t (x^P) \theta_x T_w
\end{equation}
Given an $\He_P$-module $\pi$ we write $\pi \circ \phi_t$ for the inflated $\He^P$-module twisted by $\phi_t$. 
It follows 
\begin{equation*} 
    \Omega(\pi \circ \phi_t) = \Omega(\Inf_{\He_P}^{\He^P} \pi) \otimes \Inf_{X^P}^{X}t.
\end{equation*}

\begin{lemma} \label{inducedWeights}
    Let $\pi$ be an $\He^P$-module. The weights of $\Ind{\He^P}{\He} \pi$ are $W^P \cdot \Omega(\pi)$ where $W^P$ is the set of minimal length coset representatives for $W / W_P$. The $\He^P$-submodule $1 \otimes \pi$ has weights $\Omega(\pi)$.
\end{lemma}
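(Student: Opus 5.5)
We filter the induced module by Bruhat order on $W^P$ and read off the weights from the associated graded pieces.

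\textbf{Setup.} By the Bernstein presentation, $\He$ is free as a right $\He^P$-module with basis $\{T_w : w \in W^P\}$, since $\He(W)=\bigoplus_{w\in W^P} T_w \He(W_P)$ and $\He^P = \He(W_P)\otimes \C[X]$. Hence
\[
\Ind{\He^P}{\He}\pi = \bigoplus_{w \in W^P} T_w \otimes \pi .
\]
The second claim is then immediate: $\Ab=\C[X]\subset \He^P$, so $\Ab$ acts on $1\otimes\pi$ exactly as on $\pi$, and $\Omega(1\otimes\pi)=\Omega(\pi)$.

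\textbf{Filtration.} Choose a total order $w_1, w_2, \dots$ of $W^P$ refining the Bruhat order, with $w_1=1$. Set
\[
F_{\le k} = \bigoplus_{i \le k} T_{w_i}\otimes \pi .
\]
We use the Bernstein--Lusztig relation $\theta_x T_s - T_s\theta_{s(x)} \in \C[X]$. Inducting on length, it gives
\[
\theta_x T_w \in T_w\theta_{w^{-1}(x)} + \sum_{v<w} T_v \C[X].
\]
For $v<w$ with $w\in W^P$, write $v = v' u$ with $v'\in W^P$ and $u\in W_P$. Then $v'\le v<w$, and $T_v\C[X] = T_{v'}T_u\C[X] \subset T_{v'}\He^P$. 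Therefore
\[
\theta_x(T_w\otimes m) \equiv T_w\otimes \theta_{w^{-1}(x)} m \pmod{F_{<w}},
\]
so each $F_{\le k}$ is $\Ab$-stable. The quotient $F_{\le k}/F_{<k}$ is isomorphic, as an $\Ab$-module, to $\pi$ twisted by $w_k^{-1}$. Its generalised weights are $w_k\cdot\Omega(\pi)$, because the weight $s$ of $\pi$ becomes $x\mapsto s(w_k^{-1}x)=(w_k s)(x)$.

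\textbf{Conclusion.} Generalised weight spaces are exact in short exact sequences of finite-dimensional $\Ab$-modules. The weights of the induced module are therefore the union over $k$ of the weights of the graded pieces:
\[
\Omega\big(\Ind{\He^P}{\He}\pi\big) = W^P\cdot\Omega(\pi).
\]
This argument needs $\pi$ finite dimensional, or at least a sum of generalised weight spaces, which is implicit in the statement.

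\textbf{Main obstacle.} The delicate step is the triangularity of the $\Ab$-action relative to the coset decomposition. One has to check that the lower-order terms coming from the Bernstein relation really land in $\sum_{v'<w} T_{v'}\otimes\pi$ with $v'\in W^P$. This is where the factorisation $v=v'u$ and the inequality $v'\le v$ (the minimal coset representative lies below every element of its coset in Bruhat order) are needed.
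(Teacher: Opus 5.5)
Your proposal is correct and follows the paper's route. The paper's proof is only the linear decomposition $\Ind{\He^P}{\He}\pi=\bigoplus_{w\in W^P}T_w\otimes\pi$; your Bruhat-order filtration supplies the triangularity of the $\Ab$-action that the paper leaves implicit (the summands $T_w\otimes\pi$ are not themselves $\Ab$-stable), and the steps you use to get it are sound: the Bernstein relation, the reduction to $v'\le v<w$ with $v'\in W^P$, and exactness of generalised weight spaces.
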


\begin{proof}
This follows from the linear decomposition
\[\Ind{\He^P}{\He} \pi = \bigoplus_{w \in W^P} T_w \otimes \pi.\]
\end{proof}
\qed

\subsection{Langlands classification} 
 We review the Langlands classification  \cite{Solleveld2010, Evens} for simple $\He(\R)$-modules $L(P, \pi, t)$ in terms of standard modules $M(P, \pi, t) = \Ind{\He^P}{\He} (\pi \circ \phi_t)$. The standard argument for uniqueness is \cite[\S 2.7]{Evens} which shows  maximal weights in $M(P, \pi, t) $ occur only in the linear subspace $1 \otimes (\pi \circ \phi_t)$. We strengthen this for use in later arguments.

   

\begin{proposition} \label{prop: Langlands class}
    Let $\pi \in \Irr(\He_P)$ be  tempered and $t \in T^{P}$ such that $|t| \in T^{P++}$. The weights  occurring in the linear quotient $M(P, \pi, t) / 1 \otimes (\pi \circ \phi_t)$  are disjoint from $\Omega(\pi \circ \phi_t)$.
\end{proposition}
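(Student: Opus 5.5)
By Lemma \ref{inducedWeights}, the weights of the linear quotient $M(P,\pi,t)/1\otimes(\pi\circ\phi_t)$ are contained in $\bigcup_{w \in W^P\setminus\{1\}} w\cdot\Omega(\pi\circ\phi_t)$. To make this precise, I would filter $M(P,\pi,t)=\bigoplus_{w\in W^P} T_w\otimes(\pi\circ\phi_t)$ by Bruhat order. The standard triangularity of $\theta_x T_w$ modulo lower terms shows that each subquotient $T_w\otimes(\pi\circ\phi_t)$ has weights $w\cdot\Omega(\pi\circ\phi_t)$. It therefore suffices to show the following: if $s\in\Omega(\pi\circ\phi_t)$, $w\in W^P$ with $w\neq 1$, and $s'\in\Omega(\pi\circ\phi_t)$, then $w s\neq s'$. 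Since equal elements of $T$ have equal absolute values, I will pass to real parts and show $w\log|s|\neq \log|s'|$.

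Next I describe $\Lambda(\pi\circ\phi_t)$. Every $\mu\in\Lambda(\pi\circ\phi_t)$ decomposes along $\afr=\afr_P\oplus\afr^P$ as $\mu=\mu_P+\nu$. Here $\nu=\log|t|\in\afr^{P++}$ is fixed, and $\mu_P\in\afr_P^-$ because $\pi$ is tempered. Temperedness gives $\mu_P=\sum_{\alpha\in P}c_\alpha\alpha^\vee$ with $c_\alpha\le 0$, so we have
\[
\mu=\nu-\textstyle\sum_{\alpha\in P} d_\alpha\alpha^\vee,\qquad d_\alpha\ge 0 .
\]

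The key step, and the main obstacle, is to show that $w\mu\neq\mu'$ for all such $\mu,\mu'$ and all $w\in W^P\setminus\{1\}$. The plan is to compare the two sides using a linear functional that is positive on $\afr^{P++}$ and vanishes on $\afr_P$. Take the fundamental-coweight-type pairing $\langle\varpi,\cdot\rangle$, where $\varpi=\sum_{\alpha\in\Delta\setminus P}\varpi_\alpha$ pairs to $1$ with the simple coroots outside $P$ and to $0$ on $P^\vee$ (after rescaling to the span of the coroots). Then $\langle\varpi,\mu'\rangle=\langle\varpi,\nu\rangle$ for every $\mu'$.

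For $w\mu$, I would follow the Langlands-lemma argument of \cite[\S2.7]{Evens}, \cite{Solleveld2010}. Since $w\in W^P$, $w$ maps positive roots of $R_P$ to positive roots, so $w\alpha^\vee$ is a positive coroot for $\alpha\in P$. Hence $w\mu_P$ is still a nonpositive combination of simple coroots. On the other hand, $w\ne 1$ in $W^P$ means $\nu-w\nu$ is a nonzero nonnegative combination of simple coroots, and its coefficient on some $\beta\in\Delta\setminus P$ is strictly positive. The reason is that $\nu$ is dominant with stabiliser exactly $W_P$, and $w\notin W_P$; some care is needed here because $\nu$ is only strictly positive on $\Delta\setminus P$.

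Putting these together, $\mu'-w\mu = (\nu-w\nu)+(\mu'_P-w\mu_P)$. The second term lies in the span of $\Z P^\vee$ and $w\cdot$ (positive coroots of $R_P$). The difficulty is that $w\alpha^\vee$ can involve coroots outside $P$, so this term does not simply vanish under $\varpi$. I would handle this by pairing with $\varpi$ and using that $-w\mu_P$ is a nonnegative combination of positive coroots. This gives $\langle\varpi,\mu'-w\mu\rangle\ge\langle\varpi,\nu-w\nu\rangle>0$. Hence $w\mu\ne\mu'$, and the two weights are distinct.
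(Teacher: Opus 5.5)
Your proposal follows the paper's proof. The paper also pairs with $\rho^P=\sum_{\alpha\in\Delta\setminus P}\omega_\alpha$ (your $\varpi$), uses $w(P)\subset R^+$ for $w\in W^P$ to get $\langle \varpi, w\mu_P\rangle\le 0$, and shows $\langle\varpi, w\nu\rangle<\langle\varpi,\nu\rangle$. Your Bruhat filtration is a more careful justification of the weight description that the paper simply reads off from Lemma~\ref{inducedWeights}.

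The one step you leave open is exactly where the paper does its work. Dominance of $\nu$ together with $\mathrm{Stab}_W(\nu)=W_P$ only tells you that $\nu-w\nu$ is a \emph{nonzero} nonnegative combination of simple coroots. It does not by itself rule out $\nu-w\nu\in\afr_P$. Ruling that out is precisely what ``positive coefficient on some $\beta\in\Delta\setminus P$'' means, and the non-regularity of $\nu$ that you flag is not the real issue.

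To close it, take a reduced expression $w=s_1\cdots s_n$ with $s_i=s_{\alpha_i}$, so that
\[
\nu-w\nu=\sum_{i=1}^n\langle\alpha_i,\nu\rangle\, s_1\cdots s_{i-1}\alpha_i^\vee .
\]
Each $s_1\cdots s_{i-1}\alpha_i^\vee$ is a positive coroot and each $\langle\alpha_i,\nu\rangle\ge 0$. Since $w\notin W_P$, there is a first index $j$ with $\alpha_j\notin P$. Then $s_1\cdots s_{j-1}\in W_P$, so $s_1\cdots s_{j-1}\alpha_j^\vee\in\alpha_j^\vee+\Z P^\vee$ pairs to $1$ with $\varpi$. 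Also $\langle\alpha_j,\nu\rangle>0$ because $\nu\in\afr^{P++}$. All other terms pair nonnegatively with $\varpi$, so $\langle\varpi,\nu-w\nu\rangle>0$. This is the paper's computation of $w\omega_j^\vee$ in slightly different form.

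Alternatively, fix a $W$-invariant inner product. Then $\afr^P\perp\afr_P$, so if $w\nu=\nu-z$ with $z\in\afr_P$, we get $\|\nu\|^2=\|w\nu\|^2=\|\nu\|^2+\|z\|^2$. This forces $z=0$, contradicting $w\nu\neq\nu$. With either argument inserted, your proof is complete.
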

\begin{proof}
    From Lemma~\ref{inducedWeights},  $\Omega(M(P, \pi, t)) = \{ w (s \otimes t) : w \in W^P, s \in \Omega(\pi) \}$ and the quotient $M(P, \pi, t) / 1 \otimes (\pi \circ \phi_t)$ has weights  $\{ w (s \otimes t) : 1 \neq w \in W^P , s \in \Omega(\pi) \}$. 
   Note that $\log |ws| = w \log |s|$. Let \[\lambda = \log|s|, \: \lambda' = \log|s'| \in \Lambda_P(\pi) \subset \afr_P^-\] and $\nu = \log|t| \in \afr^{P++}$. 
  If $w(s \otimes t) = s'\otimes t$ 
  then the image of $ w(\lambda + \nu) $ in $\afr^P$ is $\nu$. We will show this does not occur.

   Write $\lambda = \sum_{i \in P } d_i \alpha^\vee $ and 
   $\nu = \sum_{j \not\in P } c_j \omega_j^\vee$ with all $d_i \leq 0$ and $c_i >0$. Let $\rho^P = \sum_{i \not\in P} \omega_i$. There is a direct sum decomposition $\afr = \afr_P \oplus \afr^P$ and $\la \cdot , \rho^P \ra$ is trivial on $\afr_P$, so 
   it suffices to show $\la w(\lambda + \nu) , \rho^P \ra \neq \la \nu , \rho^P \ra$.

   As $w \in W^P$ it follows $wP \subset R^+$ so $\la w \lambda , \rho^P \ra \leq 0 = \la \lambda , \rho^P \ra$.
   Let $w = s_1 \dots s_n$ with $s_i = s_{\alpha_i}$. Then one sees by induction 
   \[
   w \omega_j^\vee = \omega_j^\vee  - \sum_i s_1 \dots s_{i-1}\alpha_i^\vee \la \omega_j^\vee , \alpha_i \ra = \omega_j^\vee  - \sum_{\alpha_i = \alpha_j} s_1 \dots s_{i-1}\alpha_i^\vee
   \]
   and furthermore by {\cite[\S 1.7]{Humphreys_1990}}
   \begin{equation} \label{eq: root systems}
        \{ \alpha \in R^+ : w\alpha \in R^- \} = \{ \alpha_n , s_n \alpha_{n-1}, \dots , s_n \dots s_2 \alpha_1 \}
    \end{equation}
    with all terms on the right distinct.
   Applying this to $w^{-1}$ gives $\la w \omega_j^\vee , \rho^P \ra \leq  \la \omega_j^\vee , \rho^P \ra$ for all $j$. We show $\la w \omega_j^\vee , \rho^P \ra <  \la \omega_j^\vee , \rho^P \ra$ for some $j$. As $ wP > 0$ it follows from (\ref{eq: root systems}) that $\alpha_n^\vee \not\in P$. Let $j$ be the lowest index such that $\alpha_j^\vee \not\in P$. Then $\alpha_i^\vee \in P$ for all $i < j$ so $s_1 \dots s_{j-1}\alpha_j^\vee + \afr_P = \alpha_j^\vee + \afr_P \neq 0 + \afr_P$. For this $j$, $\la w \omega_j^\vee , \rho^P \ra < \la \omega_j^\vee , \rho^P \ra$ and $\omega_j^\vee$ occurs in $\nu$. Hence $\la w \nu , \rho^P \ra < \la \nu , \rho^P \ra$.
   Combining these gives $\la w (\lambda + \nu) , \rho^P \ra < \la \nu , \rho^P \ra$ so $w(\lambda + \nu) + \afr_P \neq \nu + \afr_P$.
\end{proof}
\qed

Let $\{ \omega_1^\vee , \dots , \omega_n^\vee \} $ be a basis of $R^\vee \otimes \Re \subset \afr$ 
dual to $\Delta$.  
By a lemma of Langlands \cite[IV, 6.11-6.13]{BorelWallach}, for each $\lambda \in \afr$ there is a unique subset $F_\lambda \subset \{1, \dots, n\}$ such that 
\[
\lambda = \lambda^\Delta + 
\sum_{j \notin F_\lambda} c_j \omega_j^\vee + \sum_{i \in F_\lambda} d_i \alpha_i^\vee
\]
with $\lambda^\Delta \in \afr^\Delta$, $ c_j >0$, $ d_i \leq 0$. Let 
$\lambda^0 = \lambda^\Delta + \sum_{j \notin F_\lambda} c_j \omega_j^\vee $ and $P_\lambda = \{ \alpha_i \in \Delta : i \in F_\lambda \}$. 
Then \[ 
\lambda^0 \in \afr^{P_\lambda ++}, \quad \lambda - 
\lambda^0  \in \afr_{P_\lambda}^-\] 
giving $\lambda \in \afr_{P_\lambda}^- \oplus \afr^{P_\lambda ++}$.

Order $\afr$ by $\lambda \geq \mu$ 
if $\lambda - \mu \in \Re_{\geq 0} R^{\vee +}$.
Then $\lambda \geq \mu$ implies $\lambda^\Delta = \mu^\Delta$ and $\lambda^0 \geq \mu^0$ by \cite[(2.13)]{RamKril}.

\begin{lemma}[Langlands classification]
    Let $\pi \in \Irr(\He_P)$ be  tempered and $t \in T^{P}$ such that $|t| \in T^{P++}$. Then the $\He$-module $M(P, \pi, t) = \Ind{\He^P}{\He} (\pi \circ \phi_t)$ has a unique irreducible quotient $L(P, \pi, t)$ and every irreducible $\He$-module has this form. If $L(P, \pi, t) \cong L(P', \pi', t')$ then $P = P'$ and $\pi \circ \phi_t \cong \pi' \circ \phi_{t'}$. 
\end{lemma}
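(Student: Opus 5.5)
We prove the three parts of the Langlands classification in turn: uniqueness of the irreducible quotient, exhaustion, and the uniqueness of the data. The tool throughout is Proposition~\ref{prop: Langlands class}, combined with the decomposition $\lambda \in \afr_{P_\lambda}^- \oplus \afr^{P_\lambda ++}$ and the ordering on $\afr$.

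\emph{Unique quotient.} Write $\sigma = \pi \circ \phi_t$ and $M = M(P,\pi,t)$. By Frobenius reciprocity, $\Hom_{\He}(M, V) = \Hom_{\He^P}(\sigma, V)$. So every nonzero quotient of $M$ is generated by the image of $1 \otimes \sigma$, and $\sigma$ embeds into it. Let $N$ be the sum of all $\He$-submodules $N' \subset M$ with $N'_s = 0$ for all $s \in \Omega(\sigma)$. I claim that any proper submodule $N'$ has this property. Suppose $N'_s \neq 0$ for some $s \in \Omega(\sigma)$. By Proposition~\ref{prop: Langlands class}, the generalised weight space $M_s$ for $s \in \Omega(\sigma)$ lies inside $1 \otimes \sigma$. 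Hence $N' \cap (1 \otimes \sigma)$ is a nonzero $\He^P$-submodule of $1\otimes\sigma$. It is an $\He^P$-submodule because $N'_s = N' \cap M_s$ and $1 \otimes \sigma$ is $\He^P$-stable; to reach all of $N' \cap (1\otimes\sigma)$, sum over all $s \in \Omega(\sigma)$. Since $\sigma$ is irreducible (it is $\pi$ twisted by a character), this intersection is all of $1 \otimes \sigma$. As $1 \otimes \sigma$ generates $M$, we get $N' = M$. Therefore $N$ is the unique maximal proper submodule, and $L(P,\pi,t) = M/N$ is the unique irreducible quotient. Moreover, $L$ contains $\sigma$ with all of its weights, and $L$ is characterised as the irreducible module containing the weights $\Omega(\sigma)$ that is a quotient of $M$.

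\emph{Exhaustion.} Let $V$ be irreducible. Choose $s \in \Omega(V)$ with $\lambda = \log|s|$ such that $\lambda^0$ is maximal for the order $\geq$ among weights of $V$. Alternatively, maximise $\la \lambda^0, \lambda^0\ra$, which is cleaner. Set $P = P_\lambda$. Consider the $\He^P$-module $V$ restricted to $\He^P$, and take the sum of the generalised weight spaces $V_{s'}$ with $\log|s'|$ having the same projection to $\afr^P$ as $\lambda$. Because $\He^P$ only moves weights by $W_P$, this sum is $\He^P$-stable. The main obstacle is to show the following. Its weights $s'$ satisfy $\log|s'| - \lambda^0 \in \afr_P^-$ after choosing $\lambda$ extremal, which uses the maximality together with \cite[(2.13)]{RamKril}. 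In addition, the real part of the $T^P$-component is constant, equal to $|t| = \exp\lambda^0 \in T^{P++}$. An irreducible $\He^P$-submodule $\sigma$ of this sum then has the form $\pi \circ \phi_t$, with $\pi$ an $\He_P$-module whose weights have $\log|\cdot| \in \afr_P^-$, i.e.\ $\pi$ is tempered. Here I would use that $\Ab^{W_P}$-central elements from $X^P$ act by scalars, which extracts $t$. Frobenius reciprocity then gives a nonzero map $M(P,\pi,t) \to V$, so $V \cong L(P,\pi,t)$.

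\emph{Uniqueness of data.} Suppose $L(P,\pi,t) \cong L(P',\pi',t')$. The argument above shows that the weights of $L(P,\pi,t)$ maximising $\lambda^0$ are exactly those in $\Omega(\pi\circ\phi_t)$. This uses the inequality $\la w(\lambda+\nu),\rho^P\ra < \la\nu,\rho^P\ra$ from the proof of Proposition~\ref{prop: Langlands class}, combined with the ordering. Hence $P = P_\lambda = P'$ and $\log|t| = \log|t'|$ are read off from $V$. The $\He^P$-submodule spanned by these extremal weight spaces is $\sigma$ on one side and $\sigma'$ on the other, because by the unique-quotient argument it equals the image of $1\otimes\sigma$. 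Therefore $\pi\circ\phi_t \cong \pi'\circ\phi_{t'}$.
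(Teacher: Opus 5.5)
Your proposal is correct and follows essentially the paper's route. The unique quotient comes from Proposition~\ref{prop: Langlands class}, i.e.\ the weights of $\pi\circ\phi_t$ occur in $M(P,\pi,t)$ only inside $1\otimes(\pi\circ\phi_t)$. Exhaustion comes from picking a weight with $\lambda^0$ maximal, setting $P=P_\lambda$, and applying Frobenius reciprocity to an irreducible $\He^P$-submodule; your central-character argument fills in the step the paper cites from Solleveld.

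The one imprecision is that the central elements extracting $t$ are $\theta_x$ with $x\in X\cap(P^\vee)^\perp$, not ``elements of $X^P$''. Their image has only finite index in $X^P$, so $t$ is fixed only up to a unitary character of a finite group. This affects neither $|t|$ nor the temperedness of $\pi$, and it is exactly the ambiguity handled by Lemma~\ref{lemma: t=t'}.
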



The construction is as follows: starting with an irreducible $\He$-module $L$, 
choose $s \in \Omega(L)$ such that $\lambda = \log |s| \in \Lambda(L)$ has $\nu = \lambda^0$ maximal. Let $P = P_\lambda$ and take $\He^P$-module $\rho = \He^P  v_s$ where $v_s$ is an $s$-weight vector. Choose an irreducible submodule of $\rho$; Solleveld \cite{Solleveld2010} shows these are  of the form $\pi \circ \phi_t$ with $\pi$ an irreducible $\He_P$-module and $\log|t| = \nu$. Then 
$L = L(P, \pi, t)$ and the $\Ab_P$-weights of $\pi$ satisfy 
\[
\Lambda(\pi) \subset \{  \lambda'|_{\afr_P} : \lambda' \in \Lambda(L), \:  F_{\lambda'} = F_{\lambda} , \: \lambda'|_{\afr^P} = \nu \}. 
\]

The following adaptation of \cite[Lemma 2.2.3]{Solleveld2010} will also be needed.

\begin{lemma} \label{lemma: t=t'}
    Let $\pi$ be a tempered $\He_P$-module and $t, t' \in T^P$. If $\Omega(\pi \circ \phi_t) \cap \Omega(\pi \circ \phi_{t'}) \neq \emptyset$ then there is a tempered $\He_P$-module $\pi'$ satisfying $L(P, \pi ,t') = L(P, \pi' ,t)$.
\end{lemma}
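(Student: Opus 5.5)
The plan is to show that the hypothesis forces $t' t^{-1}$ into the finite group $T_P \cap T^P$, and that twisting by such an element can be moved from $T^P$ onto the tempered module $\pi$.

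\textbf{Step 1: locate $t't^{-1}$.} By the formula preceding Lemma~\ref{inducedWeights}, $\Omega(\pi\circ\phi_t) = \{ s\, t : s \in \Omega(\pi)\}$, where $s \in T_P$ is inflated to $T$ through $X \to X_P$ and $t \in T^P$ is inflated through $X \to X^P$. A common weight gives $s t = s' t'$ with $s, s' \in \Omega(\pi)$. Hence
\[
u := t' t^{-1} = s (s')^{-1}
\]
is a character of $X$ that lies both in $T^P$ and in the image of $T_P$. Concretely:
\begin{itemize}
\item $u$ is trivial on $X \cap \Q P$, since $u \in T^P$;
\item $u$ is trivial on $X \cap (P^\vee)^\perp$, since $u$ comes from $T_P$.
\end{itemize}
In particular $\log|u| \in \afr_P \cap \afr^P = 0$. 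So $|t| = |t'|$ and $u$ is unitary (indeed of finite order). Consequently the standing hypothesis $|t| \in T^{P++}$ needed to form $L(P,\cdot,t)$ holds for $t$ exactly when it holds for $t'$.

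\textbf{Step 2: $u$ defines an automorphism of $\He_P$.} Because $u$ factors through $X_P$, the formula $\phi_u(\theta_{x_P} T_w) = u(x)\,\theta_{x_P} T_w$ is well defined on $\He_P$. I would check it is an algebra automorphism by verifying the Bernstein relations, in the form
\[
\theta_x T_s - T_s\theta_{sx} = \bigl(\text{parameter expression}\bigr)\,\frac{\theta_x - \theta_{sx}}{1-\theta_{-\alpha}}
\]
(or its variant with $\theta_{-2\alpha}$ in the type $B$ case with unequal parameters), for $s = s_\alpha$, $\alpha \in P$.
\begin{itemize}
\item $u(sx) = u(x)$, because $sx - x \in \Z\alpha \subset \Q P$ and $u$ is trivial on $X \cap \Q P$. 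So both sides scale by $u(x)$.
\item $u(\alpha) = 1$ for the same reason, so the denominators are unchanged.
\end{itemize}
This is the only real verification, and I expect it to be routine. Moreover, the twist on $\He_P$ is compatible with the twist on $\He^P$: one has $\phi_{t'} = \phi_t \circ \phi_u$ on $\He^P$, where $\phi_u$ on $\He^P$ is the pullback of the $\He_P$-automorphism through the surjection $\He^P \to \He_P$.

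\textbf{Step 3: conclude.} Set $\pi' = \pi \circ \phi_u$, an $\He_P$-module.
\begin{itemize}
\item $\pi'$ is irreducible whenever $\pi$ is, since twisting by an automorphism preserves irreducibility.
\item $\Omega(\pi') = u\,\Omega(\pi)$, so $\Lambda(\pi') = \Lambda(\pi) \subset \afr_P^-$, because $|u| = 1$. Thus $\pi'$ is tempered.
\end{itemize}
Using the compatibility from Step 2, inflating and twisting gives an identity of $\He^P$-modules:
\[
\pi' \circ \phi_t = \pi \circ \phi_u \circ \phi_t = \pi \circ \phi_{t'}.
\]
Therefore $M(P,\pi',t) = M(P,\pi,t')$. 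Taking the unique irreducible quotients from the Langlands classification yields $L(P,\pi,t') = L(P,\pi',t)$.
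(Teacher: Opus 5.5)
Your proposal is correct and follows the paper's proof essentially step for step. The paper also sets $u = t't^{-1}$ and notes that it is trivial on both $X \cap \mathbb{Q}P$ and $X \cap (P^\vee)^\perp$, so twisting by $u$ gives a well-defined automorphism $\psi_u$ of $\He_P$ that preserves temperedness since $|u|=1$; it then checks $(\pi\circ\psi_u)\circ\phi_t = \pi\circ\phi_{t'}$. Your explicit verification of the Bernstein relation, and your observation that $|t|=|t'|$ so that $L(P,\pi',t)$ is a legitimate Langlands quotient, are useful details that the paper leaves implicit.
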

\begin{proof}
    Let $s \otimes t = s' \otimes t' \in \Omega(\pi \circ \phi_t) \cap \Omega(\pi \circ \phi_{t'})$. Then $s,s' \in T_P$  are trivial on $X \cap P^{\vee\perp}$ and hence $t = t'$ on $X \cap P^{\vee\perp}$. Also by definition $t = t' = 1$ on $X \cap \Q P$.
    So $u = t' t^{-1} \colon X \to \C^\times$ factors through the finite group
    \[
    u \colon X /( X \cap P^{\vee\perp} +X \cap \Q P) \to \C^\times.
    \]
    and hence $\psi_u \colon \He_P \to \He_P, \theta_{x_P} T_w \mapsto u(x_P) \theta_{x_P} T_w$ is a well-defined algebra automorphism. The $\He_P$-module $\pi \circ \psi_u$ is irreducible and  $|u(X)| = 1$ so $\Lambda(\pi \circ \psi_u) = \Lambda(\pi)$ so it stays tempered. Finally
    \begin{equation*}
        (\pi \circ \psi_u) \circ \phi_{t} (\theta_x T_w) =  \pi(\theta_x T_w) t'(x) t^{-1}(x) t(x) = \pi  \circ \phi_{t'} (\theta_x T_w).
    \end{equation*}
\end{proof}
\qed

\section{Weights and characters} 
A result of \cite{EM} is that weights uniquely determine simple modules for simply-connected affine Hecke algebras with equal nonzero parameters.
A similar result of
\cite{BC2013} is that for graded affine Hecke algebras with nonzero parameters, the graded analogue of $\Ab$-characters of simple modules are linearly independent. In this section we prove both results for a class of  affine Hecke algebras.

\begin{lemma}[{\cite[Theorem 2.2]{kato1981irreducibility}, see also \cite[\S 3]{Solleveld_2021}}] \label{lemma: Kato original}
    Let $t \in T$ and
\[
c_\alpha = \frac{\left(\theta_\alpha - q^{-\frac{\lambda^*(\alpha) + \lambda(\alpha)}{2}}\right)\left(\theta_\alpha + q^{\frac{\lambda^*(\alpha) -  \lambda(\alpha)}{2}}\right)}{(\theta_\alpha - 1)(\theta_\alpha + 1)}.
\]
The induced module $\Ind{\Ab}{\He} t$ is irreducible if and only if $c_\alpha(t) \neq 0$ for all $\alpha \in R$ and $\Stab_W(t)$ is generated by $\{s_\alpha : \alpha \in R, s_\alpha(t) =t, c_\alpha^{-1}(t) =0 \}$.
\end{lemma}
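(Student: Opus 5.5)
This is Kato's irreducibility criterion for principal series. The plan is to prove it with intertwining operators $\tau_w$ that act on the principal series $I(t) = \Ind{\Ab}{\He} t$. For a simple root $\alpha$ and simple reflection $s = s_\alpha$, I would first work in the localisation $\He \otimes_{\Ab^W} \C(T)$. There I define the normalised intertwiner
\[
\iota_s = (\theta_\alpha - 1)T_s - (\text{rational correction}),
\]
chosen so that $\iota_s \theta_x = \theta_{sx}\iota_s$. A rank one computation gives $\iota_s^2$ as an explicit product of the numerator factors of $c_\alpha$ and $c_{-\alpha}$. The numerator factors of $c_\alpha$ are exactly what appears in the statement, which is why $c_\alpha$ and $c_\alpha^{-1}$ arise. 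Using the braid relations (up to scalars), I would extend this to $\iota_w$ for $w \in W$.

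Since $I(t) \cong \He(W)$ as a vector space, $I(t)$ has $\Ab$-weights $W t$ with total dimension $|W|$. Moreover $I(t)$ is cyclic, generated by $1 \otimes 1$. By Frobenius reciprocity,
\[
\Hom_\He(I(t), V) = \Hom_\Ab(t, V).
\]
So $I(t)$ is irreducible exactly when every nonzero $\Ab$-eigenvector of weight in $Wt$ inside $I(t)$ generates all of $I(t)$, and also $1 \otimes 1$ lies in the submodule generated by any nonzero vector.

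For the necessity direction there are two cases.

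First, if $c_\alpha(t) = 0$ for some $\alpha$, I reduce to rank one via the parabolic $\He^{\{\alpha\}}$, after conjugating $\alpha$ to a simple root by some $w$. Here I must check that $I(t) \cong I(wt)$ generically, or at least that irreducibility is preserved under this conjugation. In rank one, $\Ind{\Ab}{\He^{\alpha}} t$ has a one-dimensional quotient or sub, namely a twist of the trivial or Steinberg character. By induction in stages, $I(t)$ then has a proper submodule.

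Second, if $\Stab_W(t)$ is strictly larger than the subgroup generated by the reflections $s_\alpha$ with $c_\alpha^{-1}(t) = 0$, I take $w$ in the stabiliser but outside that subgroup. The operator $\iota_w$ is then regular at $t$ and gives a nonzero endomorphism $1 \otimes 1 \mapsto \iota_w(t)$. I would argue this endomorphism is not scalar, for instance because it acts nilpotently or with distinct eigenvalues. This contradicts Schur's lemma and shows $I(t)$ is reducible. To make this work I would follow Kato's own argument and use his specific $\iota_w$ for $w$ in the stabiliser, modulo the reflection subgroup.

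For sufficiency, assume all $c_\alpha(t) \neq 0$. By the formula for $\iota_s^2$, each $\iota_s$ applied between weight spaces $w t$ and $s w t$ is then invertible whenever $swt \neq wt$. Hence any nonzero weight vector can be moved into the $t$-generalised weight space of $I(t)$. That space has dimension $|\Stab_W(t)|$. The stabiliser condition is used to show that the $t$-weight space of any submodule must contain $1 \otimes 1$. Under that condition the generalised $t$-weight space is a regular module for the subalgebra generated by the relevant $T_{s_\alpha}$, the $\Ab$-action there being governed by the Demazure–Lusztig operators, and it has a unique $\Ab$-eigenline spanned by a cyclic vector.

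I expect this last step to be the main obstacle: analysing the non-semisimple $\Ab$-action on the generalised weight space when $t$ is non-regular, especially for the non-reflection part of the stabiliser in the non-simply-connected case. The paper only cites \cite{kato1981irreducibility}, and its proof would simply invoke Kato, so I would follow Kato's argument through this step rather than reproving it.
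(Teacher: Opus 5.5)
The paper gives no argument for this lemma beyond quoting \cite[Theorem 2.2]{kato1981irreducibility} (see also \cite[\S 3]{Solleveld_2021}). Your outline is Kato's own intertwining-operator method, with the same substantive steps deferred to Kato, so it is in line with the paper. The parts you actually carry out are correct: moving $\alpha$ to a simple root; the one-dimensional Steinberg-type quotient in rank one when $c_\alpha(t)=0$, followed by exactness of induction from $\He^{\{\alpha\}}$; and invertibility of $\iota_s$ between the weight spaces of $wt$ and $swt$, read off from the formula for $\iota_s^2$. The conjugation step you flag is immediate: if $I(t)$ is irreducible, the nonzero map $I(wt)\to I(t)$ supplied by Frobenius reciprocity is an isomorphism by a dimension count.

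There are two caveats, and both are exactly the points that must be imported from Kato or handled via Lusztig's reduction theorems. First, your heuristic for sufficiency, via a Hecke subalgebra generated by the $T_{s_\alpha}$, only makes sense when $\Stab_W(t)$ is conjugate to a standard parabolic subgroup. This fails, for example, for $G_2$ when $\Stab_W(t)$ is the $A_2$ subgroup generated by the long-root reflections. Second, in the reducibility argument $w$ should be taken as the representative of its coset that preserves the positive roots of the reflection subgroup; this is what guarantees that $\iota_w\otimes 1$ has nonzero coefficient on $T_w\otimes 1$.
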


Recall  $\He(\R)$ has nonzero parameters $\lambda, \lambda^* \colon R \to \Re \backslash \{0\}$ satisfying (\ref{eq: parameters}) and (\ref{eq: parametersNE}).

\begin{definition} \label{def: qsc}
    A root datum  and its associated affine Hecke algebra  are \emph{quasi-simply-connected} if each primitive summand   is simply-connected or is $ \R_B$ and $\lambda(\alpha) \neq \lambda^*(\alpha)$ for the unique $\alpha \in \Pi$ such that $\alpha^\vee \in 2Y$.
\end{definition}


The following lemma is stated in {\cite[Lemma 4.5]{lusztigUnipotent1}} which attributes the simply-connected case to Steinberg and leaves $B_n$ as an exercise. 

\begin{lemma} \label{lemma: reflectionGroup}
    If $\R$ is quasi-simply-connected  and $t \in T$ then $\Stab_W(t)$ is generated by its reflections.
\end{lemma}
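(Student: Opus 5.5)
The plan is to reduce to a single primitive summand and then handle the two kinds of summand separately. The simply-connected case goes through the affine Weyl group, and the adjoint type $B$ case is a direct computation. Since $W$, $T$ and the $W$-action all split along the decomposition $\R = \R_s \oplus \R_{B,1} \oplus \dots \oplus \R_{B,n}$, we have $\Stab_W(t) = \prod_i \Stab_{W_i}(t_i)$. A product of reflection-generated groups is reflection-generated, so it suffices to treat a primitive summand. Note that $\Stab_W(t)$ does not depend on the parameters, so for $\R_B$ only the lattices matter.

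\emph{Simply-connected case.} I would first write $t = t_u t_r$ with $t_u$ unitary and $t_r = |t| \in \exp(\afr)$. This decomposition is $W$-equivariant and unique, so $\Stab_W(t) = \Stab_W(t_u) \cap \Stab_W(t_r)$. After conjugating by $W$ we may assume $\log t_r$ is dominant. Then $\Stab_W(t_r) = W_J$ is a standard parabolic subgroup, generated by the simple reflections $s_\alpha$, $\alpha \in J$, that fix $\log t_r$. Next write $t_u = \exp(2\pi i y)$ with $y \in \afr$. Since $T = Y \otimes \C^\times$, for $w \in W_J$ we have $w t_u = t_u$ if and only if $wy - y \in Y$. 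Because $wy - y \in \Q R_J^\vee$, this is equivalent to $wy - y \in Y \cap \Q R_J^\vee$.

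The key lattice input is that $Y/\Z R^\vee$ torsion-free gives $Y \cap \Q R^\vee = \Z R^\vee$. The simple coroots of $J$ are part of a $\Z$-basis of $\Z R^\vee$, so it also gives $Y \cap \Q R_J^\vee = \Z R_J^\vee$. Hence $\Stab_{W_J}(t_u)$ is the linear part of the stabiliser of $y$ in the affine Weyl group $W_J \ltimes \Z R_J^\vee$. This is a Coxeter group acting on $\afr$ with the alcove as a fundamental domain. By the standard fact that point stabilisers in such an action are generated by the affine reflections through that point, the stabiliser of $y$ is generated by reflections $s_{\alpha,k}$ with $\alpha(y) = k \in \Z$. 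Their linear parts $s_\alpha$, for $\alpha \in R_J$, fix both $t_u$ and $t_r$. So $\Stab_W(t)$ is generated by such reflections.

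\emph{Type $\R_B$.} Here we may take $X = Y = \Z^n$, $R = \{\pm e_i \pm e_j, \pm e_i\}$, with $W$ the signed permutations acting on $t = (t_1, \dots, t_n) \in (\C^\times)^n$. A signed permutation fixes $t$ exactly when it permutes coordinates with equal values and changes signs only on coordinates with $t_i = t_i^{-1}$, i.e.\ $t_i = \pm 1$. It may also swap $t_i$ and $t_j^{-1}$ when $t_i = t_j^{-1}$. The stabiliser is therefore a product of groups of type $A$ (for each value class paired with its inverse class) and type $B$/$C$ (for the coordinates equal to $1$ and those equal to $-1$). These are generated respectively by the reflections $s_{e_i - e_j}$ or $s_{e_i + e_j}$, and by these together with the sign changes $s_{e_i}$. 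All of these are reflections in $W$.

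I expect the main obstacle to be the lattice step $Y \cap \Q R_J^\vee = \Z R_J^\vee$ together with correctly identifying which translation lattice governs $\Stab(t_u)$, since this is exactly where the simply-connected hypothesis is used. I would also double-check that the conventions for $\R_B$ (which of $X$, $Y$ is $\Z^n$) match the explicit model above.
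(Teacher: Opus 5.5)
Your proof is correct, and it rests on the same two inputs as the paper's. These are the identity $Y\cap\mathbb{Q}R^\vee=\mathbb{Z}R^\vee$ coming from torsion-freeness of $Y/\mathbb{Z}R^\vee$, and the fact that point stabilisers in an affine Weyl group are generated by the affine reflections through the point. In both proofs the adjoint $B_n$ case is a direct signed-permutation computation. Your description of the stabiliser as a product of type $A$ factors (one for each class $\{z,z^{-1}\}$ with $z\neq\pm1$) and type $B$ factors (for the values $\pm1$) is cleaner than the paper's induction on the first coordinate, and it handles the case $z_1=z_j^{-1}$ explicitly.

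The genuine difference is in the simply-connected case. The paper lifts $t$ to $\tau\in\mathfrak{a}_{\mathbb{C}}$ and runs the alcove-gallery argument for $(w,\tau-w\tau)\in W\ltimes\mathbb{Z}R^\vee$ directly. You instead first split $t=t_u|t|$ and conjugate so that $\mathrm{Stab}_W(|t|)=W_J$ is standard parabolic. Only then do you apply the affine argument to a real point $y$ with $t_u=\exp(2\pi i y)$ inside $W_J\ltimes\mathbb{Z}R_J^\vee$. This needs the refined identity $Y\cap\mathbb{Q}R_J^\vee=\mathbb{Z}R_J^\vee$, which you derive correctly.

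Your ordering is what makes the argument airtight for non-unitary $t$. Alcoves live in $\mathfrak{a}$, and for $\tau=a+ib$ an affine Weyl group element fixing $\tau$ fixes $a$ affinely and $b$ linearly. So a gallery through $a$ yields reflections fixing $t_u$, but these need not fix $|t|$ unless the starting alcove is chosen compatibly with $b$. Making that choice is in effect your parabolic reduction, and the paper does not address it. The paper's shortcut is harmless in its application in Theorem~\ref{thm: EM}: there $|t(\alpha)|=1$ for all roots, so $|t|$ is $W$-fixed. Your version, however, proves the lemma as stated.

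Your worry about type $B$ conventions is moot. There $X=Y=\mathbb{Z}^n$ with $R^\vee=\{\pm e_i\pm e_j,\pm 2e_i\}$, and $W$ acts on $T=(\mathbb{C}^\times)^n$ by signed permutations in any case.
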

\begin{proof}
     First suppose $\R$ is simply-connected. Let $w \in \Stab_W(t)$. 
    The exponential map $\exp \colon \afr_\C \to T, \tau \mapsto \exp(2 \pi i \la \tau , - \ra)$ is surjective with kernel $Y$ and $W$-equivariant. Let $\exp(\tau) = t$. Then $\tau - w\tau \in Y$ and as $w \in W$ 
    it follows that $\tau - w\tau \in Y \cap \Q R^\vee = \Z R^\vee$.
    
    Let $W^{a} = W \ltimes \Z R^\vee$ be the affine Weyl group and $(w, \lambda) \in W^a$. This acts on $Y$ via $(w, \lambda)(y) = w(y) + \lambda$. This is a simple transitive action on the set of alcoves of $W^a$ in $\afr$  and one obtains a reduced expression for $(w, \lambda) \in W^a$ by considering the walls $H_{\alpha , n} = \{ y \in \afr : \la \alpha , y \ra = n  \}$ separating the fundamental alcove $A_0$ from $(w,a)A_0$
    \cite[\S1.5]{Humphreys_1990}.  
    
    We have $(w, \tau - w \tau) \in \Stab_{W^a}(\tau)$ so $\tau \in \overline{A} \cap \overline{(w, \tau - w \tau)A}$. Choose a gallery of alcoves connecting $A$ to $(w, \tau - w \tau)A$  all containing $\tau$ in their closure. Adjacent alcoves are separated by walls containing $\tau$ so the associated affine reflections fix $\tau$.
    This gives a decomposition of $(w, \tau - w \tau) = s_{\alpha_1 , n_1} \dots s_{\alpha_k ,  n_k}$ into affine reflections $s_{\alpha , n} = (s_\alpha , n \alpha^\vee)$ each fixing $\tau$. Projecting $W^a \to W$ gives the decomposition $w = s_{\alpha_1} \dots s_{\alpha_k}$ with each $s_{a} (\tau) = \tau + \Z R^\vee$ hence $s_{a} (t) = t$.

    This leaves the case  $\R = \R_B$ is adjoint $B_n$ so $X \cap \Q R = \Z R$ with free basis given by the short roots and $W = S_n \ltimes (\Z / 2)^n$ acts by signed permutations on $T \cong (\C^\times)^n$. 
    Let  $t = (z_1 , \dots , z_n)$ and $w \in \Stab_W(t)$. If $z_1 \not\in \{z_2 , \dots , z_n \}$ then the $S_n$ factor of $w$ fixes the first component of $T$ and we are done by induction. Otherwise $w$ sends $z_i^{\pm 1} $ to $z_1$, in which case precomposing by a signed reflection $s \in \Stab_W(t)$ of components $1$ and $i$ gives $ws \in \Stab_W(t)$ fixing the first component and again we are done by induction.
\end{proof}
\qed

Recall conditions (\ref{eq: parameters}) and (\ref{eq: parametersNE}) for the parameter functions $\lambda, \lambda^*$.

\begin{theorem} \label{thm: EM}
     Suppose $\He$ is quasi-simply-connected and 
     $V \in \Irr(\He)$ is tempered and antitempered. Then $\cc(V)$ uniquely determines $V$.
\end{theorem}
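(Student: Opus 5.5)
We will use the fact that a module which is both tempered and antitempered has all weights with real part in $\afr^- \cap (-\afr^-)$. Since $\IM(V_s) = (\IM V)_{s^{-1}}$, antitemperedness gives $\Lambda(V) \subset -\afr^-$. The cone $\afr^-$ is pointed modulo $\afr^\Delta$, because the coroots in $\Delta^\vee$ are linearly independent. So $\afr^- \cap (-\afr^-) = \afr^\Delta$. Hence every weight $s \in \Omega(V)$ has $|s| \in \exp(\afr^\Delta)$, and $|s|$ is $W$-invariant. The plan is to show that $V$ is the unique irreducible module with central character $\cc(V)$ satisfying this condition, and that it is a specific quotient of $\Ind{\Ab}{\He} t$ for $t \in \cc(V)$.

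\textbf{Reduction.} First we reduce to the case where the real part is trivial. Write $t = |t|\cdot u$ with $|t| \in \exp(\afr^\Delta)$ central in the sense that $\phi_{|t|}$ (with $P=\Delta$) is an automorphism of $\He = \He^\Delta$. Twisting by $\phi_{|t|^{-1}}$ preserves irreducibility, temperedness and antitemperedness. So we may assume that all weights are unitary, i.e. $\cc(V) = W u$ with $|u|=1$.

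\textbf{Main step.} Let $t \in \cc(V)$ be unitary. For each root $\alpha$ we have $|\theta_\alpha(t)| = 1$. Because $q>1$ and the parameters are real and nonzero, the numerator factors of $c_\alpha$ satisfy $|q^{-(\lambda^*+\lambda)/2}| \ne 1$ unless $\lambda^* = -\lambda$, and $|q^{(\lambda^*-\lambda)/2}| \neq 1$ unless $\lambda = \lambda^*$. Condition (\ref{eq: parametersNE}) excludes the first degeneracy in type $B$. The quasi-simply-connected hypothesis (that $\lambda \ne \lambda^*$ where $\alpha^\vee \in 2Y$) excludes the second where it matters. Where $\lambda=\lambda^*$, the factor $\theta_\alpha + 1$ cancels. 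So in all cases $c_\alpha(t) \neq 0$. Moreover $c_\alpha^{-1}(t) = 0$ exactly when $\theta_\alpha(t) = 1$, or $\theta_\alpha(t) = -1$ with $\lambda \neq \lambda^*$. The reflection $s_\alpha$ fixes $t$ exactly when $\theta_\alpha(t) = 1$ or $\alpha^\vee \in 2Y$ and $\theta_\alpha(t) = -1$ (the latter when $\alpha^\vee/2 \in Y$). We must check that every reflection in $\Stab_W(t)$ satisfies $c_\alpha^{-1}(t)=0$. This is the step I expect to be the main obstacle: matching the condition $s_\alpha t = t$ with $\theta_\alpha(t)=\pm1$ against the poles of $c_\alpha$, root datum by root datum. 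We do this by reducing to primitive summands and treating the simply-connected case and the adjoint $B_n$ case with $\lambda \ne \lambda^*$ separately. Lemma~\ref{lemma: reflectionGroup} then says $\Stab_W(t)$ is generated by its reflections. Lemma~\ref{lemma: Kato original} then gives that $\Ind{\Ab}{\He} t$ is irreducible.

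\textbf{Conclusion.} Any $V$ with central character $Wt$ contains a $t$-weight vector. Frobenius reciprocity then gives a nonzero map $\Ind{\Ab}{\He} t \to V$, which is an isomorphism by irreducibility of both modules. Hence $V \cong \Ind{\Ab}{\He} t$ is determined by $\cc(V)$, and undoing the twist from the reduction step gives the general case.
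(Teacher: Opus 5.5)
Your proposal is correct and is essentially the paper's proof. You reduce via Frobenius reciprocity to irreducibility of $\Ind{\Ab}{\He} t$, use $|t(\alpha)|=1$ to get $c_\alpha(t)\neq 0$, invoke Lemma~\ref{lemma: reflectionGroup}, and check $c_\alpha^{-1}(t)=0$ for each reflection in $\Stab_W(t)$. The step you flagged as the obstacle is already settled by your own observation that $s_\alpha t=t$ iff $t(\alpha)=1$, or $t(\alpha)=-1$ with $\alpha^\vee\in 2Y$, combined with $\lambda(\alpha)\neq\lambda^*(\alpha)$ whenever $\alpha^\vee\in 2Y$ in the quasi-simply-connected case.

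Two small remarks. First, the twisting reduction is unnecessary: with the paper's definition, $\afr^-\cap(-\afr^-)=\{0\}$ rather than $\afr^\Delta$, and in any case a real part in $\afr^\Delta$ changes neither $|t(\alpha)|$ nor $\Stab_W(t)$. Second, your claim that every $V$ with central character $W\cdot t$ has a $t$-weight vector needs a line of justification: apply irreducibility to some $t'\in\Omega(V)$, so $V\cong\Ind{\Ab}{\He} t'$, and then use $\Omega(\Ind{\Ab}{\He} t')=W\cdot t$.
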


\begin{proof}
   Choose $t \in \cc(V)$. By Lemma~\ref{inducedWeights} with $P$ empty,
   $ \Omega(\Ind{\Ab}{\He} t) = W \cdot t = \cc(V)$ and by Frobenius reciprocity there are  nonzero maps $\Ind{\Ab}{\He} t \to V$ and $\Ind{\Ab}{\He} t \to V'$ so it suffices to show  $\Ind{\Ab}{\He} t$ is irreducible.  The first condition of Lemma~\ref{lemma: Kato original} is always satisfied as $q \in \Re_{>1}$ and as $V$ is tempered and antitempered $|t(\alpha)|=1$ for all $\alpha \in R$ so zeros of the numerator 
   can occur only when $t(\alpha) = \pm 1$.

    By Lemma \ref{lemma: reflectionGroup}, $\Stab_W(t)$ is generated by simple reflections $s_\alpha$.
   We show $c_\alpha^{-1}(t) = 0$. If $\alpha$ is in a simply-connected component then  as $\lambda(\alpha) = \lambda^*(\alpha) \neq 0$ it suffices to show $t(\alpha) = 1$. 
    As $s_\alpha(t) = t$ it follows that $t(\alpha)^{\la x , \alpha^\vee \ra } = 1$ for all $x \in X$. As $Y / \Z R^\vee$ is torsion-free we can choose a basis of $Y$ containing $\alpha^\vee$ and hence there exists $x \in X$ such that $\la x , \alpha^\vee \ra = 1$.
If $\alpha$ is in an adjoint $B_n$ component then from $t(\alpha)^{\la x , \alpha^\vee \ra } = 1$ we conclude $t(\alpha) = \pm 1$. We may construct $x$ as before except when $\alpha^\vee \in 2 Y$ in which case  $\lambda(\alpha)  \neq \pm \lambda^*(\alpha)$ ensures that $c_\alpha^{-1}(t) = 0$ in either case.
\end{proof}
\qed

\begin{remark}
    A simply-connected affine Hecke algebra with suitable parameters 
    will occur as the affine Hecke algebra of a complex group with simply-connected derived group. For this class of affine Hecke algebras Theorem~\ref{thm: EM} is proved in {\cite[\S 5]{EM}} using the local Langlands correspondence, from which they deduce the following corollary. 
\end{remark}

\begin{corollary}[{\cite[5.5 Theorem]{EM}}] \label{cor: weights}
    Suppose $\He$ is quasi-simply-connected and 
     $V\in \Irr(\He)$. Then $\Omega(V)$ uniquely determines $V$.
\end{corollary}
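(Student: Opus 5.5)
Given $V, V' \in \Irr(\He)$ with $\Omega(V) = \Omega(V')$, we will show $V \cong V'$ by reducing to Theorem~\ref{thm: EM} through the Langlands classification.

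First, write $V = L(P, \pi, t)$ using the construction after the Langlands classification. Choose $s \in \Omega(V)$ with $\lambda = \log|s|$ such that $\nu = \lambda^0$ is maximal. Put $P = P_\lambda$, so that $\log|t| = \nu$. Because $\Omega(V') = \Omega(V)$, the same weight $s$ realises the same maximal $\nu$ for $V'$. The construction therefore produces $V' = L(P, \pi', t')$ with the same $P$ and with $|t| = |t'|$. Moreover, $s$ is a weight of both $\pi \circ \phi_t$ and $\pi' \circ \phi_{t'}$, since both contain the submodule generated by $v_s$. By Lemma~\ref{lemma: t=t'}, applied in the form where the twist is absorbed into a unitary character $\psi_u$ of $\He_P$, we may replace $\pi'$ by a tempered $\pi''$ with $t' = t$. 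The problem is now to show $\pi \circ \phi_t \cong \pi'' \circ \phi_t$, which is equivalent to $\pi \cong \pi''$.

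To compare $\pi$ and $\pi''$, we use Proposition~\ref{prop: Langlands class}. Among the weights of $V$ whose $\afr^P$-part equals $\nu$ and which satisfy $F_{\lambda'} = F_\lambda$, the ones coming from $1 \otimes (\pi \circ \phi_t)$ are separated from all other weights of $M(P,\pi,t)$. Hence the $\Ab$-weights of $V$ lying over $t$ on the $X^P$-part are exactly the weights of the image of $1 \otimes (\pi\circ\phi_t)$ in $V$. That image is nonzero and $\He^P$-stable, so it is a quotient of the irreducible module $\pi\circ\phi_t$ and therefore equals it. This recovers $\Omega(\pi)$ from $\Omega(V)$, so $\Omega(\pi) = \Omega(\pi'')$.

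We then need the statement for tempered modules of $\He_P$. At this point a reduction is necessary, because Theorem~\ref{thm: EM} needs tempered and antitempered, not just tempered. Two routes are available:
\begin{enumerate}
\item Induct on the semisimple rank, applying the whole argument to $\He_P$ when $P \neq \Delta$. This requires checking that $\He_P$ is again quasi-simply-connected. Simply-connectedness and the adjoint-$B$ parameter condition pass to Levi subdata, because $Y_P = Y \cap \Q P^\vee$ keeps $Y_P/\Z P^\vee$ torsion-free.
\item In the base case $P = \Delta$, so $V$ is itself tempered. Here I would apply $\IM$, which inverts weights, and rerun the Langlands argument for $\IM(V)$. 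The data $(P, \pi, t)$ obtained this way consist of a tempered $\pi$ and a strictly positive $t$ unless $V$ is also antitempered. In the remaining tempered and antitempered case, Theorem~\ref{thm: EM} gives the result, because $\cc(V)$ is the $W$-orbit containing $\Omega(V)$.
\end{enumerate}

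The main obstacle is the tempered but non-antitempered case. Passing to $\IM(V) = L(Q, \sigma, r)$ with $Q \neq \Delta$, the inductive hypothesis on $\He_Q$ together with the weight-recovery step above determines $\sigma \circ \phi_r$, hence $\IM(V)$, hence $V$. This forms a double induction on rank, using both $V$ and $\IM(V)$, which terminates in the tempered and antitempered case handled by Theorem~\ref{thm: EM}.
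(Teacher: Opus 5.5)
Your proposal is correct and is essentially the argument the paper relies on: the deduction from Theorem~\ref{thm: EM} in \cite[5.5 Theorem]{EM}, which the paper reproduces in its proof that $\Omega(V)$ determines $\Pi(V)$, proceeds through Langlands data and Lemma~\ref{lemma: t=t'} to match $(P,t)$, recovers $\Omega(\pi)$ via Proposition~\ref{prop: Langlands class}, inducts over proper parabolics, uses $\IM$ for the tempered non-antitempered case, and ends with Theorem~\ref{thm: EM}. Two small repairs are needed: first, $s\in\Omega(\pi\circ\phi_t)\cap\Omega(\pi'\circ\phi_{t'})$ holds not because these modules contain $\He^P v_s$ (they are submodules of it) but by Proposition~\ref{prop: Langlands class}, since the $\afr^P$-component of $\log|s|$ is $\nu$; second, for a Levi of an adjoint $B_n$ summand containing the short simple root, $Y_P/\Z P^\vee$ is not torsion-free, but that piece is again adjoint of type $B$ with the inherited unequal parameters, so quasi-simple-connectedness still passes to $\He_P$.
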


For $V$ an $\He$-module and $s \in \Omega(V)$, let $[s : V] = \dim V_s$ be the dimension of the  generalised $s$-eigenspace.
\begin{lemma} \label{lemma: weights mult.}
    Let $s$ be a weight of the irreducible tempered $\He_P$-module $\pi$. Then 
    \[
    [s \otimes t : L(P, \pi, t)] =  [s \otimes t : M(P, \pi, t)] = [s : \pi]
    \]
    where $(s \otimes t) (x) = s(x_P) t(x^P)$.
\end{lemma}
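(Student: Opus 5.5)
We want $[s\otimes t : L] = [s\otimes t : M] = [s:\pi]$. The plan is to prove two facts: that $M(P,\pi,t)$ has the weight $s\otimes t$ with exactly the multiplicity it has in $\pi\circ\phi_t$, and that none of this weight space is lost in the quotient $M \to L$.

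\textbf{Step 1: multiplicity in $M$.} By Lemma~\ref{inducedWeights}, $M(P,\pi,t)=\bigoplus_{w\in W^P} T_w\otimes(\pi\circ\phi_t)$. The piece $1\otimes(\pi\circ\phi_t)$ is an $\Ab$-submodule, and by Proposition~\ref{prop: Langlands class} the $\Ab$-module $M/(1\otimes(\pi\circ\phi_t))$ has no weight in $\Omega(\pi\circ\phi_t)$. Generalised weight spaces are exact in short exact sequences of finite-dimensional $\Ab$-modules, so
\[
[s\otimes t:M]=[s\otimes t:\pi\circ\phi_t].
\]
The twist $\phi_t$ acts on $\theta_x$ by multiplying by the scalar $t(x^P)$. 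Hence the generalised $s$-eigenspace of the inflated $\pi$ is exactly the generalised $s\otimes t$-eigenspace of $\pi\circ\phi_t$, and $[s\otimes t:\pi\circ\phi_t]=[s:\pi]$.

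\textbf{Step 2: the kernel of $M\to L$ misses this weight.} Let $K$ be the kernel of $M\to L$, so it suffices to show $K_{s\otimes t}=0$ for every $s\in\Omega(\pi)$. We argue by contradiction: suppose $K_{s\otimes t}\neq 0$. Since $K\cap(1\otimes\pi\circ\phi_t)$ contains the generalised weight space $K_{s\otimes t}$ (by Step 1, $M_{s\otimes t}\subset 1\otimes\pi\circ\phi_t$), the intersection $K\cap(1\otimes(\pi\circ\phi_t))$ is a nonzero subspace. We want it to be $\He^P$-stable. This holds if $K$ is an $\He$-submodule and $1\otimes(\pi\circ\phi_t)$ is $\He^P$-stable, and both are true. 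Since $\pi\circ\phi_t$ is irreducible, the intersection is then all of $1\otimes(\pi\circ\phi_t)$. But $1\otimes(\pi\circ\phi_t)$ generates $M$ as an $\He$-module, so $K=M$, contradicting $L\neq 0$.

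\textbf{Expected obstacle.} The main point needing care is the irreducibility of $\pi\circ\phi_t$ as an $\He^P$-module. This follows because inflation along the surjection $\He^P\to\He_P$ and twisting by the automorphism $\phi_t$ both preserve irreducibility. A secondary point is the claim that weight multiplicities are additive in the short exact sequence of $\Ab$-modules used in Step 1, which is standard for commuting operators on finite-dimensional spaces.
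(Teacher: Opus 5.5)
Your proof is correct and essentially the paper's argument. Step 1 is exactly the paper's use of Proposition~\ref{prop: Langlands class} together with $\pi\circ\phi_t(\theta_x)=t(x^P)\pi(\theta_{x_P})$. Step 2 unpacks the paper's Frobenius reciprocity step: the composite $\pi\circ\phi_t\to M(P,\pi,t)\to L(P,\pi,t)$ is nonzero, hence injective by irreducibility, and its kernel is your $K\cap(1\otimes(\pi\circ\phi_t))$. This gives $[s\otimes t:L(P,\pi,t)]\geq[s:\pi]$, which closes the chain.
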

\begin{proof}
    By Frobenius reciprocity there is a nonzero $\He^P$-morphism $\pi \circ \phi_t \to \Res{\He^P}{\He} L(P, \pi, t)$.
  As $\pi \otimes \phi_t$ is irreducible, $[s \otimes t : L(P, \pi, t)] \geq [s \otimes t : \pi \circ \phi_t] = [s  : \pi ]$ so it suffices to show $[s : \pi] \geq [s \otimes t : M(P, \pi, t)] $.
This is an equality by Proposition~\ref{prop: Langlands class} 
and the definition $\pi \circ \phi_t (\theta_x) = t(x^P) \pi (\theta_{x_P})$. 
\end{proof}
\qed

For  an $\He$-module $V$, the $\Ab$-character of $V$ is the element of $\C[T]$
\[
\Theta_\Ab(V) = \sum_{s \in \Omega(V)} [s : V] \, s .
\]

\begin{theorem} \label{Thm: lin indep}
Suppose $\He $ is quasi-simply-connected. The  $\Ab$-characters of simple $\He$-modules are $\C$-linearly independent.    
\end{theorem}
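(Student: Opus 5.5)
Suppose a nontrivial relation $\sum_{V} a_V \Theta_\Ab(V) = 0$ holds in $\C[T]$, with finitely many distinct simple $V$ and all $a_V \neq 0$. I will derive a contradiction by isolating an extremal weight and showing that exactly one module in the relation has it. Since the $s \in T$ are a basis of $\C[T]$, the relation says that $\sum_V a_V [s:V] = 0$ for every $s \in T$.

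Write each $V$ in Langlands form as $V = L(P_V, \pi_V, t_V)$ and set $\nu_V = \log|t_V| \in \afr^{P_V++}$. Among the $V$ in the relation I choose one, $V_0 = L(P,\pi,t)$, for which $\nu_V$ is maximal for the order on $\afr$. The construction of the Langlands data makes $\nu_V$ the maximal value of $\lambda^0$ over $\lambda \in \Lambda(V)$. Now fix a weight $s\otimes t$ of $V_0$ with $s \in \Omega(\pi)$. By Lemma~\ref{lemma: weights mult.}, $[s\otimes t : V_0] = [s:\pi] \neq 0$.

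The key step is to show that if another $V$ in the relation has $s\otimes t$ as a weight, then $V$ and $V_0$ have the same Langlands data $P$ and $\nu$. Let $\lambda = \log|s\otimes t|$, so that $\lambda^0 = \nu$ and $P_\lambda = P$. If $s \otimes t \in \Omega(V)$, then $\nu = \lambda^0 \leq \nu_V$ by the maximality property of $\nu_V$ inside $V$. Maximality of $\nu$ among all the $\nu_V$ forces $\nu_V = \nu$. Then $\lambda$ attains the maximum of $\mu^0$ over $\Lambda(V)$, so the Langlands construction applied to $V$ may start from this very weight. That gives $P_V = P_\lambda = P$, and the $\He^P$-module generated by a weight vector contains some $\pi'\circ\phi_{t'}$ with $s\otimes t$ among its weights. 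By Lemma~\ref{lemma: t=t'} we may rewrite $V = L(P, \pi'', t)$ with $t$ the same as for $V_0$.

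It remains to see that, once $P$ and $t$ are fixed, the relation restricted to the weights $s\otimes t$ reduces to one among tempered $\He_P$-modules. By Lemma~\ref{lemma: weights mult.}, for $V = L(P,\pi'',t)$ we have $[s\otimes t : V] = [s : \pi'']$ for all $s$ with $\log|s| \in \afr_P^-$ in the relevant range. Hence $\sum a_V [s:\pi_V] = 0$, summing over those $V$ with these data, and this holds for all $s \in T_P$. These $\pi_V$ are pairwise non-isomorphic, by the uniqueness part of the Langlands classification applied with fixed $t$. So we obtain a relation among $\Ab_P$-characters of distinct tempered simple $\He_P$-modules.

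To finish, I would use induction on the rank. When $P \neq \Delta$, the algebra $\He_P$ has smaller semisimple rank. I would also check that $\He_P$ is again quasi-simply-connected; this needs care, since $R_P$ in $X_P$ may change the isogeny type, and this is where I expect trouble. When $P = \Delta$, every $V$ involved is tempered and $\nu$ lies in $\afr^\Delta$. Here I would instead apply the Iwahori--Matsumoto involution, which sends tempered modules to antitempered ones, and rerun the argument choosing weights minimal rather than maximal. This reduces to modules that are both tempered and antitempered. For those, Theorem~\ref{thm: EM} shows that $\cc(V)$ determines $V$, so distinct such modules have disjoint weight sets and their characters are trivially independent.

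The main obstacle is justifying the reduction step. First, the inductive hypothesis must apply to $\He_P$, which requires quasi-simply-connectedness to be inherited by $\He_P$. Second, the tempered-only case must be handled without a circular appeal to Corollary~\ref{cor: weights}. If the inheritance fails, I would fall back on Corollary~\ref{cor: weights} directly: I would try to show that the extremal weights $s\otimes t$ with $[s:\pi]$ maximal single out $V_0$, which gives $a_{V_0} = 0$, a contradiction.
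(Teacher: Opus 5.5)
Your argument is essentially the paper's proof. You pick an extremal Langlands parameter, then use Lemma~\ref{lemma: t=t'} and Lemma~\ref{lemma: weights mult.} to cut the relation down to one among distinct tempered $\He_P$-modules; you induct when $P\neq\Delta$, and for $P=\Delta$ you pass through $\IM$ to tempered and antitempered modules, where Theorem~\ref{thm: EM} separates them by central character. The inheritance you were worried about does hold, and the paper uses it without comment. If $Y\cap\Q R^\vee=\Z R^\vee$ then $Y_P=Y\cap\Q P^\vee=\Z P^\vee$. For adjoint $B_n$, $\R_P$ splits into simply-connected type $A$ factors plus an adjoint $B_m$ factor that keeps the short simple root $\alpha$ with $\alpha^\vee\in 2Y_P$ and $\lambda(\alpha)\neq\lambda^*(\alpha)$. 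So your fallback via Corollary~\ref{cor: weights} is unnecessary; it would not work anyway, since that corollary only distinguishes whole weight sets, not individual weights.
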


\begin{proof}
Let
\begin{equation} \label{eq: weights}
    \sum_i c_i \Theta_{\Ab}(L_i) = \sum_i  \sum_{s \in \Omega(L_i)} c_i [s : L_i] s = 0
\end{equation}
be a  finite linear combination of $\Ab$-characters with the $L_i$ distinct simple $\He$-modules. Take $s \in \cup_i \Omega(L_i)$ such that $\lambda = \log |s|$ is maximal. Then  $\lambda^0$ is maximal among $\{ (\lambda')^0 : \lambda' \in \cup_i \Lambda(L_i) \}$ \cite[(2.13)]{RamKril}. 
Let $P = F_\lambda$ and $\nu = \lambda^0$. Fix a $t \in T^P$ with $\log|t| = \nu$ occurring among the Langlands parameters for the $L_i$. Let  $\pi_1, \dots, \pi_k$ be a complete list of distinct irreducible tempered $\He_P$-modules such that $  L(P, \pi_l, t)$  occurs in (\ref{eq: weights}).
By Lemma~\ref{lemma: t=t'} this also accounts for any $L(P, \pi', t')$ with $\log|t'| = \nu$. 
We  show the coefficients of $s_j \otimes t$ for  $s_j \in \Omega(\pi_l)$ come only from $\cup_{l} L(P, \pi_l, t)$.

 Let $\lambda_j = \log|s_j|$. Then as $\pi_l$ is tempered it follows that $(\lambda_j + \nu)^0 = \nu$ and $F_{\lambda_j + \nu} = P$. Suppose $\lambda_j + \nu \in \Lambda(L(P' , \pi', \nu'))$. Take $\lambda' \in \Lambda(L(P' , \pi', t'))$ maximal so $\lambda' \geq \lambda_j + \nu$. By the Langlands classification $P' = P_{\lambda'}$ and $\log|t'| = (\lambda')^0$. Then $(\lambda')^0 \geq (\lambda_j + \nu)^0 = \nu$ hence by maximality $(\lambda')^0 = \nu = \lambda^0$ and  $ P_{\lambda'} = P$.
Hence the coefficients of $s_j \otimes t$ in (\ref{eq: weights}) come only from  $\cup_{l} L(P, \pi_l, t)$. 
By Lemma~\ref{lemma: weights mult.}
\[
\sum_l c_l \Theta_{\Ab_P}(\pi_l) = \sum_l \sum_{s_j \in \Omega(\pi_l)} c_l [s_j  : \pi_l]{s_j} = \sum_l \sum_{s_j \in \Omega(\pi_l)} c_l [s_j \otimes t : L(P, \pi_l , t]({s_j \otimes t})= 0.
\] 

If $P \neq \Delta$ we are done by induction. If $P = \Delta$ then the combination involves only $\Ab$-characters of irreducible tempered $\He$-modules. Apply the Iwahori-Matsumoto involution $\IM$. This preserves irreducibility and $\IM(\pi_s) = (\IM(\pi))_{s^{-1}}$ so 
repeating the same argument we may assume then that all $\pi_l$ are tempered and anti-tempered. 
Furthermore, a simple $\He$-module has all its weights in $\cc(V) = W\cdot s$ and by partitioning different orbits we may suppose all weights in (\ref{eq: weights}) are in the same orbit; equivalently all simple modules $L_i$ have the same central character. By Theorem~\ref{thm: EM} there is a unique such $L_i$.
\end{proof}
\qed

\begin{remark} 
    This result has also  been found by Antor and Okada \cite[Corollary 3.15]{AntorOkada} by 
    first showing this property holds for graded Hecke algebras and then 
    showing this property is preserved under passage to the graded Hecke algebra for quasi-simply-connected affine Hecke algebras.  
\end{remark}

\begin{theorem}
    Let $\He$ be quasi-simply-connected and suppose the irreducible tempered $\He_P$ modules are known for all $P \subset \Delta$. Then there is a finite algorithm for determining Jordan-H\"older factors with multiplicities for finite length $\He$-modules.
\end{theorem}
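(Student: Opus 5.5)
The plan is to read off Jordan--H\"older multiplicities from the $\Ab$-character, using Theorem~\ref{Thm: lin indep} for uniqueness and the peeling argument from its proof for an explicit algorithm. Let $V$ be a finite length $\He$-module. Its semisimplification $\bigoplus_i L_i^{m_i}$ has character
\[
\Theta_\Ab(V)=\sum_i m_i\,\Theta_\Ab(L_i),
\]
since generalised weight space dimensions are additive in short exact sequences. By Theorem~\ref{Thm: lin indep}, the multiplicities $m_i$ are uniquely determined by $\Theta_\Ab(V)$. It therefore suffices to give a finite procedure that writes a given $\Ab$-character, known to be a nonnegative integer combination of simple characters, explicitly in that form. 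The character $\Theta_\Ab(V)$ itself is computable, since $V$ is finite dimensional and $\Ab$ is finitely generated.

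The procedure is a descending induction on weights, mirroring the proof of Theorem~\ref{Thm: lin indep}.
\begin{enumerate}
\item Pick $s\in\Omega(V)$ with $\lambda=\log|s|$ maximal. Set $P=P_\lambda$ and $\nu=\lambda^0$, and let $t\in T^P$ with $\log|t|=\nu$ be the factor with $s=s_P\otimes t$.
\item By the argument in Theorem~\ref{Thm: lin indep}, any weight $s'\otimes t$ with $\log|s'|$ in $\afr_P^-$ can only come from composition factors of the form $L(P,\pi,t)$ with $\pi$ irreducible tempered. Lemma~\ref{lemma: t=t'} lets us normalise all the $t'$ with $\log|t'|=\nu$ to this single $t$.
\item By Lemma~\ref{lemma: weights mult.}, $[s'\otimes t:L(P,\pi,t)]=[s':\pi]$. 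The part of $\Theta_\Ab(V)$ supported on $\{s'\otimes t\}$ therefore equals $\sum_\pi m_\pi\Theta_{\Ab_P}(\pi)$, after dividing by $t$.
\item By hypothesis the irreducible tempered $\He_P$-modules are known, and only finitely many have weights among the finitely many $s'$. Linear independence of their characters (Theorem~\ref{Thm: lin indep} for $\He_P$, which is again quasi-simply-connected as a root subdatum) lets us solve a finite linear system for the $m_\pi$.
\end{enumerate}

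To subtract the contributions $m_\pi\Theta_\Ab(L(P,\pi,t))$, we need the characters of the Langlands quotients, and these we compute recursively. The character of $M(P,\pi,t)$ is explicit from Lemma~\ref{inducedWeights}, namely $\sum_{w\in W^P}w\cdot\Theta(\pi\circ\phi_t)$. We then have
\[
\Theta(L(P,\pi,t))=\Theta(M(P,\pi,t))-\sum_j n_j\,\Theta(L_j),
\]
where the $L_j$ are the remaining composition factors of $M(P,\pi,t)$. These have strictly smaller maximal weights: the weights outside $1\otimes(\pi\circ\phi_t)$ lie strictly below in the $\rho^P$-pairing, by Proposition~\ref{prop: Langlands class} and its proof. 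Applying the same procedure to $\Theta(M(P,\pi,t))$ with its top part removed determines the $n_j$ and $\Theta(L_j)$ by recursion.

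After subtracting, the remaining character of $V$ has strictly fewer weights at the level $\nu$, so we repeat. Termination comes from finiteness: all weights involved lie in finitely many $W$-orbits (the central characters of $V$), so only finitely many values $\lambda^0$ and subsets $P$ occur, and each step strictly decreases a finite set. The main obstacle is making this well-foundedness precise when the recursion on standard modules is interleaved with the recursion on $V$. The ordering to use is $\lambda\ge\mu$ on the finite set $\bigcup W\cdot\log|\cc|$, together with \cite[(2.13)]{RamKril}, which gives $\lambda^0\ge\mu^0$; both recursions strictly decrease the maximal $\lambda^0$ or the number of top weights. A secondary issue is checking that when $P=\Delta$ the tempered data is given directly by the hypothesis, so the base case needs no further recursion.
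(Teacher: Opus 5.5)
There is a genuine gap in how you obtain $\Theta_\Ab(L(P,\pi,t))$. Your recursion feeds $\Theta_\Ab(M(P,\pi,t))-\Theta_\Ab(\pi\circ\phi_t)$ back into the peeling procedure. That remainder is not a nonnegative combination of simple characters. It equals $\bigl(\Theta_\Ab(L(P,\pi,t))-\Theta_\Ab(\pi\circ\phi_t)\bigr)+\sum_j n_j\Theta_\Ab(L_j)$, and the Langlands quotient usually has many weights outside $1\otimes(\pi\circ\phi_t)$.

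Proposition~\ref{prop: Langlands class} separates the top weights from everything else. It does not separate the lower weights of $L(P,\pi,t)$ from those of the $L_j$. Your steps 2--3 need the input to be a genuine module character, so that a top-level slice is a sum of tempered characters coming from Langlands tops of composition factors. On this remainder they do not apply, and nothing determines the $n_j$.

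The rank one example in \S\ref{sec: example} shows the problem. For $\He(\PGL_2)$ and $M=\pi_q$, the character data $\Theta_\Ab(M)=q+q^{-1}$ and $\Theta_\Ab(\St)=q^{-1}$ cannot tell you whether the remainder $q^{-1}$ belongs to $L$ or to $\St$. It belongs to $L$ for $\pi_\tau$ with generic $|\tau|>1$, where $M$ is irreducible; it belongs to $\St$ for $\tau=q$.

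Your procedure would assign it to $\St$ because a tempered module with that weight exists. That is right here, but it is an unjustified greedy rule: it subtracts tempered characters whenever a slice looks like them, whether or not those modules are composition factors of $M$. It gets stuck when no match exists, and it cannot split a slice that is partly from $L$ and partly from some $L_j$. In general the $n_j$ are Kazhdan--Lusztig type data and are not determined by characters of standard modules together with linear independence.

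The missing step is to compute $L(P,\pi,t)$ as a module rather than through characters, which is what the paper does. Since $\pi$ is known, $M(P,\pi,t)=\bigoplus_{w\in W^P}T_w\otimes(\pi\circ\phi_t)$ is an explicitly computable finite-dimensional module. By Proposition~\ref{prop: Langlands class} and irreducibility of $\pi\circ\phi_t$, a submodule is proper exactly when its generalised weight spaces at the top weights $\Omega(\pi\circ\phi_t)$ vanish. So you can quotient out every $\He v$, with $v$ a generalised weight vector, that misses the top. This is finite linear algebra, and it leaves $L(P,\pi,t)$, whose character you then read off.

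With this replacing your recursion on $\Theta_\Ab(M(P,\pi,t))$, the rest of your argument is the paper's proof: top-level slices via Lemma~\ref{lemma: weights mult.}, uniqueness via Theorem~\ref{Thm: lin indep}, then subtract and repeat within a fixed central character. The well-foundedness issue you flag also disappears, since there is no longer an interleaved recursion.
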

\begin{proof}
    This is essentially the proof of Theorem~\ref{Thm: lin indep}; we continue with the same notation. Fix a finite length $\He$-module $V$ with character $\Theta(V)$. By first partitioning generalised weight spaces into $W$-orbits we may assume there is a central character $W \cdot s$.
    
    There are a finite set of irreducible tempered $\He_P$-modules $\pi_1, \dots, \pi_k$ with central character $W \cdot s|_{X_P}$. In each case construct $M(P, \pi_l, t)$. This is finite dimensional and has maximal weight $s_i$. 
    To construct $L(P, \pi_l, t)$ we quotient out all submodules not containing the highest weight vector $v_{s_l}$ as follows.  For each generalised weight vector $v$ in $M(P, \pi_l, t)$ construct the submodule $\He v$. If it does not contain  $v_{s_l}$ then  quotient this out.  This leaves the  simple quotients $L(P, \pi_l, t)$. Construct their characters $\Theta(L(P, \pi_l, t))$.
    
    By Theorem~\ref{Thm: lin indep} there is a unique $\C$-linear combination of the $\Theta(L(P, \pi_l, t))$ matching the $s_j \otimes t$, $s_j \in \cup \Omega(\pi_l)$ coefficients in  $\Theta(V)$, hence in particular we can solve the linear equation to obtain multiplicities for each Jordan-H\"older factor $L(P, \pi_l, t)$.
    Subtract these from $\Theta(V)$ and repeat.
\end{proof}
\qed


    


\subsection{Characters of simple modules of $\He(\GL_n)$}
As an example we compute $\Ab$-characters for simple modules of $\He(\GL_n)$.
The root datum  $\R(\GL_n)$ has $X = \Z^n$ with standard basis $e_i$, $R= \{e_i - e_j : i \neq j\}$, and $\Delta = \{e_1 - e_2, \dots , e_{n-1} - e_n \}$. The finite Weyl group is  $W = S_n$. The simple reflections in $\widetilde{W}$ are conjugate so an affine Hecke algebra on $\R(\GL_n)$ must have equal parameters. Write $\He_n = \He(\GL_n,q)$.

The irreducible representations of $\GL_n(k)$ with $k$ nonarchimedean are classified in terms of multisegments \cite{zelevinsky80}. Simple modules for the Iwahori-Hecke algebra $\He_I(\GL_n)$ can be computed by taking the $I$-invariants of these irreducible representations; this is carried out in \cite{Rogawski1985} to classify simple modules for $\He_n$ with parameter $q $ a prime power using multisegments. 
 Conversely, by the classification of \cite{BK93} every Bernstein block for $\GL_n(k)$ is equivalent to modules over $\He_n$ for some $n, q$ so from these algebras one recovers the representation theory of $\GL_n(k)$.
 
We follow \cite[\S2.3]{Solleveld_2021} for the classification of simple $\He_n$-modules. This is essentially the Langlands classification of \S 2.1, valid for any $q \in \C^\times$ which is not a root of unity. For convenience fix $q>1$.

Let $t_n = (q^{(1-n)/2} , q^{(3-n)/2} , \dots , q^{(n-1)/2} ) \in Y \otimes_\Z \C^\times = T$.  There is a one dimensional  Steinberg module $\St_n$ of $\He_n$   
\[
\St_n(T_w \theta_x) = (-1)^{l(w)}t_n(x)
\]
with weight $t_n$ evaluating to $q^{-1}$ on every simply root, hence $\St_n$ is tempered.
For $z \in \C^\times$ let $t^n_z =(z, \dots, z) \in T^\Delta$. The twisted $\He_n$-module
\[
\St_n \otimes z = \St_n \circ \phi_{t^n_z} 
\]
 corresponds to an individual segment in the Zelevinsky classification. 

Let $\vec{n} = (n_1 , \dots , n_d)$ be an integer partition of $n$ and $\vec{z} = (z_1 , \dots ,z_d)$. This gives parabolic subalgebra $\He_n^P = \otimes_i \He_{n_i}$ of $\He_n$ and irreducible $\He^P$-module corresponding to a multisegment
\[
\boxtimes_i (\St_{n_i} \otimes z_i) = (\boxtimes_i \St_{n_i}) \circ \phi_{t_{\vec{z}}}.
\]
with $\boxtimes_i \St_{n_i}$ an irreducible $\He^P$-module and ${t_{\vec{z}}} = \boxtimes_i t^{n_i}_{z_i} \in T^P$. (Note this is not identically the Langlands classification; $\boxtimes_i \St_{n_i}$ is only tempered for $d=1$ and
we have not imposed $|{t_{\vec{z}}}| \in T^{P++}$.)
This has weight $t(\vec{n} , \vec{z}) = {\boxtimes}_i (t_{n_i} \otimes t^{n_i}_{z_i}) = (\boxtimes_i t_{n_i}) \otimes (\boxtimes_i t^{n_i}_{z_i})$.
Form the induced module
\[
M(\vec{n}, \vec{z}) = \Ind{\He_n^P}{\He_n} \big(\boxtimes_i (\St_{n_i} \otimes z_i) \big).
\]
This has a unique simple quotient, denoted $L(\vec{n}, \vec{z}) $. By the Zelevinsky classification all simple $\He_n$-modules occur this way. The subquotients of $M(\vec{n}, \vec{z})$ are not known in general, and relate to Kazhdan-Lusztig theory \cite{Zelevinskii1981padicAO}.

Recall $W^P$ denotes the set of minimal length coset representatives for $W / W_P = S_n / (S_{n_1} \times \dots \times S_{n_d})$. From Lemma~\ref{inducedWeights} 
\[
\Theta (M(\vec{n}, \vec{z})) = \sum_{w \in W^P}{w \cdot t(\vec{n} , \vec{z})}.
\]

\section{Isogenies and central morphisms}
\begin{definition} \label{def: centralMorphism}
    A \emph{central morphism} of based root data $ (X_1,R_1,Y_1,R_1^\vee , \Delta_1) \to (X_2,R_2,Y_2,R_2^\vee , \Delta_2)$ is a group homomorphism $f \colon X_1 \to X_2$ 
    such that the restriction of $f$ to  $ R_1$   defines an isomorphism of based root systems $(\Re R_1 , R_1 , \Re R_1^\vee , R_1^\vee , \Delta_1) \to (\Re R_2 , R_2 , \Re R_2^\vee , R_2^\vee , \Delta_2)$.
    
    An \emph{isogeny} of based root data is a central morphism such that $f \colon X_1 \to X_2$ is injective with finite cokernel.
\end{definition}


\begin{definition}
    Let $ \R_1 \to \R_2$ be a central morphism. There is a corresponding
  homomorphism $ \He(\R_1) \to \He(\R_2)$ of affine Hecke algebras provided the parameters are compatible, which we also call a \emph{central morphism}, sending $\theta_x T_w \mapsto \theta_{f(x)} T_{w}$. Similarly for isogenies. We write $X_2 / X_1$ for the cokernel $X_2 / f(X_1)$. 
\end{definition}
If $f$ is injective on $X$ then this is an inclusion.  
This is used in \cite{Reeder2002} to determine Langlands parameters for simple modules of certain affine Hecke algebras with equal parameters by reducing to the simply-connected case, using Clifford theory of \cite{RamRam2003}. This approach applies also to affine Hecke algebras with not equal parameters (see Remark \ref{rmk: RR}), however for clarity we will avoid direct appeals to Clifford theory.

The following is well known.
\begin{lemma} \label{lemma: existence of SC1}
    Let $\R = (X, R, Y, R^\vee, \Delta)$ be a based root datum. There exists a simply-connected root datum $\R^\scon$ and an isogeny $\R \to \R^\scon$.   
\end{lemma}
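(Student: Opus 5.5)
We construct $\R^\scon$ by enlarging $X$ so that the coroot lattice becomes saturated in the new cocharacter lattice. Set $Y_1 = Y$ and $Y' = Y \cap \Q R^\vee$, which is a finite-index overlattice of $\Z R^\vee$ because $\Z R^\vee$ has full rank in $\Q R^\vee \cap Y$. The problem is to replace $Y$ by a lattice $Y^\scon \subset Y$ of finite index with $Y^\scon \cap \Q R^\vee = \Z R^\vee$ and $R^\vee \subset Y^\scon$. Then $X^\scon = \Hom(Y^\scon, \Z) \supset X$ contains $X$ with finite cokernel $\Hom$-dual to $Y/Y^\scon$. Restriction to $Y^\scon$ gives the inclusion $f \colon X \to X^\scon$. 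It is injective, and its cokernel is finite because $Y^\scon$ has finite index in $Y$.

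To build $Y^\scon$, first choose a complement. The quotient $Y / Y'$ is torsion-free, since $Y'$ is saturated in $Y$. Hence $Y' $ is a direct summand, and we write $Y = Y' \oplus Z$. Set $Y^\scon = \Z R^\vee \oplus Z$. This has finite index in $Y$, equal to $[Y' : \Z R^\vee]$. It also satisfies $Y^\scon \cap \Q R^\vee = \Z R^\vee$, because $Z \cap \Q R^\vee = 0$. Therefore $Y^\scon / \Z R^\vee \cong Z$ is torsion-free, which is exactly the simply-connected condition.

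Next we check that $\R^\scon = (X^\scon, R, Y^\scon, R^\vee, \Delta)$ is a root datum, identifying $R \subset X \subset X^\scon$ via $f$. The pairing between $X^\scon$ and $Y^\scon$ is the tautological one. It restricts to the original pairing on $X \times Y^\scon$, so $\la \alpha, \beta^\vee \ra$ is unchanged for all roots and coroots. The reflections $s_\alpha(x) = x - \la x, \alpha^\vee\ra \alpha$ preserve $X^\scon$, since $\la x, \alpha^\vee \ra \in \Z$ for $x \in X^\scon$ because $\alpha^\vee \in Y^\scon$. Dually, $s_{\alpha^\vee}$ preserves $Y^\scon = \Z R^\vee \oplus Z$. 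For this, note that $\la \alpha, z \ra \in \Z$ for $z \in Z \subset Y$, so $s_{\alpha^\vee}(z) = z - \la \alpha, z\ra \alpha^\vee \in Y^\scon$.

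Finally, $f$ restricted to $R$ is the identity on $(\Re R, R, \Re R^\vee, R^\vee, \Delta)$, so it is an isomorphism of based root systems, and $f$ is an isogeny as required. The main step needing care is the splitting $Y = Y' \oplus Z$. It relies only on $Y'$ being saturated, which holds because $Y'$ is the intersection of $Y$ with a rational subspace. Everything else is formal duality of lattices.
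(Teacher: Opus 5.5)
Your proof is correct, but it uses a different complement from the paper's. The paper takes the canonical lattice $Y^\scon = (Y\cap (R\otimes\Q)^\perp)\oplus\Z R^\vee$. Here no choice is needed, and the roots vanish on the first summand (so $W$ fixes it pointwise). Consequently $\R^\scon$ splits as a torus datum plus a semisimple simply-connected datum.

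You instead split off an arbitrary complement $Z$ of the saturation $Y\cap\Q R^\vee$. This costs a non-canonical choice and the standard fact that saturated sublattices are direct summands. Also, the roots need not vanish on $Z$, so your $\R^\scon$ need not split.

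In exchange, your isogeny has the smallest possible degree, $[Y\cap\Q R^\vee:\Z R^\vee]=|(Y/\Z R^\vee)_{\mathrm{tors}}|$. Indeed, any finite-index $Y''\subset Y$ in which $\Z R^\vee$ is saturated satisfies
\[
[Y:Y'']\ge[Y''+(Y\cap\Q R^\vee):Y'']=[Y\cap\Q R^\vee:\Z R^\vee].
\]
In particular your construction returns $\R$ itself when $\R$ is already simply-connected. The paper's recipe does not: for $\R(\GL_n)$, which is simply-connected in the paper's sense, it produces $Y^\scon=\Z(1,\dots,1)\oplus\Z R^\vee$, of index $n$ in $\Z^n$, i.e.\ a degree-$n$ isogeny onto $\R(\GL_1\times\SL_n)$.

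You also check the root datum axioms explicitly, which the paper leaves implicit. That check is automatic: $W$ acts trivially on $Y/\Z R^\vee$, so every lattice between $\Z R^\vee$ and $Y$ is $W$-stable. The dual lattice $X^\scon$ is then $W$-stable as well.
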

\begin{proof}
    Let $Y' = Y \cap (R \otimes \Q)^\perp \oplus \Z R^\vee$ and $X' = \Hom_{\Z} (Y' , \Z)$. 
    The inclusion $Y' \to Y$ gives a restriction map $f \colon \Hom_\Z(Y, \Z) = X \to X'$.  This induces the identity upon extending scalars to $\Re$, hence $f$ has finite kernel and cokernel. The only finite subgroup of $X$ is trivial so $f$ is injective with finite cokernel. 
    Identify $R$ with its image under $f$.
    Then $\R^\scon = ( X', R , Y' , R^\vee , \Delta)$ is simply-connected and $\R \to \R^\scon$ is an isogeny.
\end{proof} 


\begin{example}
    Let $e_1, \dots , e_n$ be the standard basis of $\Z^n$. There is a perfect pairing $\Z^n \times \Z^n \to \Z$. Let $e^\vee_1, \dots, e^\vee_n$ be the dual basis. This descends to a perfect pairing between $X = \{ (a_1 , \dots , a_n) \in \Z^n : \sum a_i = 0 \}$ and $Y = \Z^n / \Z(1 , \dots , 1)$.
    
    The roots of type $A_{n-1}$ are $R = \{e_i - e_j : i \neq j \}$, 
    with base $\Delta = \{e_1 - e_2, e_2 - e_3, \dots , e_{n-1} - e_n \}$. Let
    $$\begin{array}{c l c }
        \R(\GL_n) &=& ( \Z^n , R , \Z^n ,R^\vee , \Delta), \\
        \R(\SL_n) &=& ( X , R , Y ,R^\vee , \Delta), \\
       \R(\PGL_n) &=& ( Y , R^\vee , X ,R , \Delta^\vee).
    \end{array}$$
There are   central morphisms $\R(\PGL_n) \to \R(\GL_n) \to \R(\SL_n)$ from the inclusion of $X \to \Z^n$ and the projection $\Z^n \to Y$ respectively. The composition $\R(\PGL_n)  \to \R(\SL_n)$ is an isogeny. 
\end{example}

\begin{lemma} \label{lemma: existence of SC}
    Let $\He(\R)$ be an affine Hecke algebra with possibly unequal parameters. Then there exists a quasi-simply-connected affine Hecke algebra $\He^\scon$ and an isogeny $\He(\R) \to \He^\scon$. 
\end{lemma}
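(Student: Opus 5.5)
We start from Lemma~\ref{lemma: existence of SC1}, which gives an isogeny $f \colon \R \to \R^\scon$ with $\R^\scon = (X', R, Y', R^\vee, \Delta)$ simply-connected. The natural candidate is $\He^\scon = \He(\R^\scon)$ with the same parameters. Two things need checking: that the parameters are compatible (so $f$ induces a homomorphism of affine Hecke algebras), and that the result is quasi-simply-connected. By the decomposition into primitive summands we may assume $\R$ is primitive. Passing to $\R^\scon$ does not change the root system, so $\R^\scon$ has the same primitive type.

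\textbf{Simply-connected case.} Suppose $\R^\scon$ is simply-connected and for every $\alpha \in R$ we have $\alpha^\vee \notin 2Y'$. Then condition (\ref{eq: parameters}) forces $\lambda = \lambda^*$ on $\He^\scon$. We must check that the given parameters on $\He(\R)$ already satisfy $\lambda = \lambda^*$. Since $Y' \subset Y$, we have $\alpha^\vee \in 2Y'$ implies $\alpha^\vee \in 2Y$, but not conversely. The problematic case is therefore $\alpha^\vee \in 2Y$ with $\alpha^\vee \notin 2Y'$ and $\lambda(\alpha) \neq \lambda^*(\alpha)$. This happens exactly when $\R$ contains an adjoint-type $B_n$ summand with unequal parameters at the short simple root (the $\R_{B,i}$ summands), including $n=1$.

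\textbf{Unequal-parameter case.} For summands with $\lambda(\alpha) \neq \lambda^*(\alpha)$ we must not pass all the way to the simply-connected cover. Instead we build $\R^\scon$ summand by summand. On the summand $\R_s$, and on those $\R_{B,i}$ with $\lambda = \lambda^*$, we take the simply-connected cover from Lemma~\ref{lemma: existence of SC1}. On the remaining $\R_{B,i}$ we take $\R_B$, the adjoint $B_n$ datum. For these we need an isogeny $\R_{B,i} \to \R_B$. Since $\R_{B,i}$ is $B_n$ with $\alpha^\vee \in 2Y$ for the short simple root, its derived part is adjoint. Concretely, take $Y' = (Y \cap (R\otimes\Q)^\perp) \oplus Y_{\mathrm{ad}}$ with $Y_{\mathrm{ad}} = \Z^n \supset \Z R^\vee$ (the lattice dual to $\Z R$), and $X' = \Hom(Y', \Z)$. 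Then the restriction $X \to X'$ is injective with finite cokernel, exactly as in the proof of Lemma~\ref{lemma: existence of SC1}. The main thing to check is that $Y' \subset Y$, that is, $Y_{\mathrm{ad}} \subset Y$. This should follow from $\alpha^\vee \in 2Y$: the $\Z^n$ basis vectors $e_i^\vee$ are half the short coroots $\alpha^\vee = 2e_i^\vee$, and the $\widetilde W$-orbit argument puts all of them in $Y$ once one does. The summand then has $X' \cap \Q R = \Z R$, so it is $\R_B$, and the parameter condition of Definition~\ref{def: qsc} holds by assumption.

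\textbf{Compatibility of parameters.} The map $\theta_x T_w \mapsto \theta_{f(x)} T_w$ must respect the Bernstein relations. These involve $\lambda(\alpha), \lambda^*(\alpha)$ and the pairings $\langle x, \alpha^\vee\rangle$, which are preserved because $f$ is the dual of an inclusion $Y' \subset Y$ fixing $R^\vee$. The parameters must also be constant on $\widetilde W'$-orbits. This holds because the orbits of roots under $W \ltimes X'$ refine into conditions already imposed: $\lambda \neq \lambda^*$ is only allowed where $\alpha^\vee \in 2Y'$, which we arranged.

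\textbf{Expected main obstacle.} The hardest step is the $B_n$ bookkeeping: verifying $Y_{\mathrm{ad}} \subset Y$ from $\alpha^\vee \in 2Y$, and checking that the (quadratic) relation parameters transfer correctly, including condition (\ref{eq: parametersNE}) and the degenerate case $n=1$. I would handle these by explicit computation in coordinates $X \subset \Z^n$.
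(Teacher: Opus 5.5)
Your proposal is correct and follows the paper's proof: reduce to primitive summands, take the simply-connected cover of Lemma~\ref{lemma: existence of SC1} wherever $\lambda = \lambda^*$ (where the larger extended affine Weyl group imposes no new parameter constraint), and leave the adjoint $B_n$ summands with $\lambda(\alpha) \neq \lambda^*(\alpha)$ as they are, since these are quasi-simply-connected by definition. The one simplification is that each $\R_{B,i}$ in the primitive decomposition is already the adjoint datum $\R_B$, so your lattice $Y'$ equals $Y$ and the isogeny on those summands is the identity; the step you flag as the main obstacle is therefore immediate.
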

\begin{proof}
Recall we may suppose $\R$ is primitive. The only obstruction to the isogeny of affine Hecke algebras is from the parameters, which must remain constant on orbits of $\Delta$ under the extended affine Weyl group of the quasi-simply-connected affine Hecke algebra. If $\R = \R^\scon$ there is nothing to prove
and if $\R = \R_s$ then the conjugacy classes are independent of the isogeny class. This leaves the case $\R=\R_B$   adjoint of type $B$. The parameters may cause an obstruction only if $\lambda(\alpha) \neq \lambda^*(\alpha)$ for the short simple root, in which case $\He(\R)$ is by definition quasi-simply-connected.
\end{proof}
\qed

\begin{theorem} \label{Thm: ResInd}
Let $\He_1 \to \He_2$ be a central morphism. For $g \in \He_2$  invertible  let $\pi^g (h) = \pi (g^{-1} h g) $. Choose a set of representatives for $X_2 / X_1$ in $X_2$ arbitrarily. Then 
\begin{enumerate}
    \item $\Res{\He_1}{\He_2} \Ind{\He_1}{\He_2} \pi = \bigoplus_{g \in X_2/X_1} \pi^g$
   with  all $\Res{\Ab_1}{\He_1}\pi^g = \Res{\Ab_1}{\He_1}\pi$. Furthermore if $\pi$ is irreducible  then so is each summand and if $\pi$ is semisimple then so are  $\Res{\He_1}{\He_2} \Ind{\He_1}{\He_2} \pi$.
   \item If $\He_1 \to \He_2$ is an isogeny and $\pi$ is semisimple then so is  $\Ind{\He_1}{\He_2} \pi$.
\end{enumerate}
\end{theorem}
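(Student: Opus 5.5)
The plan is to exhibit $\He_2$ as a free right $\He_1$-module with basis $\{\theta_g : g \in X_2/X_1\}$, using the fixed representatives, and then read off both parts from this decomposition.

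For the basis, I would use the linear decomposition $\He_2 = \He(W)\otimes \C[X_2]$ together with the Bernstein relation
\[
\theta_x T_{s} - T_s \theta_{s(x)} \in \C[X]
\]
with coefficients built from the roots. Since a central morphism is an isomorphism on the root systems, $W$ and the Bernstein relations are common to $\He_1$ and $\He_2$. The key point is that $w(x) - x \in \Z R \subset f(X_1)$ for every $x \in X_2$, so $W$ acts trivially on $X_2/X_1$. From this I would derive
\[
T_w \theta_g \in \theta_g \He_1 \quad\text{and}\quad \theta_g^{-1} \He_1 \theta_g = \He_1 .
\]
Concretely, $\theta_g^{-1} T_s \theta_g = T_s + (\text{element of } \C[X_1])$, because $\theta_{-g}\theta_{s(g)} = \theta_{s(g)-g}$ lies in the image of $\C[X_1]$. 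When $f$ is injective I would then conclude $\He_2 = \bigoplus_g \theta_g \He_1$ as right $\He_1$-modules, and
\[
\Ind{\He_1}{\He_2}\pi = \bigoplus_g \theta_g \otimes \pi .
\]
The general central case, with $f$ not injective, needs care. There I would replace $\He_1$ by its image, which is a quotient through which $\pi$ may not factor. I would treat this by noting that the statement concerns $\He_2\otimes_{\He_1}\pi$, and that the same computation gives a surjection $\bigoplus_g \theta_g\otimes\pi \to \Ind{}{}\pi$. I expect this degenerate case to be a technical nuisance rather than the main point.

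For part (1), take $h \in \He_1$. Then $h\,\theta_g\otimes v = \theta_g \otimes (\theta_g^{-1} h\theta_g) v$, so each $\theta_g\otimes\pi$ is $\He_1$-stable and isomorphic to $\pi$ twisted by conjugation by $\theta_g$, which is $\pi^{g}$ up to the convention $g\leftrightarrow g^{-1}$. The algebra $\Ab_1$ commutes with $\theta_g$, so $\Res{\Ab_1}{\He_1}\pi^g=\Res{\Ab_1}{\He_1}\pi$. Conjugation by $\theta_g$ is an automorphism of $\He_1$, so twisting preserves irreducibility and semisimplicity. It follows that the restriction is semisimple whenever $\pi$ is.

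For part (2), I would use the standard finite-index argument. For an isogeny, $X_2/X_1$ is a finite abelian group $\Gamma$, and $\He_2$ is $\Gamma$-graded with $\He_1$ in degree zero. Suppose $U\subset \Ind{}{}\pi=V$ is an $\He_2$-submodule. By part (1), $V$ is semisimple over $\He_1$, so there is an $\He_1$-projection $p\colon V\to U$. Averaging gives
\[
\tilde p = \frac{1}{|\Gamma|}\sum_{g\in\Gamma} \theta_g\, p\, \theta_g^{-1} .
\]
I expect the main obstacle to be checking that $\tilde p$ is $\He_2$-linear and independent of the choice of representatives. Independence holds because changing a representative multiplies $\theta_g$ by an element of $\Ab_1$, and $p$ commutes with $\Ab_1$. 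For linearity, conjugating by $\theta_{g'}$ permutes the summands, and $\He_2$ is generated by $\He_1$ and the $\theta_g$. Finally $\tilde p$ is the identity on $U$, so it is an $\He_2$-projection onto $U$, which gives semisimplicity.
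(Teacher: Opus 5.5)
Your argument is correct whenever $f$ is injective (in particular for isogenies, which is all part 2 needs), and it follows the paper's proof. Both rest on the decomposition $\He_2 \otimes_{\He_1} \pi = \bigoplus_g \theta_g \otimes \pi$ coming from $W$ acting trivially on $X_2/f(X_1)$. Both identify each summand with the twist of $\pi$ by conjugation by $\theta_g$. Both then average an $\He_1$-linear retraction, $\tilde p = \frac{1}{|\Gamma|}\sum_g \theta_g\, p\, \theta_g^{-1}$. You average over the whole group at once and check that the result does not depend on the representatives; the paper first reduces to cyclic $X_2/X_1$, which your version shows is not needed.

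There is one small slip. The Bernstein relation gives $\theta_g^{-1} T_s \theta_g = \theta_{s(g)-g} T_s + c$ with $c \in \C[\Z R]$, not $T_s + c$. You only use $\theta_g^{-1} T_s \theta_g \in \He_1$, so nothing breaks.

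Your plan for non-injective $f$ does not work, and no plan can, because the statement is false there as written.

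\begin{itemize}
\item Let $k \in \ker f$. Then $\langle k, \alpha^\vee \rangle = \langle f(k), f(\alpha)^\vee \rangle = 0$ for every root $\alpha$ of the source datum; this compatibility is forced by $f$ respecting the Bernstein relations.
\item Hence $\theta_k$ is central in $\He_1$ and maps to $1$ in $\He_2$.
\item Suppose $\theta_k$ acts on an irreducible $\pi$ by a scalar $c \neq 1$. Then $h \otimes v = c^{-1}(h \otimes \theta_k v) = c^{-1}(h \otimes v)$, which forces $\He_2 \otimes_{\He_1} \pi = 0$. On the other hand $\bigoplus_g \pi^g \neq 0$.
\item The simplest instance is the map from the $\mathrm{GL}_1$ datum to the zero datum, $\Z \to 0$, with $\pi$ the character of $\C[\Z]$ sending $\theta_x \mapsto 2^x$.
\end{itemize}

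So your surjection $\bigoplus_g \theta_g \otimes \pi \to \He_2 \otimes_{\He_1} \pi$ cannot be upgraded to an isomorphism.

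The correct treatment is as follows. Put $I = \ker(\He_1 \to \He_2)$. Then $\He_2 \otimes_{\He_1} \pi = \He_2 \otimes_{f(\He_1)} (\pi/I\pi)$, and $f(\He_1) \cong \He_1/I \subset \He_2$ is the injective case. Hence part 1 holds with $\pi/I\pi$ in place of $\pi$, and it holds as stated exactly when $I\pi = 0$. When $I\pi \neq 0$, even the twists $\pi^g(h) = \pi(g^{-1}hg)$ are not well defined, since $g^{-1}hg$ only determines an element of $\He_1$ modulo $I$.

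The paper's own proof makes the same tacit assumption when it writes $\He_2 \otimes_{\He_1} \pi = \C[X_2/X_1] \otimes_\C \pi$. You should therefore state the hypothesis explicitly rather than treat this case as a technical nuisance.
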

\begin{proof}
  Consider the linear decomposition \[\Ind{\He_1}{\He_2} \pi = \He_2 \otimes_{\He_1} \pi = \C[X_2/X_1] \otimes_{\C} \pi = \bigoplus_{g \in X_2/X_1} g \otimes \pi = \bigoplus_{g \in X_2/X_1} \pi^g.\] 
  Each $g \otimes \pi$ is isomorphic to $\pi$ when restricted to $\Ab_1$. The action of $W$ on $X_2$ descends trivially to $X_2 / X_1$ and hence each $g \otimes \pi$ is also stable under $\He(W)$, thus each $g \otimes \pi = \pi^g$ is an $\He_1$-module. Furthermore  conjugation by $\theta_g$ is an automorphism of $\He_1$ so if $\pi$ is irreducible or semisimple then so is each $\pi^g$, hence $\Res{\He_1}{\He_2} \Ind{\He_1}{\He_2} \pi$ is semisimple.
  
  Let $U$ be an $\He_2$-submodule of $\Ind{\He_1}{\He_2} \pi$ and by induction we may suppose $X_2 / X_1$ is cyclic and generated by $g \in X_2$. There exists an $\He_1$-equivariant retraction $p \colon \Ind{\He_1}{\He_2} \pi \to U$. Let 
   \[
   \Tilde{p}(m) = \frac{1}{n} \sum_x \theta_{x} p (\theta_{x^{-1}} m) \qquad m \in \Ind{\He_1}{\He_2} V.
   \]
   As $U$ is stable under $\theta_x \in \He_2$ it follows that $\Tilde{p}(u) = u$ for $u \in U$.
   It remains to show $\Tilde{p}$ is $\He_2$-equivariant. Clearly $\Tilde{p}(\theta_x m) = \theta_x \Tilde{p}(m)$ and as conjugation by $\theta_g$ is an automorphism of $\He_1$ we have
   \[
   \Tilde{p}(h m) = \frac{1}{n} \sum_x \theta_{x} p (\theta_{x^{-1}} h \theta_x \theta_{x^{-1}} m)
   =\frac{1}{n} \sum_x \theta_{x} \theta_{x^{-1}} h \theta_x p (\theta_{x^{-1}} h \theta_x \theta_{x^{-1}} m) = h \Tilde{p}(m) .
   \]
\end{proof}
\qed

For $g \in X_2 / X_1$ let $\He_{2,g} = \{ \theta_x T_w : x + X_1  = g, w \in W \}$.
\begin{proposition} \label{prop: gradedAlgebra}
   Let $\He_1 \to \He_2$ be a central morphism. There is an $X_2 / X_1$-grading on $\He_2$, and $\He_{2,0} = \He_1.$
\end{proposition}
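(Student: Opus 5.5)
We define $\He_{2,g}$ as the linear span of the $\theta_x T_w$ with $x + f(X_1) = g$, and we must show three things: $\He_2 = \bigoplus_{g} \He_{2,g}$ as vector spaces, $\He_{2,g}\He_{2,h} \subset \He_{2,g+h}$, and $\He_{2,0}$ is the image of $\He_1$. The direct sum decomposition follows from the linear decomposition $\He_2 = \C[X_2] \otimes_\C \He(W)$. The basis $\{\theta_x T_w\}$ is partitioned according to the class of $x$ in $X_2/X_1$, so $\He_2$ is the direct sum of the $\He_{2,g}$. If $f$ is not injective, $\He_1$ means its image. The image of $\He_1$ is spanned by $\theta_{f(x)}T_w$, which is exactly $\He_{2,0}$; when $f$ is injective, $\He_1 \cong \He_{2,0}$ because the basis maps bijectively.

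For multiplicativity, write $\theta_x T_w \cdot \theta_y T_v = \theta_x (T_w \theta_y) T_v$. It is enough to show that $T_w \theta_y$ lies in the span of the $\theta_{y'} T_u$ with $y' \equiv y \bmod f(X_1)$. Since $T_w T_v$ lies in $\He(W)$, the product then lands in $\He_{2,[x]+[y]}$. By induction on $\ell(w)$ we may reduce to $w = s_\alpha$ simple. Here we use the Bernstein–Lusztig relation
\[
T_{s} \theta_y - \theta_{s(y)} T_{s} = \big((q^{\lambda(\alpha)}-1) + \theta_{-\alpha}(q^{(\lambda(\alpha)+\lambda^*(\alpha))/2} - q^{(\lambda(\alpha)-\lambda^*(\alpha))/2})\big)\frac{\theta_y - \theta_{s(y)}}{1-\theta_{-2\alpha}},
\]
together with its simpler form when $\alpha^\vee\notin 2Y$. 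We use whatever normalisation the paper's presentation fixes.

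The key observation is that $y - s(y) = \la y,\alpha^\vee\ra\alpha$, and $\alpha$ lies in $\Z R_2 = f(\Z R_1) \subset f(X_1)$ because $f$ restricts to an isomorphism of root systems. So $s(y) \equiv y$ modulo $f(X_1)$. Expanding the quotient $(\theta_y - \theta_{s(y)})/(1-\theta_{-\alpha})$, or $/(1-\theta_{-2\alpha})$, gives a finite geometric sum of $\theta_{y-k\alpha}$. All of these terms, together with the extra factor $\theta_{-\alpha}$, stay in the coset $y + f(X_1)$. This is the same fact used in the proof of Theorem~\ref{Thm: ResInd}, that $W$ acts trivially on $X_2/X_1$.

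The main obstacle is bookkeeping rather than substance. We must check the Bernstein relation in the $\alpha^\vee\in 2Y$ case, where the expansion is in powers of $\theta_{-2\alpha}$, and confirm that every monomial produced lies in $\theta_{y+\Z\alpha}$. Once this is done, the grading and the identification $\He_{2,0} = \He_1$ follow immediately.
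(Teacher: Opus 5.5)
Your proof is correct and follows essentially the same route as the paper. In both, the key point is that the Bernstein--Lusztig relation for $T_s\theta_y$ produces only monomials $\theta_{y'}$ with $y' \in y + \Z\alpha \subset y + f(X_1)$, because the roots lie in the image of $X_1$. The differences are cosmetic: the paper gets the vector-space decomposition by applying Theorem~\ref{Thm: ResInd} to the regular representation rather than reading it off the basis $\{\theta_x T_w\}$, and it leaves implicit the reduction of $\theta_xT_w\cdot\theta_yT_v$ to $T_w\theta_y$ and the induction on $\ell(w)$, which you spell out.
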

\begin{proof}
    Applying Theorem~\ref{Thm: ResInd} to the regular representation of $\He_1$ gives 
    \[
    \He_2 = \He_2 \otimes_{\He_1} \He_1 = \bigoplus_{g \in X_2/X_1} g \otimes \He_1 =\bigoplus_{g \in X_2/X_1} \He_{2,g}
    \]
    and by definition $\He_{2,0} = \He_1$.

   We now show $\He_{2,g} \He_{2,g'} \subset \He_{2,g+g'}$.  By \cite[Proposition 3.6]{Lusztig}, if $\alpha^\vee \not\in 2Y_2$ then
    \begin{equation*} \label{eq: graded alg}
        T_s \theta_x  = \theta_{s(x)} T_s + 
        (q^{\lambda(\alpha)} - 1) \frac{\theta_x - \theta_{s(x)}}{1 - \theta_{- \alpha}} = \theta_{s(x)} T_s + 
        (q^{\lambda(\alpha)} - 1)  \theta_x \frac{1 - \theta_{-\alpha}^{\la x, \alpha^\vee\ra}}{1 - \theta_{- \alpha}}.
    \end{equation*}
   As $W$ acts trivially on $X_2 / \Z R$ and $\Z R \subset X_1$ it follows 
    that $T_s \theta_x \in \He_{2, x + X_1} $ so  $\He_{2,g} \He_{2,g'} \subset \He_{2,g+g'}$. 
    If $\alpha^\vee \in 2Y$ then
    \begin{align*} \label{eq: graded alg adj}
        T_s \theta_x  &= \theta_{s(x)} T_s + 
        \left((q^{\lambda(\alpha)} - 1) + \theta_{- \alpha}\left(q^{\frac{\lambda(\alpha) + \lambda^*(\alpha)}{2}} - q^{\frac{\lambda(\alpha) - \lambda^*(\alpha)}{2}}\right) \right) \frac{\theta_x - \theta_{s(x)}}{1 - \theta_{- 2\alpha}}  
    \end{align*}
    and the same argument applies.
\end{proof}
\qed
\begin{corollary} \label{cor: free}
    Let $\He_1 \to \He_2$ be a central morphism. There is a decomposition $\He_2 = \bigoplus_{g \in X_2/X_1} \He_{2,g}$ of $(\He_1, \He_1)$-bimodules, and $\He_{2,g}  = \He_1$ 
    as a left and right $\He_1$-module. In particular $\He_2$ is finite and free as a left and right $\He_1$-module.
\end{corollary}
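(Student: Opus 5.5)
We prove Corollary~\ref{cor: free} by combining the graded decomposition of Proposition~\ref{prop: gradedAlgebra} with explicit bases. Choose, as in Theorem~\ref{Thm: ResInd}, a representative $x_g \in X_2$ for each coset $g \in X_2/X_1$.

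\textbf{Step 1: bimodule decomposition.} By Proposition~\ref{prop: gradedAlgebra} we have $\He_2 = \bigoplus_g \He_{2,g}$ as vector spaces, $\He_{2,0} = \He_1$, and $\He_{2,g}\He_{2,g'} \subset \He_{2,g+g'}$. Taking $g=0$ or $g'=0$ shows $\He_1 \He_{2,g} \subset \He_{2,g}$ and $\He_{2,g}\He_1 \subset \He_{2,g}$. Hence each $\He_{2,g}$ is an $(\He_1,\He_1)$-sub-bimodule, and the decomposition is one of bimodules.

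\textbf{Step 2: right freeness of rank one.} We show right multiplication gives an isomorphism of right $\He_1$-modules
\[
\He_1 \to \He_{2,g}, \quad h \mapsto \theta_{x_g} h .
\]
The image lies in $\He_{2,g}$ by the grading. The right-hand side has basis $\{\theta_x T_w : x \in x_g + f(X_1), w\in W\}$, and $\theta_{x_g}\theta_{f(y)} T_w = \theta_{x_g + f(y)} T_w$. So the map sends the basis $\{\theta_y T_w\}$ of $\He_1$ bijectively onto this basis, provided $f$ is injective. If $f$ is not injective, the image of $\He_1$ in $\He_2$ should be used in place of $\He_1$; this is the reading implicit in ``$\He_{2,0} = \He_1$'', and the statement is really about the image.

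\textbf{Step 3: left freeness of rank one.} This is the main obstacle, since $\theta_{x_g}$ does not commute with the $T_w$. Consider the map $h \mapsto h\theta_{x_g}$. Its image lies in $\He_{2,g}$ by the grading. To show it is bijective, note that conjugation by $\theta_{x_g}$ preserves $\He_1$: this is exactly the fact used in the proof of Theorem~\ref{Thm: ResInd}, and it follows from the grading together with invertibility of $\theta_{x_g}$ and $\theta_{x_g}^{-1} = \theta_{-x_g} \in \He_{2,-g}$. Then
\[
h\theta_{x_g} = \theta_{x_g}\,(\theta_{x_g}^{-1} h \theta_{x_g}),
\]
and $h \mapsto \theta_{x_g}^{-1} h\theta_{x_g}$ is an automorphism of $\He_1$. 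So this map is the composite of that automorphism with the bijection of Step~2, hence bijective. Equivalently, $\He_{2,g} = \theta_{x_g}\He_1 = \He_1\theta_{x_g}$, which gives the required isomorphisms.

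\textbf{Conclusion.} The elements $\theta_{x_g}$, $g \in X_2/X_1$, form a basis of $\He_2$ both as a left and as a right $\He_1$-module. When $X_2/X_1$ is finite (for example for an isogeny), $\He_2$ is therefore finite and free on both sides, of rank $|X_2/X_1|$.
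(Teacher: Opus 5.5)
Your proof is correct and follows essentially the paper's route. The corollary comes from the grading of Proposition~\ref{prop: gradedAlgebra}, which itself rests on $\He_2 = \bigoplus_g g \otimes \He_1$ from Theorem~\ref{Thm: ResInd}, together with the fact used there that conjugation by $\theta_{x_g}$ is an automorphism of $\He_1$; you derive this cleanly from the grading and the Bernstein basis, and you rightly note that the statement tacitly needs $f$ injective and $X_2/X_1$ finite for ``finite'' (your only slip is calling $h \mapsto \theta_{x_g}h$ ``right multiplication'' when it is left multiplication, i.e. a right-module map).
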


\begin{remark} \label{rmk: RR}
    It is shown in \cite{RamRam2003} that 
    if $f \colon \He_1 \to \He_2$ is an isogeny then $\He_2$ acquires an action of $X_2 / X_1$ for which $\He_1$ is the set of fixed points. In our setting this is made apparent by decomposing $\He_2 =\bigoplus_{g \in X_2/X_1} \He_{2,g}$ and having $x \in X_2 / X_1$ scale each block by a suitable root of unity. 
\end{remark}    

\begin{remark} \label{remark: graded module}
   The linear decomposition $\Ind{\He_1}{\He_2} \pi = \bigoplus_{g \in X_2/X_1} \pi^g$ makes $\Ind{\He_1}{\He_2}V$ a graded module for the graded algebra $\He_2$.
\end{remark}

Next we give an analogue of Lemma~\ref{inducedWeights} for induction along isogenies. This and the following corollary essentially follow from the observation 
\[
\Theta_{\Ab_2}(\Ind{\He_1}{\He_2} \pi) = \Theta_{\Ab_2}(\Res{\Ab_2}{\He_2}\Ind{\He_1}{\He_2} \pi) = \Theta_{\Ab_2}(\Ind{\Ab_1}{\Ab_2} \Res{\Ab_1}{\He_1} \pi).
\]
so $\Ind{\He_1}{\He_2}$ and $\Res{\He_1}{\He_2}$ coincide with 
 usual induction and restriction on character rings. 

\begin{lemma} \label{lemma: inducedWeights} 
   Let $\He_1 \to \He_2$ be an isogeny. The $\Ab_2$-weights of $ \Ind{\He_1}{\He_2} \pi$ are
\[
\Omega_{\Ab_2}( \Ind{\He_1}{\He_2} \pi) = \{ s \colon X_2 \to \C^* : s|_{X_1} \in \Omega(\pi) \} 
\]
and the $\Ab_2$-character  is
\[
\Theta_{\Ab_2} (\Ind{\He_1}{\He_2} \pi) = \sum_{s' \in \Omega(\pi)} \sum_{ s|_{X_1} = s' } [s |_{X_1} : \pi] s .
\]
\end{lemma}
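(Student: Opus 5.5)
The plan is to compute everything after restricting to $\Ab_2$, following the observation just before the statement. By Theorem~\ref{Thm: ResInd} and Corollary~\ref{cor: free}, the linear decomposition $\Ind{\He_1}{\He_2}\pi = \bigoplus_{g \in X_2/X_1} \theta_g \otimes \pi$ holds. As a space, $\He_2 \otimes_{\He_1} \pi = \Ab_2 \otimes_{\Ab_1} \pi$. This uses $\He_2 = \Ab_2 \otimes_{\Ab_1} \He_1$, which follows from the grading $\He_{2,g} = \theta_g \He_1$.

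The first step is to identify the $\Ab_2$-action on this space. Since $\Ab_2$ is commutative and $\Ab_2 \otimes_{\Ab_1}\He_1 \otimes_{\He_1}\pi = \Ab_2 \otimes_{\Ab_1} \Res{\Ab_1}{\He_1}\pi$ as left $\Ab_2$-modules, we get
\[
\Res{\Ab_2}{\He_2}\Ind{\He_1}{\He_2}\pi \cong \Ind{\Ab_1}{\Ab_2}\Res{\Ab_1}{\He_1}\pi.
\]
This reduces the lemma to a statement about commutative group algebras $\C[X_1] \subset \C[X_2]$ with $X_2/X_1$ finite.

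The second step uses exactness. $\Ab_2$ is free over $\Ab_1$ by Corollary~\ref{cor: free} restricted to $W=1$, so $\Ind{\Ab_1}{\Ab_2}$ is exact. Filter $\Res{\Ab_1}{\He_1}\pi$ by generalised weight spaces and then refine to a composition series by one-dimensional characters $s'$. The character of the induced module is then the sum over these one-dimensional pieces, each counted $[s':\pi]$ times. It therefore suffices to compute $\Ind{\Ab_1}{\Ab_2} \C_{s'} = \C[X_2]\otimes_{\C[X_1]} \C_{s'}$ for a single character $s'\colon X_1 \to \C^\times$.

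The third step is this one-dimensional computation, which I expect to be the only real content. The module $\C[X_2]\otimes_{\C[X_1]}\C_{s'}$ has dimension $|X_2/X_1|$. Any extension $s$ of $s'$ to $X_2$ gives a nonzero $\Ab_2$-map onto $\C_s$ by Frobenius reciprocity (or via the explicit functional $\theta_x \otimes 1 \mapsto s(x)$). Since $\C^\times$ is divisible, $s'$ extends to $X_2$. The extensions form a torsor under $\Hom(X_2/X_1,\C^\times)$, which has exactly $|X_2/X_1|$ elements because the group is finite abelian. The resulting $|X_2/X_1|$ distinct characters give a surjection onto $\bigoplus_s \C_s$, and a dimension count shows the module is semisimple with each extension occurring exactly once.

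Summing over the composition factors then gives the stated formula for $\Theta_{\Ab_2}$, with multiplicity $[s|_{X_1}:\pi]$ for each $s$ extending a weight of $\pi$. The weight set is the support of this character, which is exactly $\{s : s|_{X_1}\in\Omega(\pi)\}$.
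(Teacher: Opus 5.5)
Your proof is correct, but it takes a different route from the paper's.

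\textbf{The paper's route.} The paper works directly with the linear decomposition $\bigoplus_i g^i \otimes \pi$ for a cyclic quotient $X_2/X_1=\langle g\rangle$ of order $n$. It writes down explicit $\theta_g$-eigenvectors $\sum_{i=0}^{n-1}\lambda^{-i} g^i\otimes v_s$, one for each $\lambda$ with $\lambda^n=s(g^n)$. It then handles a general finite quotient by inducting over a decomposition into commuting cyclic factors.

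\textbf{Your route.} You make precise the remark preceding the lemma by identifying $\Res{\Ab_2}{\He_2}\Ind{\He_1}{\He_2}\pi$ with $\Ind{\Ab_1}{\Ab_2}\Res{\Ab_1}{\He_1}\pi$. This needs only the Bernstein basis, not the multiplicativity of the grading. You then use freeness of $\C[X_2]$ over $\C[X_1]$, hence exactness of induction, to reduce to a single character $s'$. You finish by counting: the $|X_2/X_1|$ extensions of $s'$ form a torsor under $\Hom(X_2/X_1,\C^\times)$, and they give pairwise distinct one-dimensional quotients of an $|X_2/X_1|$-dimensional module.

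\textbf{What each approach buys.} Your argument needs no cyclic decomposition and no induction. Because it passes through a composition series, the multiplicity $[s|_{X_1}:\pi]$ comes out automatically, and this multiplicity is a \emph{generalised} weight-space dimension. The paper's construction uses genuine weight vectors $v_s$. As written, it therefore most directly covers the case where $\Ab_1$ acts semisimply on $\pi$, and it needs an extra filtration argument otherwise. The paper's approach, in exchange, produces explicit weight vectors and identifies the new eigenvalues as the $n$-th roots of $s(g^n)$. That is what one actually uses in hands-on computations such as the rank-one example of \S\ref{sec: example}.

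\textbf{One implicit step.} Surjectivity onto $\bigoplus_s \C_s$ holds because the $\C_s$ are pairwise non-isomorphic simple modules. Equivalently, it follows from linear independence of distinct characters of $X_2$.
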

\begin{proof}
       As  $X_2 / X_1$ is finite abelian it decomposes into  finite cyclic groups of order $n_i$ generated by $g_i \in X_2$. 
    Any  $s' \colon X_2 \to \C^*$ restricting to $s \in \Omega(\pi)$ is determined by the values $s'(g_i)$ which must satisfy $s'(g_i)^{n_i} = s(g_i^{n_i})$, and we claim these are all realised as weights of $\Ab_2$.
    
     If $X_2 / X_1$ is cyclic of order $n$ generated by $g \in X_2$ then $g$ acts on the blocks $\bigoplus_{1 \leq i \leq n} g^i \otimes \pi$ by 
    \[
    \Ind{\He_1}{\He_2} \pi (g) (g^i \otimes v) = 
    \begin{cases}
        g^{i+1} \otimes v & i \neq n, \\
         1 \otimes \pi(g^n) v & i = n.
    \end{cases}
    \]
   Let $\lambda^n = s(g^n)$ and $v_s$ be an $s$-weight vector for $\Ab_1$. Then $g$ acts on $\sum_{0}^{n-1} \lambda^{-i} g^i \otimes v_s$ with eigenvalue $\lambda$ and the $n$ eigenvectors constructed this way are a basis of  $\bigoplus_{1 \leq i \leq n} g^i \otimes \pi$.  
   The general case now follows by induction as the $g_i$ commute.
\end{proof}
\qed

 \begin{corollary} \label{cor: inducedCharacters}
  Let $\He_1 \to \He_2$ be an isogeny. Let $T_1 = \Hom(X_1, \C^\times)$ and $T_2 = \Hom(X_2, \C^\times)$. Define 
 \begin{align*}
 \Ind{\He_1}{\He_2} \colon \C[T_1] \to \C[T_2], \: \: &s' \mapsto \sum_{ s|_{X_1} = s' }  s \\   
 \Res{\He_1}{\He_2} \colon \C[T_2] \to \C[T_1], \: \: &s  \mapsto s|_{X_1}. 
 \end{align*}
  These are linear maps satisfying
$\Theta_{\Ab_2} (\Ind{\He_1}{\He_2} \pi) = \Ind{\He_1}{\He_2} (\Theta_{\Ab_1} ( \pi)) $ and $
   \Theta_{\Ab_1} (\Res{\He_1}{\He_2} \pi) = \Res{\He_1}{\He_2} (\Theta_{\Ab_2} ( \pi))   $. 
 \end{corollary}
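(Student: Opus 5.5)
Both maps are linear by definition, since they are specified on the basis $T_1$ (resp. $T_2$) of the group algebra and extended linearly. So the content is the two character identities. For each, I would compute the restriction to the relevant commutative subalgebra and compare generalised weight spaces.

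For induction, I would invoke Lemma~\ref{lemma: inducedWeights} directly. It gives
\[
\Theta_{\Ab_2}(\Ind{\He_1}{\He_2}\pi)=\sum_{s'\in\Omega(\pi)}\sum_{s|_{X_1}=s'}[s':\pi]\,s .
\]
Regrouping, this is $\sum_{s'\in\Omega(\pi)}[s':\pi]\Ind{\He_1}{\He_2}(s')$, which is $\Ind{\He_1}{\He_2}(\Theta_{\Ab_1}(\pi))$ by linearity. The one point to check is that in Lemma~\ref{lemma: inducedWeights} each lift $s$ occurs with multiplicity exactly $[s':\pi]$ and not more. I would verify this by a dimension count. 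The module $\Ind{\He_1}{\He_2}\pi$ has dimension $|X_2/X_1|\dim\pi$. Each $s'\in\Omega(\pi)$ has exactly $|X_2/X_1|$ extensions $s$, because $\Hom(X_2/X_1,\C^\times)$ acts simply transitively on the extensions and $\C^\times$ is divisible. The block $\bigoplus_g g\otimes\pi_{s'}$ is $\Ab_2$-stable, and on it every element of $\Ab_2$ acts with generalised eigenvalues lying over $s'$. The eigenvector construction in the proof of Lemma~\ref{lemma: inducedWeights}, run on a generalised-eigenvector filtration of $\pi_{s'}$, shows that all $|X_2/X_1|$ lifts occur with equal multiplicity. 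Counting dimensions then forces that multiplicity to be $\dim\pi_{s'}=[s':\pi]$.

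For restriction, let $\pi$ be an $\He_2$-module. I would work only with its restriction to $\Ab_2$ and then to $\Ab_1=\C[f(X_1)]\subset\Ab_2$. Write $\pi=\bigoplus_{s\in\Omega(\pi)}\pi_s$ for the $\Ab_2$-decomposition. On $\pi_s$, each $\theta_x$ with $x\in X_1$ acts with the single generalised eigenvalue $s(f(x))$. So $\pi_s$ lies in the generalised $s|_{X_1}$-weight space for $\Ab_1$. These subspaces sum to all of $\pi$, so the generalised $\Ab_1$-weight space for $s'$ is exactly $\bigoplus_{s|_{X_1}=s'}\pi_s$. Taking dimensions gives
\[
[s':\Res{\He_1}{\He_2}\pi]=\sum_{s|_{X_1}=s'}[s:\pi],
\]
which is the coefficient of $s'$ in $\Res{\He_1}{\He_2}(\Theta_{\Ab_2}(\pi))$.

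The main obstacle is the multiplicity claim in the induction case, namely that the generalised weight spaces of the induced module split evenly among the lifts. If the filtration argument is awkward, I would instead use the restriction formula just proved, together with Theorem~\ref{Thm: ResInd}(1). Restricting the induced module gives $\bigoplus_g\pi^g$, and each summand has the same $\Ab_1$-character as $\pi$. Hence the sum over lifts $s$ of $s'$ of the multiplicities of $s$ in the induced module equals $|X_2/X_1|[s':\pi]$. Translating by a lift of a character $\chi$ of $X_2/X_1$ permutes these lifts. It also preserves the multiplicities, since twisting by $\chi$, namely $\theta_x\mapsto\chi(x)\theta_x$, is an automorphism of $\He_2$ fixing $\He_1$, and the induced module is isomorphic to its twist via $g\otimes v\mapsto\chi(g)^{-1}g\otimes v$. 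So all lifts have equal multiplicity $[s':\pi]$, which settles the claim.
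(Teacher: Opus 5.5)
Your proposal is correct and follows essentially the paper's route: the induction identity is Lemma~\ref{lemma: inducedWeights} regrouped by linearity, and the restriction identity is the direct comparison of generalised $\Ab_2$- and $\Ab_1$-weight spaces. Together these amount to the paper's observation that $\Theta_{\Ab_2}(\Ind{\He_1}{\He_2}\pi)=\Theta_{\Ab_2}(\Ind{\Ab_1}{\Ab_2}\Res{\Ab_1}{\He_1}\pi)$. Your additional check that every lift $s$ of $s'$ occurs with multiplicity exactly $[s':\pi]$, by filtering $\pi_{s'}$ or by twisting with characters of $X_2/X_1$, is a sound supplement, since the paper's proof of Lemma~\ref{lemma: inducedWeights} only constructs eigenvectors from genuine weight vectors.
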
 

\begin{corollary} \label{cor: preservation properties}
    Induction and restriction along isogenies preserve the properties of being  tempered, antitempered, and discrete series.
\end{corollary}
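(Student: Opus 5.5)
Tempered, antitempered and discrete series are all conditions on the real parts $\Lambda(V)=\{\log|s| : s\in\Omega(V)\}$ of the weights. So the plan is to show that induction and restriction along an isogeny $f\colon \He_1\to\He_2$ do not change these real parts, once we identify $\afr_1$ and $\afr_2$. By Corollary~\ref{cor: inducedCharacters}, the weights of $\Res{\He_1}{\He_2}\pi$ are the restrictions $s|_{X_1}$ for $s\in\Omega(\pi)$. By Lemma~\ref{lemma: inducedWeights}, the weights of $\Ind{\He_1}{\He_2}\pi$ are all extensions $s\colon X_2\to\C^\times$ of weights $s'\in\Omega(\pi)$.

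\emph{Identifying the real parts.} Since $f$ is injective with finite cokernel, it induces an isomorphism $X_1\otimes\Re\cong X_2\otimes\Re$, and dually $\afr_1=Y_1\otimes\Re\cong Y_2\otimes\Re=\afr_2$. This isomorphism matches the coroots and the bases $\Delta_1$ and $\Delta_2$, because $f$ restricts to an isomorphism of based root systems. Hence $\afr_1^-$ corresponds to $\afr_2^-$. For $s\in T_2$, the function $\log|s|\in\Hom(X_2,\Re)$ is determined by its restriction to $X_1$: if $nx\in X_1$ then $|s(x)|=|s(nx)|^{1/n}$. So $\log|s|$ and $\log|s|_{X_1}|$ correspond under this isomorphism. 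Moreover, every extension $s$ of a given $s'$ has the same modulus $|s|$, since extensions differ by roots of unity on the finite group $X_2/X_1$. It follows that $\Lambda(\Res{\He_1}{\He_2}\pi)=\Lambda(\pi)$ and $\Lambda(\Ind{\He_1}{\He_2}\pi)=\Lambda(\pi)$. In particular, the temperedness condition $\Lambda\subset\afr^-$ is preserved in both directions.

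\emph{Antitempered.} Both $\IM$ and $f$ send $\theta_x\mapsto\theta_{f(x)}^{\pm1}$ and $T_w\mapsto T_w$, so $\IM$ commutes with the isogeny. Therefore $\IM(\Ind{\He_1}{\He_2}\pi)\cong\Ind{\He_1}{\He_2}\IM(\pi)$, and similarly for restriction. Alternatively, we can argue directly with weights: antitempered means $\Lambda\subset-\afr^-$, using $\IM(V_s)=(\IM V)_{s^{-1}}$, and this is again a condition on $\Lambda$, which is preserved by the previous paragraph.

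\emph{Discrete series.} Discrete series means $\Lambda(V)$ lies in the interior of the cone $\afr^-$, with strict inequalities, and in particular forces the central part $\afr^\Delta$ to be zero. Again this is a condition only on $\Lambda$ relative to the coroot cone, and the isomorphism $\afr_1\cong\afr_2$ identifies these cones, including the central components $\afr^\Delta$. The one point needing care is that the identification respects the decomposition $\afr=\afr_\Delta\oplus\afr^\Delta$. This holds because the isomorphism fixes the coroots and preserves the orthogonal complement of $R$, which is matched by $f$. I expect this bookkeeping to be the only mild obstacle; once it is in place, all three properties are statements about $\Lambda$, which is invariant under both functors.
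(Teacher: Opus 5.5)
Your proposal is correct and follows the paper's argument: the paper gives no separate proof, deriving the corollary directly from Lemma~\ref{lemma: inducedWeights} and Corollary~\ref{cor: inducedCharacters}. Those show that the weights of induced (resp.\ restricted) modules are exactly the extensions (resp.\ restrictions) of weights, so $\Lambda$ is unchanged under the identification $\afr_1\cong\afr_2$. Your write-up makes explicit the bookkeeping the paper leaves implicit, namely that extensions have equal moduli and that $f^*$ identifies the coroot cones and central parts.
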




Let $\He \to \He^\scon$ be the isogeny given by Lemma~\ref{lemma: existence of SC}.  The set $\Irr(\He)$ decomposes into conjugacy class packets $\Pi(V) = \{ V^g  : g \in X^\scon / X \}$ where $X^\scon$ is the character lattice of the isogenous quasi-simply-connected affine Hecke algebra. The number of isomorphism classes occurring in $\Pi(V)$ clearly divides $|X^\scon / X|$, and is not independent of $V$ (see \S4.1).


\begin{theorem}
    Let $V \in \Irr{\He}$. Then $\Omega(V)$ uniquely determines $\Pi(V)$.
\end{theorem}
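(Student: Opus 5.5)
Let $V, V' \in \Irr(\He)$ with $\Omega(V) = \Omega(V')$, and let $\He \to \He^\scon$ be the isogeny of Lemma~\ref{lemma: existence of SC}. The plan is to pass to the quasi-simply-connected algebra $\He^\scon$, where Corollary~\ref{cor: weights} applies, and then come back down using Theorem~\ref{Thm: ResInd}. The goal is to show $V' \cong V^g$ for some $g \in X^\scon/X$.

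First, Lemma~\ref{lemma: inducedWeights} shows that the $\Ab^\scon$-weights of $\Ind{\He}{\He^\scon} V$ are exactly the characters $s$ of $X^\scon$ with $s|_X \in \Omega(V)$. So $\Ind{\He}{\He^\scon} V$ and $\Ind{\He}{\He^\scon} V'$ have the same weight set. By Theorem~\ref{Thm: ResInd}(2) both induced modules are semisimple. Choose a simple summand $U$ of $\Ind{\He}{\He^\scon} V$. Then $\Omega(U)$ is a subset of the common weight set, and we want some simple summand of $\Ind{\He}{\He^\scon} V'$ with weight set equal to $\Omega(U)$.

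This is the main obstacle: equality of the total weight sets does not immediately match summands. I would handle it through restriction. By Theorem~\ref{Thm: ResInd}(1),
\[
\Res{\He}{\He^\scon}\Ind{\He}{\He^\scon} V = \bigoplus_g V^g .
\]
The summand $U$ is a quotient of $\Ind{\He}{\He^\scon} V$, so $\Res{\He}{\He^\scon} U$ is a sum of some of the $V^g$. All $V^g$ have the same $\Ab$-weights as $V$. Hence every weight $s$ of $U$ has $s|_X \in \Omega(V) = \Omega(V')$.

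Now use Frobenius reciprocity. Some $V^g$ embeds in $\Res{\He}{\He^\scon} U$, and $V \cong (V^g)^{g^{-1}}$ is again in the packet. Since $\Ind{\He}{\He^\scon} V \cong \Ind{\He}{\He^\scon} V^g$ (twisting by $\theta_g$ inside $\He^\scon$), we get $\Hom_\He(V, \Res{\He}{\He^\scon} U) \neq 0$. So every simple $U$ occurring in $\Ind{\He}{\He^\scon} V$ satisfies $\Res{\He}{\He^\scon} U \cong \bigoplus_{g \in S} V^g$ for some nonempty set $S$. Its weights are then all extensions to $X^\scon$ of weights of $V$ that are realised by $U$.

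To match summands, I would compare characters rather than weight sets. Take any weight $s$ of $U$. It is a weight of $\Ind{\He}{\He^\scon} V'$, hence of some simple summand $U'$ of that module. Then $\Omega(U)$ and $\Omega(U')$ share the weight $s$. To conclude $U \cong U'$, I need the extra step that simple $\He^\scon$-modules sharing one weight and both lying over the single packet are equal. The cleanest route is to apply Theorem~\ref{Thm: lin indep} to the $\Ab^\scon$-characters. Both $\Ind{\He}{\He^\scon} V$ and $\Ind{\He}{\He^\scon} V'$ restrict to $\Ab$ with characters $|X^\scon/X|\,\Theta_\Ab(V)$ and $|X^\scon/X|\,\Theta_\Ab(V')$. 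I expect to need $\Theta_\Ab(V)=\Theta_\Ab(V')$ here, not just equal weight sets. If only weight sets are available, I would instead use Corollary~\ref{cor: weights} directly on a suitably chosen simple constituent, namely one containing $s$ together with all its $X^\scon/X$-translates.

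Granting $U \cong U'$, Frobenius reciprocity gives a nonzero map $V' \to \Res{\He}{\He^\scon} U$. The right-hand side is $\bigoplus_{g\in S} V^g$, and $V'$ is simple, so $V' \cong V^g$ for some $g$. Therefore $V' \in \Pi(V)$. The converse is immediate, since all members of $\Pi(V)$ have the same weights by Theorem~\ref{Thm: ResInd}(1).
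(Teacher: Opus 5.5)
\textbf{Gap: the step $U \cong U'$ is never proved.} You grant it, and neither of your suggested routes can supply it. A common weight $s$ only shows that $U$ and $U'$ have the same central character $W\cdot s$. For general simple $\He^\scon$-modules the central character does not determine the module: already in \S\ref{sec: example}, $\triv_+$ and $\St_+$ are non-isomorphic with the same central character. Your phrase ``both lying over the single packet'' presupposes what you are trying to prove.

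The route through Theorem~\ref{Thm: lin indep} needs $\Theta_\Ab(V)=\Theta_\Ab(V')$, which is not the hypothesis. The route through Corollary~\ref{cor: weights} needs $\Omega(U)=\Omega(U')$, which is exactly the unknown. A constituent containing $s$ together with all its twists by characters of $X^\scon/X$ need not exist: in \S\ref{sec: example}, $\Ind{}{}\triv=\triv_+\oplus\triv_-$ and neither summand contains both extensions $\pm q^{1/2}$. The hypothesis also cannot provide $\Omega(U)=\Omega(U')$. Since $T^\scon\to T$ is surjective, $\Omega(\Ind{\He}{\He^\scon}V)=\Omega(\Ind{\He}{\He^\scon}V')$ says no more than $\Omega(U)|_X=\Omega(V)=\Omega(U')|_X$. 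That does not force $\Omega(U')$ to be a twist of $\Omega(U)$. So bookkeeping with weight sets upstairs cannot close the gap; representation-theoretic input is required.

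\textbf{Missing idea: reduce to the case where one weight determines the simple module.} The paper first inducts over parabolics. Write $V=L(P,\pi,t)$ and $V'=L(P',\pi',t')$. Then equality of weights forces $P=P'$, $t=t'$ and $\Omega(\pi)=\Omega(\pi')$, so by induction $\pi'\cong\pi^g$. The isomorphism $M(P,\pi^g,t)\cong M(P,\pi,t)^g$ passes to Langlands quotients, giving $V'\cong V^g$. The Iwahori--Matsumoto involution handles $V$ tempered but not antitempered.

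This leaves $V$ tempered and antitempered, and only there does your matching step work. By Corollary~\ref{cor: preservation properties}, the simple summands $U,U'$ of the semisimple induced modules are tempered and antitempered. Sharing the weight $s$ means sharing a central character, and Theorem~\ref{thm: EM} then gives $U\cong U'$. From there your Frobenius reciprocity conclusion $V'\hookrightarrow\Res{\He}{\He^\scon}U\cong\bigoplus_{g\in S}V^g$ is correct and coincides with the paper's endgame.
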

\begin{proof}
    Suppose $\Omega(V) = \Omega(V')$ for some $V' \in \Irr{\He}$, and that the result is known for all proper parabolics. If $V$ is not tempered then write $V = L(P , \pi , t)$ and $V' = L(P' , \pi', t')$. Then as in \cite[5.5 Theorem]{EM} we have $P = P'$, $t = t$, and $\Omega(\pi) = \Omega(\pi')$ so by induction $\pi' = \pi^g$. There is an isomorphism     $M(P, \pi^g, t) \to M(P, \pi,t)^g$
given by $h \otimes v \mapsto \theta_g h \theta_{g^{-1}} \otimes v$ and hence also on the unique Langlands quotient $L(P, \pi^g, t) \cong L(P, \pi, t)^g$ so $V' = V^g$.
If $V$ is tempered but not antitempered then apply the $\IM$ and repeat the same argument.

    This leaves the case that $P = \Delta$ and $V$ is tempered and antitempered.  Then $\Ind{\He}{\He^\scon}V$ and $\Ind{\He}{\He^\scon}V'$ are also tempered and antitempered. Let  $s \in \Omega(\Ind{\He}{\He^\scon}V) = \Omega(\Ind{\He}{\He^\scon}V')$
    and $v_s \in \Ind{\He}{\He^\scon}V$ a weight vector.  By Theorem~\ref{Thm: ResInd} this is semisimple so $s$ occurs as a weight of a simple submodule $L$. Similarly there is a simple submodule $L'$ of $\Omega(\Ind{\He}{\He^\scon}V')$ with weight $s$. By Corollary~\ref{cor: preservation properties}, $L$ and $L'$ are tempered and antitempered. As they share a weight they also share central character so $L \cong L'$ by Theorem~\ref{thm: EM}. The result now follows from the decompositions $\Res{\He}{\He^\scon} L \subset \Res{\He}{\He^\scon} \Ind{\He}{\He^\scon}V = \bigoplus V^g$  and $L' \subset \bigoplus V'^g$ with $g \in X^\scon / X$ given by Theorem~\ref{Thm: ResInd}. 
\end{proof}
\qed

 \begin{theorem} \label{thm: lin indep classes}
     The $\Ab$-characters of simple $\He$-modules in distinct conjugacy classes are linearly independent. 
 \end{theorem}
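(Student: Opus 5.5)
The plan is to deduce the statement from Theorem~\ref{Thm: lin indep} for the quasi-simply-connected algebra $\He^\scon$, using the isogeny $\He \to \He^\scon$ of Lemma~\ref{lemma: existence of SC} and the compatibility of induction with characters in Corollary~\ref{cor: inducedCharacters}. The interpretation to prove is this. Take simple modules $V_1, \dots, V_k$ lying in pairwise distinct packets $\Pi(V_i)$, and suppose
\[
\sum_i c_i \Theta_{\Ab}(V_i) = 0 .
\]
We must show that all $c_i = 0$.

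First apply the linear map $\Ind{\He}{\He^\scon} \colon \C[T] \to \C[T^\scon]$. By Corollary~\ref{cor: inducedCharacters},
\[
\sum_i c_i \Theta_{\Ab^\scon}\big(\Ind{\He}{\He^\scon} V_i\big) = 0 .
\]
By Theorem~\ref{Thm: ResInd}(2), each $\Ind{\He}{\He^\scon} V_i$ is semisimple. Write it as $\bigoplus_L m_{i,L} L$ with $L \in \Irr(\He^\scon)$. The relation then becomes $\sum_L \big(\sum_i c_i m_{i,L}\big) \Theta_{\Ab^\scon}(L) = 0$. Theorem~\ref{Thm: lin indep} then gives
\[
\sum_i c_i m_{i,L} = 0 \quad \text{for every } L .
\]

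The key step is to show that a simple $L$ can occur in $\Ind{\He}{\He^\scon} V_i$ for at most one $i$. Suppose $L$ occurs in both $\Ind{\He}{\He^\scon} V_i$ and $\Ind{\He}{\He^\scon} V_j$. Then $\Res{\He}{\He^\scon} L$ is a nonzero submodule of both
\[
\Res{\He}{\He^\scon}\Ind{\He}{\He^\scon} V_i = \bigoplus_g V_i^g
\qquad\text{and}\qquad
\bigoplus_g V_j^g ,
\]
by Theorem~\ref{Thm: ResInd}(1). Both of these are semisimple. So any simple constituent of $\Res{\He}{\He^\scon} L$ is isomorphic to some $V_i^g$ and also to some $V_j^{g'}$. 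This places $V_j$ in $\Pi(V_i)$, so $i = j$.

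With this in hand, each $L$ appearing in the induced modules gives the equation $c_i m_{i,L} = 0$ for the unique index $i$ with $m_{i,L} \neq 0$. Since $\Ind{\He}{\He^\scon} V_i \neq 0$, some $L$ has $m_{i,L} > 0$, and therefore $c_i = 0$.

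I expect the main obstacle to be the disjointness step. It needs simple constituents of semisimple modules to be well defined up to isomorphism, which is fine because everything here is finite dimensional. It also needs the packet relation to be an equivalence relation: $(V^g)^{g'} = V^{g+g'}$, and $V^g$ depends only on the class of $g$ in $X^\scon/X$ up to isomorphism, since $\theta_x$ for $x \in X$ gives an inner twist. One should also confirm that the hypothesis of distinct conjugacy classes means distinct packets $\Pi(V_i)$, so that several members of one packet are not allowed simultaneously. Indeed, members of one packet may have linearly dependent (even equal) characters by the preceding theorem, which is why the statement is phrased at the level of classes.
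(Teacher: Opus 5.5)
Your proof is correct and follows essentially the same route as the paper. You push the relation up to $\He^\scon$ via Corollary~\ref{cor: inducedCharacters}, expand in simple constituents, use Theorem~\ref{Thm: ResInd}(1) to see that inductions from distinct packets share no constituent, and conclude with Theorem~\ref{Thm: lin indep}. The only difference is cosmetic: you invoke semisimplicity of induction (Theorem~\ref{Thm: ResInd}(2)) to work with direct-sum multiplicities, whereas the paper uses Jordan--H\"older multiplicities and so needs only part (1).
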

 \begin{proof}
    Let
\begin{equation} \label{eq: weightsNonSC}
    \sum_i c_i \Theta_{\Ab}(L_i) =  0
\end{equation}
be a  finite linear combination of $\Ab$-characters with the $L_i$ simple $\He$-modules in distinct conjugacy classes.
By Corollary~\ref{cor: inducedCharacters},
\[
\sum_i c_i \Theta_{\Ab^\scon}(\Ind{\He}{\He^\scon}L_i) =  \Ind{\He}{\He^\scon} ( \sum_i c_i  \Theta_{\Ab}(L_i) ) = 0.
\]

Let $\pi_i^j$ be a subquotient of $L_i$ and write $[\pi_i^j : L_i]$ for the multiplicity of $\pi_i^j$ in the Jordan-H\"older series of $L_i$. Then
\[
\Theta_{\Ab^\scon}(\Ind{\He}{\He^\scon}L_i)  = \sum_j [\pi_i^j : L_i] \Theta_{\Ab^\scon}(\pi_i^j).
\]
and subsituting this into (\ref{eq: weightsNonSC}) gives
\[
\sum_i \sum_j c_i  [\pi_i^j : L_i] \Theta_{\Ab^\scon}(\pi_i^j) = 0.
\]

Suppose $\Ind{\He}{\He^\scon}L_i$ and $\Ind{\He}{\He^\scon}L_j$ have a common subquotient.
Then the same holds for $\Res{\He}{\He^\scon} \Ind{\He}{\He^\scon}L_i$ and $\Res{\He}{\He^\scon} \Ind{\He}{\He^\scon}L_j$.
By Theorem~\ref{Thm: ResInd} this cannot hold for $L_i, L_j$ in distinct conjugacy classes.
 Hence each $\Theta_{\Ab^\scon}(\pi_i^j)$ occurs with coefficient  $c_i$ multiplied by a positive integer. By Theorem~\ref{Thm: lin indep}  the $c_i$ are all zero.
 \end{proof}
 \qed

\subsection{Rank 1 example} \label{sec: example}
    The Iwahori-Hecke algebra $\He(\PGL_2, q ) = \He_I(\SL_2(k))$ has equal parameters $q = | \mathcal{O} / \varpi \mathcal{O} | > 1$ and associated $\PGL_2$ root datum $X = \Z x, R = \{ \pm  x\}, Y = \Z x^\vee, R^\vee = \{ \pm 2 x^\vee \} , \Delta = \{x\}$. 
    The finite Weyl group is $W = \{1, s\}$ and the Bernstein presentation is $\He(W) \otimes \C[\Z]$ with cross relation
    \begin{equation} \label{eq: crossPGL2}
        \theta_x T_s = T_s \theta_{-x} + (q-1)(\theta_x + 1) .
    \end{equation}
    The minimal principal series representations $\pi_\tau = \Ind{\Ab}{\He} \tau$ have basis $(T_s + 1) \otimes 1, (T_s - q) \otimes 1$ with respect to which
    \[\pi_{\tau} : T_s \mapsto \begin{pmatrix}
  q &   \\
   & -1
\end{pmatrix}\] and
\begin{align*}
        \pi_{\tau} :  \theta_x \mapsto \frac{1}{1 + q}\begin{pmatrix}
  \tau q +  \tau^{-1} q + q - 1 & 
  \tau^{-1} q -  \tau + q - 1\\
   \tau^{-1}  -  \tau q - q + 1 & 
   \tau  +  \tau^{-1} - q + 1
\end{pmatrix}.
    \end{align*} 
    This has $\Ab$-character $e^\tau + e^{ \tau^{-1}}$ and is irreducible unless $\tau \not\in \{q , -1, q^{-1} \}$. 
When $\tau \in \{ q ,  q^{-1} \}$, it is not semisimple and the subquotients are the trivial   and Steinberg representations \[\triv \colon T_s \mapsto q , \; \theta_x \mapsto q \qquad \St \colon T_s \mapsto -1 , \; \theta_x \mapsto q^{-1} \] which are $q$-deformations of the trivial and sign representations respectively. These fit exact sequences
\begin{align*}
    0 \to \St \to &\pi_q \to \triv \to 0 \\
    0 \to \triv \to &\pi_{q^{-1}} \to \St \to 0.
\end{align*}
When $\tau = -1$ it splits with summands 
\[X^- \colon T_s \mapsto q , \; \theta_x \mapsto -1 \qquad X^+ \colon T_s \mapsto -1 , \; \theta_x \mapsto -1. \]

The Iwahori-Hecke algebra $\He(\SL_2, q) = \He_I(\PGL_2(k))$ also has equal parameters $q>1$ and associated simply-connected $\SL_2$ root datum $X = \Z x^\vee, R = \{ \pm  2x^\vee \}, Y = \Z x, R^\vee = \{ \pm  x \} , \Delta = \{2x^\vee\}$. The cross relation is now
    \begin{equation} \label{eq: crossSL2}
        \theta_{x^\vee} T_s = T_s \theta_{-x^\vee} + (q-1)\theta_{x^\vee}  .
    \end{equation}
    The minimal principal series representations $\sigma_\tau = \Ind{\Ab}{\He} \tau$ have basis $(T_s + 1) \otimes 1, (T_s - q) \otimes 1$ with respect to which
    \[\sigma_{\tau} : T_s \mapsto \begin{pmatrix}
  q &   \\
   & -1
\end{pmatrix}\] and
\begin{align*}
        \sigma_{\tau} :  \theta_{x^\vee} \mapsto \frac{1}{1 + q}\begin{pmatrix}
  \tau q +  \tau^{-1} q  & 
  \tau^{-1} q -  \tau \\
   \tau^{-1}  -  \tau q  & 
   \tau  +  \tau^{-1} 
\end{pmatrix}.
    \end{align*} This has $\Ab$-character $\tau + { \tau^{-1}}$
    and is irreducible unless $\tau \not\in \{\pm q^{1/2} , \pm q^{-1/2} \}$. For these $\tau$ it is not semisimple.
When $\tau \in \{   q^{ 1/2} , q^{ -1/2}  \} $ the subquotients are  \[\triv_+ \colon T_s \mapsto q , \; \theta_{x^\vee} \mapsto  q^{1/2} \qquad \St_+ \colon T_s \mapsto -1 , \; \theta_{x^\vee} \mapsto  q^{-1/2} \] and when $\tau \in \{   -q^{ 1/2} , - q^{ -1/2}  \}$ the subquotients are 
\[\triv_- \colon T_s \mapsto q , \; \theta_{x^\vee} \mapsto  -q^{1/2} \qquad \St_- \colon T_s \mapsto -1 , \; \theta_{x^\vee} \mapsto  -q^{-1/2}. \]

The isogeny $\SL_2 \to \PGL_2, x \mapsto 2x^\vee$ gives an algebra inclusion $\He(\PGL_2) \to \He(\SL_2)$ mapping $\theta_x \mapsto \theta_{2 x^\vee}$; indeed from equation (\ref{eq: crossSL2}) one recovers equation (\ref{eq: crossPGL2})
\begin{equation*}
    \theta_{2 x^\vee} T_s = \theta_{ x^\vee} (T_s \theta_{-x^\vee} + (q-1)\theta_{x^\vee}) = T_s \theta_{2 x^\vee}^{-1} + (q-1)(\theta_{2 x^\vee} + 1)
\end{equation*}
and the graded algebra structure is
\begin{equation*}
    \He(\SL_2) = \He(\PGL_2) \oplus \theta_{x^\vee} \He(\PGL_2).
\end{equation*}
Induction and restriction with respect to this inclusion gives
\[
\begin{array}{l c l l l l l l l}
 \Ind{}{} \: \pi_\tau &= & \sigma_{\tau^{1/2}} \oplus \sigma_{-\tau^{1/2}} &  &  \Res{}{} \: \Ind{}{} \: \pi_\tau &= & \pi_\tau \oplus \pi_\tau  \\
   \Ind{}{} \: \triv &= & \triv_+ \oplus \triv_- &  &  \Res{}{} \: \Ind{}{} \: \triv &= & \triv \oplus \triv  \\
    \Ind{}{} \:  \St &= & \St_+ \oplus \St_- & &  \Res{}{} \:  \Ind{}{} \:  \St &= & \St \oplus \St & & \\
    \Ind{}{} \:  X^- &= & \Ind{}{} \:  X^+ = \sigma_i  &&  \Res{}{} \:  \Ind{}{} \:  X^- &= & \Res{}{} \: \Ind{}{} \:  X^+ = X^- \oplus X^+.
\end{array}
\]
Observe  $X^+ = (X^{-})^{ {x}^\vee}$. 

\section{Hermitian duals and unitarity} 
Affine Hecke algebras have a natural star operation (conjugate-linear anti-automorphism) $*$ defined on the Iwahori-Matsumoto basis by $T_w^* = T_{w^{-1}}$. On the commutative subalgebra this is $ \theta_x \mapsto T_{w_0} \theta_{-w_0(x)} T_{w_0}^{-1} $ where $w_0$ is the long element of the finite Weyl group.
A proper parabolic subalgebra $\He^P \to \He$ will generally not be a $*$-subalgebra, however maps induced by morphisms of based root data $\He_1 \to \He_2$ will exhibit $\He_1$ as a $*$-subalgebra of $\He_2$. 

A module $V$ is hermitian if it may be  equipped with a  hermitian form $\la, \ra$  satisfying $*$-invariance:
\[
\la h v , v' \ra = \la v , h^* v' \ra.
\]
This induces a map $V \to V^\dagger =  \Hom_\C (\overline{V} , \C) $ where $\overline{V}$ is the complex conjugate representation and $\He$ acts on $V^\dagger$ by $(h \lambda)(v) = \lambda (h^* v) $ for $h \in \He, \lambda \in V^\dagger, v \in V$.
 A module is unitary if it may be equipped with a positive definite $*$-invariant hermitian form.

\begin{lemma}
    Let $\He$ be quasi-simply-connected and $V$ a simple $\He$-module. Then $V$ admits a nondegenerate $*$-invariant hermitian form 
    if and only if $\Omega(V)$ is closed under $s \mapsto \overline{w_0(s^{-1})}$.
\end{lemma}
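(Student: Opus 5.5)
The plan is to reduce the statement to Corollary~\ref{cor: weights}: for quasi-simply-connected $\He$ a simple module is determined by its weights. A simple module $V$ admits a nondegenerate $*$-invariant hermitian form if and only if $V \cong V^\dagger$. Indeed, a nondegenerate form gives an injective, hence bijective, $\He$-map $V \to V^\dagger$. Conversely, any isomorphism $\phi\colon V \to V^\dagger$ gives a sesquilinear $*$-invariant form $\la v, v'\ra = \phi(v')(v)$ (up to the paper's conjugation convention). Its conjugate transpose is again $*$-invariant, because $*$ is an anti-involution. By Schur's lemma the space of such forms is one-dimensional, so after rescaling by a suitable scalar the form becomes hermitian. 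Hence it suffices to show that $V^\dagger$ is simple and that
\[
\Omega(V^\dagger) = \{ \overline{w_0(s^{-1})} : s \in \Omega(V)\}.
\]
Given this, Corollary~\ref{cor: weights} yields $V \cong V^\dagger$ exactly when $\Omega(V)$ is stable under $s \mapsto \overline{w_0(s^{-1})}$; this map is an involution, so equality of the two sets is the same as closure of $\Omega(V)$.

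Simplicity of $V^\dagger$ is formal. A submodule $U \subset V^\dagger$ has annihilator $U^\perp \subset V$, which is $*$-stable and hence an $\He$-submodule. The main work is computing the weights of $V^\dagger$. For $\theta_x$, the operator by which it acts on $V^\dagger$ is the conjugate transpose of $V(\theta_x^*)$. The paper gives $\theta_x^* = T_{w_0}\theta_{-w_0(x)}T_{w_0}^{-1}$, so $V(\theta_x^*)$ is conjugate by the invertible operator $V(T_{w_0})$ to $V(\theta_{-w_0 x})$.

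Since $\Ab$ is commutative, the operators $V(\theta_x^*)$ for $x \in X$ are simultaneously conjugate, via the single operator $V(T_{w_0})$, to $V(\theta_{-w_0x})$. Their joint generalised eigenvalues are therefore $x \mapsto s(-w_0x) = (w_0 s)^{-1}(x)$ for $s \in \Omega(V)$, with the same multiplicities. Passing to $V^\dagger = \Hom(\overline V,\C)$ transposes these operators and conjugates their eigenvalues. So $\Omega(V^\dagger) = \{\overline{w_0(s)^{-1}}\}$, which equals $\{\overline{w_0(s^{-1})}\}$ because $w_0$ acts by group automorphisms on $T$.

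I expect the main obstacle to be keeping the conjugations consistent. The dagger uses $\overline V$ together with a conjugate-linear $*$, and one has to check that the resulting weight involution is $s\mapsto\overline{w_0(s^{-1})}$ and not its unconjugated version. Once these conventions are fixed, the argument is the one above: apply Corollary~\ref{cor: weights}, with the quasi-simply-connected hypothesis entering only through that corollary.
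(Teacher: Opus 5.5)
Your proposal is correct and follows essentially the same route as the paper. Both reduce, via Corollary~\ref{cor: weights}, to the identity $\Omega(V^\dagger) = \{\overline{w_0(s^{-1})} : s \in \Omega(V)\}$, and both obtain it from the conjugacy $\theta_x^* = T_{w_0}\theta_{-w_0(x)}T_{w_0}^{-1}$; the paper also cites Opdam--Solleveld's Lemma 4.1 for this. Your write-up additionally supplies the standard steps the paper leaves implicit: existence of a nondegenerate $*$-invariant hermitian form is equivalent to $V \cong V^\dagger$ (via Schur's lemma), and $V^\dagger$ is simple, which is needed before Corollary~\ref{cor: weights} can be applied.
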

\begin{proof} 
   By  Corollary~\ref{cor: weights} it suffices to show  $\Omega(V^\dagger) = \{ \overline{w_0(s^{-1})} : s \in \Omega(V) \}$. This is \cite[Lemma 4.1]{opdamsolHermitian} (indeed it is clear from  $\theta_x^* = T_{w_0} \theta_{-w_0(x)} T_{w_0}^{-1} $ being conjugate to $\theta_{-w_0(x)}$).
\end{proof}
\qed

\begin{example}
    We continue with the notation in \S\ref{sec: example}.  The  simple $\He(\SL_2)$-modules  with weights fixed under $s \mapsto \overline{w_0(s^{-1})}$ are 
    $\sigma_\tau$ for $\tau \in \Re^* \backslash \{\pm q^{1/2} , \pm q^{-1/2} \} \cup \{z \in \C : |z| = 1 \}$ and all the one-dimensional modules.
    Unfortunately there does not appear to be a similar criterion on weights for detecting unitarity modules, which in this case are the one-dimensional modules and $\sigma_\tau$ for  $\tau \in \{z \in \C : |z| = 1 \} $ (unitary principal series) and  $q^{-1/2} < |\tau | < q^{1/2}$ (complementary unitary series). 
    %
\end{example}

\begin{proposition}
    Let $\He_1 \to \He_2$ be a central morphism
    and $V$ an $\He_2$-module. Then $\Res{\He_1}{\He_2} (V^\dagger) = (\Res{\He_1}{\He_2} V)^\dagger$. If $V$ is unitary then so is $\Res{\He_1}{\He_2}V$. If $V$ is unitary and finite-dimensional  then $\Res{\He_1}{\He_2}V$ is a direct sum of simple unitary modules.
\end{proposition}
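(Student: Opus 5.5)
The proof reduces to one structural fact: the central morphism $f\colon \He_1 \to \He_2$, $\theta_x T_w \mapsto \theta_{f(x)} T_w$, is a $*$-homomorphism. Once this is in place, all three claims are formal.

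\textbf{Step 1: $f$ commutes with $*$.} The star operation is defined by $T_w^* = T_{w^{-1}}$ on the Iwahori--Matsumoto basis, extended conjugate-linearly and anti-multiplicatively. The map $f$ is the identity on $\He(W)$ and sends $\theta_x \mapsto \theta_{f(x)}$. On the commutative part we have $\theta_x^* = T_{w_0}\theta_{-w_0(x)}T_{w_0}^{-1}$. Since $f$ is $W$-equivariant (the Weyl groups are identified via the isomorphism of based root systems) and $f(-w_0 x) = -w_0 f(x)$, we get
\[
f(\theta_x^*) = T_{w_0}\theta_{-w_0 f(x)} T_{w_0}^{-1} = \theta_{f(x)}^* .
\]
Both $*$ and $f$ are determined by their values on the generators $T_s$ and $\theta_x$, together with (anti)multiplicativity and (conjugate-)linearity. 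Hence $f(h^*) = f(h)^*$ for all $h \in \He_1$. This is the step needing the most care: one must confirm that the formula for $\theta_x^*$ holds uniformly in both algebras, with the same $T_{w_0}$ and the same parameters. That uniformity follows from parameter compatibility, which is part of the definition of a central morphism.

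\textbf{Step 2: restriction commutes with the hermitian dual.} As vector spaces, $\Res{\He_1}{\He_2}(V^\dagger)$ and $(\Res{\He_1}{\He_2}V)^\dagger$ are both $\Hom_\C(\overline V,\C)$. For $h \in \He_1$ the first carries the action $(h\lambda)(v) = \lambda(f(h)^* v)$, and the second the action $(h\lambda)(v) = \lambda(f(h^*) v)$. These agree by Step 1, giving the identity map as the required isomorphism.

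\textbf{Step 3: unitarity and the decomposition.} Suppose $\langle\,,\rangle$ is a positive definite $*$-invariant form on $V$. For $h \in \He_1$,
\[
\langle f(h)v,v'\rangle = \langle v, f(h)^*v'\rangle = \langle v, f(h^*)v'\rangle,
\]
so the same form is $*$-invariant for $\He_1$, and $\Res{\He_1}{\He_2}V$ is unitary. If $V$ is finite-dimensional, take any $\He_1$-submodule $U$. Its orthogonal complement $U^\perp$ is $\He_1$-stable: for $u' \in U^\perp$ and $u \in U$ we have $\langle u, hu'\rangle = \langle h^*u, u'\rangle = 0$, because $U$ is stable under $h^*$. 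Positive definiteness gives $V = U \oplus U^\perp$. Induction on dimension then decomposes $\Res{\He_1}{\He_2}V$ into simple submodules, each unitary with respect to the restricted form.
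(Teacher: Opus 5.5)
Your proof is correct and follows the same route as the paper. The paper's entire argument is that the central morphism exhibits $\He_1$ as a $*$-subalgebra of $\He_2$ and that finite-dimensional unitary modules are semisimple; you simply supply the details it omits, namely the check on Bernstein generators that $f(h^*) = f(h)^*$ (which, stated for a $*$-homomorphism, also covers non-injective central morphisms) and the orthogonal-complement argument for complete reducibility.
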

\begin{proof}
    The  central morphism exhibits $\He_1$ as a $*$-subalgebra of $\He_2$, and finite-dimensional unitary modules are semisimple.
\end{proof}
\qed


\begin{lemma} \label{lemma: stargraded}
    The star operation $*$ makes $\He_2$ a graded $*$-algebra, that is, $\He_{2 , g}^* = \He_{2, -g}$.
\end{lemma}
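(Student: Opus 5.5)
We need to show that $\He_{2,g}^* = \He_{2,-g}$. Since $*$ is a conjugate-linear anti-automorphism and an involution, the inclusion $\He_{2,g}^* \subset \He_{2,-g}$ for every $g$ already gives equality: applying it to $-g$ and using $*\circ * = \mathrm{id}$ yields $\He_{2,-g} = (\He_{2,-g}^*)^* \subset \He_{2,g}^*$. So it suffices to check the inclusion on the spanning set $\{\theta_x T_w : x + X_1 = g, \, w \in W\}$ of $\He_{2,g}$.

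For such an element we have $(\theta_x T_w)^* = T_w^* \theta_x^* = T_{w^{-1}} \, T_{w_0} \theta_{-w_0(x)} T_{w_0}^{-1}$, using the formula for $*$ on $\Ab$ recalled above. The elements $T_{w^{-1}}$, $T_{w_0}$ and $T_{w_0}^{-1}$ all lie in $\He(W) \subset \He_1 = \He_{2,0}$; note $T_{w_0}^{-1}$ is a polynomial in the $T_s$, by the quadratic relations. The element $\theta_{-w_0(x)}$ lies in $\He_{2,\, -w_0(x) + X_1}$. Since $W$ acts trivially on $X_2/\Z R$ and $\Z R \subset f(X_1)$ (the central morphism restricts to an isomorphism on roots), we get $w_0(x) - x \in \Z R \subset X_1$. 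Hence $-w_0(x) + X_1 = -g$.

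By the multiplicativity $\He_{2,g}\He_{2,g'} \subset \He_{2,g+g'}$ from Proposition~\ref{prop: gradedAlgebra}, the product lies in $\He_{2,0+0-g+0} = \He_{2,-g}$. Conjugate-linearity of $*$ then extends the inclusion from the spanning set to all of $\He_{2,g}$.

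There is one point to watch, which I expect to be the only real obstacle: the star operation is defined using $T_w^* = T_{w^{-1}}$ on the Iwahori--Matsumoto basis. I must make sure that $\theta_x^*$ is indeed given by the displayed formula in $\He_2$, and that $*$ on $\He_2$ restricts to $*$ on $\He_1$; the latter holds because $f$ fixes $W$ and $w_0$. If one prefers to avoid that formula, an alternative is to write $\theta_x$ for dominant $x$ as $q^{-\ell(t_x)/2}T_{t_x}$ in the Iwahori--Matsumoto basis. Then $T_{t_x}^* = T_{t_{-x}}$, and $t_{-x}$ has translation part in the class $-g$. Finally use that $\He_{2,g}$ is spanned by the $T_{\tilde w}$ with $\tilde w \in W \ltimes (x + X_1)$, which follows from the Bernstein relations and the grading of Proposition~\ref{prop: gradedAlgebra}.
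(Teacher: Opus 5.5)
Your argument is correct and is essentially the paper's proof. The paper deduces the lemma in one line from $\theta_x^* = T_{w_0}\theta_{-w_0(x)}T_{w_0}^{-1}$ together with the fact that $w_0$ acts trivially on $X_2/X_1$; you spell out the steps it leaves implicit (the multiplicativity of the grading from Proposition~\ref{prop: gradedAlgebra}, and the involution argument upgrading the inclusion to an equality), so the alternative route through the Iwahori--Matsumoto basis is not needed.
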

\begin{proof}
    This follows from $\theta_x^* = T_{w_0} \theta_{-w_0(x)} T_{w_0}^{-1}$ and $w_0$ acting trivially on $X_2 / X_1$.
\end{proof}
\qed


\begin{theorem} \label{theorem: induced pairing}
   Let $\He_1 \to \He_2$ be an isogeny and let $E \colon \He_2 \to \He_1$ be the projection onto $\He_{2, 0} = \He_1$.  The pairing
    \[
    \begin{array}{c c c }
        \Ind{\He_1}{\He_2} (V^\dagger) \times \Ind{\He_1}{\He_2} V &\to &\C \\
        \la h' \otimes \lambda , h \otimes v \ra & =  &(E(h^* h')\lambda)(v)
    \end{array}
    \]
    induces an isomorphism $ \Ind{\He_1}{\He_2} (V^\dagger) \cong (\Ind{\He_1}{\He_2} V)^\dagger $. 
\end{theorem}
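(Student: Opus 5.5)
We need to check three things: that the pairing is well defined on the balanced tensor products, that it is linear in the right variables and compatible with the $\He_2$-actions, and that it is nondegenerate. The plan uses the grading of Proposition~\ref{prop: gradedAlgebra}, the free-module structure of Corollary~\ref{cor: free}, and Lemma~\ref{lemma: stargraded}.

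\emph{Well-definedness.} The key observation is that $E$ is an $(\He_1,\He_1)$-bimodule map, since the decomposition $\He_2=\bigoplus_g \He_{2,g}$ is one of bimodules (Corollary~\ref{cor: free}). So $E(a h b)=aE(h)b$ for $a,b\in\He_1$. Replacing $h'$ by $h'a$ with $\lambda$ replaced by $a\lambda$ gives $E(h^*h'a)\lambda=E(h^*h')a\lambda$, so the two expressions agree. Replacing $h$ by $hb$ with $v$ replaced by $bv$ gives
\[
(E(b^*h^*h')\lambda)(v)=(b^*E(h^*h')\lambda)(v)=(E(h^*h')\lambda)(bv),
\]
using the definition of the $\He_1$-action on $V^\dagger$. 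This needs $b^*\in\He_1$, which holds because $\He_1$ is a $*$-subalgebra. Sesquilinearity is immediate.

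\emph{Invariance.} For $k\in\He_2$ we want $\la kh'\otimes\lambda, h\otimes v\ra=\la h'\otimes\lambda, k^*h\otimes v\ra$. This reduces to $(k^*h)^*=h^*k$, so the pairing defines an $\He_2$-map $\Ind{\He_1}{\He_2}(V^\dagger)\to(\Ind{\He_1}{\He_2}V)^\dagger$.

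\emph{Nondegeneracy.} This is the main step. Choose representatives $x_g\in X_2$ for $g\in X_2/X_1$. Then $\Ind{\He_1}{\He_2}V=\bigoplus_g \theta_{x_g}\otimes V$, and similarly for $V^\dagger$. By Lemma~\ref{lemma: stargraded}, $\theta_{x_g}^*\in\He_{2,-g}$, so $\theta_{x_g}^*\theta_{x_{g'}}\in\He_{2,g'-g}$. Hence $E$ kills this product unless $g=g'$, and the pairing is block diagonal with respect to the grading. On the block $g=g'$, we have $\theta_{x_g}^*\theta_{x_g}=T_{w_0}\theta_{-w_0x_g}T_{w_0}^{-1}\theta_{x_g}\in\He_1$, and we must show it is invertible in $\He_1$. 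Writing $u=T_{w_0}\theta_{-w_0x_g}T_{w_0}^{-1}$, this element is the product of $u$ and $\theta_{x_g}$, each invertible in $\He_2$. Since $\He_2$ is graded and $u\in\He_{2,-g}$, the inverse $u^{-1}$ lies in $\He_{2,g}$. Writing the product as $u\,\theta_{x_g}$ and its candidate inverse as $\theta_{x_g}^{-1}u^{-1}$, the latter lies in $\He_{2,-g}\He_{2,g}\subset\He_{2,0}=\He_1$. Thus $c_g:=\theta_{x_g}^*\theta_{x_g}$ is a unit of $\He_1$, and the block pairing is $(\lambda,v)\mapsto(c_g\lambda)(v)$. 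Because the canonical pairing $V^\dagger\times V\to\C$ is perfect and $c_g$ acts invertibly on $V^\dagger$, each block is a perfect pairing.

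\emph{Conclusion.} Comparing dimensions blockwise (in the finite-dimensional case), or arguing directly since the block diagonal pairing is perfect, the map $\Ind{\He_1}{\He_2}(V^\dagger)\to(\Ind{\He_1}{\He_2}V)^\dagger$ is a bijective $\He_2$-map, hence an isomorphism. The main obstacle is the invertibility of $c_g$ in $\He_1$, which the graded-unit argument above handles. I would double-check that $\He_2$ is genuinely graded by $X_2/X_1$ for products, which Proposition~\ref{prop: gradedAlgebra} supplies.
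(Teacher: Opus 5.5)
Your proof is correct, but it takes a different route from the paper's.

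The paper factors the pairing as $\He_2\otimes_{\He_1}V^\dagger\to\Hom_{\He_1}(\He_2,V^\dagger)\to\Hom_\C(\He_2\otimes_{\He_1}\overline{V},\C)$, with $h'\otimes\lambda\mapsto(h\mapsto E(h^*h')\lambda)$. It asserts that the first map is an isomorphism because $\He_2$ is finitely generated projective over $\He_1$ (Corollary~\ref{cor: free}), and that the second is an isomorphism by tensor--hom adjunction.

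You instead work directly in the graded decomposition $\bigoplus_g\theta_{x_g}\otimes V$. Lemma~\ref{lemma: stargraded} makes distinct graded pieces orthogonal. On each diagonal block the pairing is the tautological one $V^\dagger\times V\to\C$ twisted by $c_g=\theta_{x_g}^*\theta_{x_g}$. You correctly show $c_g$ is a unit of $\He_1$, using the fact that a homogeneous unit in a graded algebra has a homogeneous inverse.

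The paper's route is shorter and more conceptual. However, projectivity alone does not make its first map bijective: it would fail if $E$ were replaced by $0$, while projectivity would still hold. What is actually needed is that the $\He_1$-valued form $(h,h')\mapsto E(h^*h')$ on $\He_2$ is perfect. Your block-diagonal computation with invertible $c_g$ proves exactly that.

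Your version also has two further advantages. It works for arbitrary $V$: the sum over $X_2/X_1$ is finite, so $(\Ind{\He_1}{\He_2}V)^\dagger$ splits blockwise. It also gives directly the orthogonality $\la V^g,V^{g'}\ra=0$ for $g\neq g'$, which the paper invokes in the next theorem. There, the choice of representatives $T_{x^+w}$ amounts to arranging $c_g=1$.
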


\begin{proof}
The pairing comes from the composition of the following maps
\[
\begin{array}{c c  c  c c}
    \He_2 \otimes_{\He_1} V^\dagger & \to  &\Hom_{\He_1}(\He_2 , V^\dagger ) &\to &\Hom_{\C}(\He_2 \otimes_{\He_1} \overline{V} , \C) \\
    h' \otimes \lambda &\mapsto &(h \mapsto E(h^* h') \lambda) & \mapsto & (h \otimes v \mapsto (E(h^* h') \lambda)(v)).
\end{array}
  \]
  where $\Hom_{\He_1}(\He_2 , V^\dagger )$ is the space of morphisms $\He_2 \to V^\dagger$ satisfying $f(h_2 h_1) = h_1^* f(h_2)$.
The first map is well-defined on $\He_2 \otimes_{\He_1} V^\dagger$ as $E(h' h_1) = E(h')h_1$ for $h_1 \in \He_1$. By construction the pairing is $*$-invariant for $\He_2$ so the maps are $\He_2$-equivariant.

 As $\He_1 \to \He_2$ is an isogeny,  $\He_2$ is a finitely generated projective $\He_1$-module by Corollary~\ref{cor: free}  so the first map is an isomorphism for all $V$.
  The second map is an isomorphism by the tensor-hom adjunction.
\end{proof}
\qed

\begin{theorem}
 Let $\He_1 \to \He_2$ be an isogeny with  $|X_2 / X_1| = n$ and $V$ a finite dimensional $\He_1$-module with $*$-invariant hermitian form $\la, \ra$ with signature $(p,q)$. Then $\Ind{\He_1}{\He_2} V$ admits a $*$-invariant hermitian form with signature $(np, nq)$. In particular  $\Ind{\He_1}{\He_2} $ preserves unitarity.
\end{theorem}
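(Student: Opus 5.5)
The plan is to define the form on $\Ind{\He_1}{\He_2} V$ by the same recipe as in Theorem~\ref{theorem: induced pairing}, using the conditional expectation $E$:
\[
\la h \otimes v , h' \otimes v' \ra_{\Ind{}{}} = \la E(h'^* h) v , v' \ra .
\]
The argument then has three steps: show the form is well defined and $*$-invariant, show it is hermitian, and compute its signature from the grading of Proposition~\ref{prop: gradedAlgebra}.

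\emph{Well-definedness and $*$-invariance.} For $h_1 \in \He_1$ we have $E(h_1 h)=h_1E(h)$ and $E(h h_1)=E(h)h_1$. This follows because $E$ is projection onto the degree-zero part and $\He_1 = \He_{2,0}$, so multiplying by $\He_1$ preserves each graded piece. Hence the form is balanced over $\He_1$ in each variable. It is $*$-invariant for $\He_2$ by construction, since $(h''h')^* h = h'^*(h''^* h)$.

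\emph{Hermitian property.} We need $E(a^*) = E(a)^*$. This holds by Lemma~\ref{lemma: stargraded}: $*$ maps $\He_{2,g}$ to $\He_{2,-g}$, so it commutes with projection onto degree $0$. Combined with $*$-invariance of $\la,\ra$ on $V$, this gives
\[
\overline{\la h'\otimes v', h\otimes v\ra} = \la E(h'^*h)v,v'\ra .
\]

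\emph{Signature.} Fix representatives $x_g \in X_2$ for $g \in X_2/X_1$. By Theorem~\ref{Thm: ResInd},
\[
\Ind{\He_1}{\He_2} V = \bigoplus_g \theta_{x_g}\otimes V .
\]
For $g\neq g'$, the element $\theta_{x_{g'}}^*\theta_{x_g}$ lies in degree $g-g'\neq 0$ by Lemma~\ref{lemma: stargraded}. Its projection under $E$ therefore vanishes, so the summands are mutually orthogonal. On the summand indexed by $g$, the form is
\[
(v,v')\mapsto \la E(\theta_{x_g}^*\theta_{x_g})v,v'\ra = \la a_g v, v'\ra, \qquad a_g=\theta_{x_g}^*\theta_{x_g}\in\He_1 .
\]
The signature on this summand equals that of $\la,\ra$ precisely when the hermitian operator $V(a_g)$ is positive definite with respect to $\la,\ra$, at least on a nondegenerate part; in general it is congruent to the original form.

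The main obstacle is exactly this last point. Positivity of $\theta_{x_g}^*\theta_{x_g}$ is not automatic, because $\theta_x^* = T_{w_0}\theta_{-w_0 x}T_{w_0}^{-1}$ is not $\theta_x^{-1}$. My first attempt is to choose the representatives $x_g$ dominant or antidominant and compare $\theta_x^*\theta_x$ with $T_{w_0}\theta_{-w_0x}T_{w_0}^{-1}\theta_x$. Failing that, I would replace $\theta_{x_g}$ by a better representative $u_g\in\He_{2,g}$ with $u_g^*u_g = 1$, or $u_g^* u_g$ central and positive. The natural candidate is an element in the Iwahori–Matsumoto basis, $u_g = T_{\omega}$ for $\omega$ a length-zero element of $\widetilde W_2$ representing $g$. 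For such $\omega$ we have $T_\omega^* = T_{\omega^{-1}} = T_\omega^{-1}$, so $T_\omega^*T_\omega = 1$. These exist when $X_2/X_1$ is realised by length-zero elements (for instance when $X_2/X_1$ embeds into $\Omega_2 = \widetilde W_2/W_2^a$, which holds since $\Z R\subset X_1$). I also need to check that $T_\omega \in \He_{2,g}$; this holds because $T_\omega = \theta_x T_{w}$ up to $\He_1$ with $x\in g$.

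With these representatives, each summand $T_\omega\otimes V$ is isometric to $(V,\la,\ra)$, since
\[
\la T_\omega\otimes v, T_\omega\otimes v'\ra = \la v,v'\ra .
\]
The summands are orthogonal by the grading argument. The total signature is therefore $(np,nq)$. In particular, if $q=0$ and the form is positive definite, then so is the induced form, so $\Ind{\He_1}{\He_2}$ preserves unitarity.
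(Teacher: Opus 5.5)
Your proposal is correct and is essentially the paper's proof: the form built from the projection $E$, mutual orthogonality of the graded pieces via Lemma~\ref{lemma: stargraded}, and the choice of representatives $T_\omega \in \He_{2,g}$ with $\omega$ of length zero, so that $T_\omega^* T_\omega = 1$ and each summand is isometric to $V$ (the paper produces $\omega = x^+ w$ after writing $x = x^+ - x^-$ with $x^\pm$ dominant and $x^- \in X_1$). One small correction: $X_2/X_1$ is a quotient of $\Omega_2 \cong X_2/\Z R$ rather than a subgroup, and it is the surjectivity of $\Omega_2 \to X_2/X_1$ (coming from $\Z R \subset X_1$) that gives every class a length-zero representative.
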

\begin{proof}
Recall the $X_2 / X_1$ grading $\Ind{\He_2}{\He_1} V = \bigoplus_g  V^g$ where $V^g = \He_{2, g} \otimes_{\He_1} V$. By Lemma~\ref{lemma: stargraded} the pairing in Theorem~\ref{theorem: induced pairing} satisfies $\la V^g, V^{g'} \ra = 0$ unless $g = g'$, so it suffices to consider the signature on the restriction to $V^g \times V^g$.

    We refine the choice of representative for $\He_{2,g} = \theta_x \He_1$ where $x + X_1 = g$.
  Write $x = x^+ - x^-$ with $x^+, x^- \in X_2$ dominant, chosen such that $x^- \in X_1$.  
   Then $\theta_x \He_1= T_{x^+} \He_1$.  
   As $W \ltimes \Z R $ acts simply transitively on the set of chambers, there is a $w \in W \ltimes \Z R$ such that ${x^+}w$ preserves the fundamental chamber and so $l({x^+}w)=0$, and $l({x^+}) = l(w) = l(w^{-1})$ so $T_{{x^+}w}T_{w^{-1}} = T_{{x^+} w w^{-1}} = T_{{x^+}}$. Hence $\theta_x \He_1 = T_{x^+} \He_1 = T_{{x^+}w} \He_1$ and furthermore $(T_{{x^+}w})^* T_{{x^+}w} = T_{({x^+}w)^{-1}} T_{{x^+}w} = T_1$.

Let $T_{{x^+}w} \otimes v \in V^g$. Then $\la T_{{x^+}w} \otimes v , T_{{x^+}w} \otimes v \ra =\la v, v \ra$ and hence the induced form on $\Ind{\He_1}{\He_2} V = \bigoplus_g  V^g$ in this basis is the direct sum of $X_2 / X_1$ copies of the form on $V$.
\end{proof}
\qed



\section{Graded affine Hecke algebras}
Let $\gh = \gh(R, \afr^* , k)$ be the graded affine Hecke algebra associated to a root system $(R, \afr^*)$ with parameters $k \colon R \to \Re \backslash \{ 0 \}$. This has a large commutative subalgebra $\ga$ isomorphic to the symmetric algebra $S(\afr^*)$.
The preprint \cite{BC2013} outlines a proof of linear independence of $\ga$-characters of simple $\gh$-modules which has been followed in this paper to prove Theorem~\ref{Thm: lin indep}. 

\begin{theorem}[{\cite[Theorem 4.4.2]{BC2013}, see also \cite[Proposition 3.6]{AntorOkada}}] 
     The   $\ga$-characters of simple $\gh$-modules are linearly independent. 
\end{theorem}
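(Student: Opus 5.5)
We will transport the proof of Theorem~\ref{Thm: lin indep} to the graded setting, replacing $\Ab$-weights in $T$ by $\ga$-weights in $\afr_\C = \afr \otimes \C$ and the absolute-value map $s \mapsto \log|s|$ by the real-part map $\nu \mapsto \Re \nu$. So suppose $\sum_i c_i \Theta_{\ga}(L_i) = 0$ is a finite relation with distinct simple $\gh$-modules $L_i$, where $\Theta_{\ga}(L) = \sum_{\nu} [\nu : L]\, \nu$ runs over generalised $\ga$-weights. We argue by induction on the rank, and within a fixed rank by downward induction on the maximal real weight that occurs. Note that there is no isogeny issue here: $\gh$ depends only on $(R,\afr^*,k)$, which is the reason no quasi-simply-connected hypothesis is needed.

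The first ingredient is the graded Langlands classification (Evens, Barbasch--Moy). Every simple module is the unique quotient $L(P,\sigma,\nu)$ of $\Ind{\gh^P}{\gh}(\sigma \otimes \nu)$, where $\sigma$ is a tempered simple $\gh_P$-module, $\nu \in \afr^{P*}_\C$ and $\Re \nu \in \afr^{P++}$. The graded analogue of Lemma~\ref{inducedWeights} describes the weights of this induced module as $W^P$-translates. We will then prove the graded version of Proposition~\ref{prop: Langlands class}, with the same $\rho^P$ computation, and deduce the analogue of Lemma~\ref{lemma: weights mult.}: each weight $\mu+\nu$ with $\mu \in \Omega(\sigma)$ has the same multiplicity in $L(P,\sigma,\nu)$ as $\mu$ has in $\sigma$. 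Lemma~\ref{lemma: t=t'} becomes trivial, because $\nu$ is determined additively by the weight.

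For the induction step, choose a weight with maximal real part $\lambda$ and set $P = P_\lambda$, $\nu_0 = \lambda^0$. Exactly as in the proof of Theorem~\ref{Thm: lin indep}, the coefficients of weights of the form $\mu+\nu$, with $\Re\nu = \nu_0$ and $\mu$ tempered for $\gh_P$, come only from modules $L(P,\sigma_l,\nu)$. This yields a relation among the $\ga_P$-characters of tempered $\gh_P$-modules. If $P \neq \Delta$, induction on rank kills these coefficients; we then remove the corresponding modules and repeat the argument.

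The remaining case, and the main obstacle, is a relation among tempered modules of $\gh$ itself. Here we apply the graded Iwahori--Matsumoto involution, which sends a weight $\nu$ to $-\nu$. Repeating the argument reduces us to modules that are both tempered and antitempered, that is, modules with purely imaginary central character. Separating the relation by central character, it suffices to show that for a central character $W\cdot\nu$ with $\Re\nu = 0$ there is a unique such simple module. For this we use the graded analogue of Lemma~\ref{lemma: Kato original} (Kato's criterion, proved by Lusztig/Kriloff--Ram): $\Ind{\ga}{\gh}\nu$ is irreducible if $\langle \nu,\alpha^\vee\rangle \neq \pm k_\alpha$ for all $\alpha$ and $\Stab_W(\nu)$ is generated by reflections $s_\alpha$ with $\langle\nu,\alpha^\vee\rangle = 0$. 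The first condition holds because $k_\alpha$ is real and nonzero while $\langle\nu,\alpha^\vee\rangle$ is imaginary. The second holds because $W$ acts linearly on the vector space $\afr_\C$, and by Steinberg's theorem the stabiliser of any vector under a finite reflection group is generated by the reflections fixing it; this is what replaces Lemma~\ref{lemma: reflectionGroup}. Frobenius reciprocity then shows that both candidate modules are quotients of the same irreducible principal series, so they coincide and the coefficient vanishes. The point that needs the most care is checking the reducibility criterion for unequal parameters, which is why we cite \cite{BC2013} and \cite{AntorOkada}.
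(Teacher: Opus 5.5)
Your proposal is correct and is essentially the argument the paper relies on: the paper only cites \cite{BC2013}, whose outline it followed for Theorem~\ref{Thm: lin indep}, and you run that argument in the graded setting, rightly noting that Lemma~\ref{lemma: t=t'} becomes trivial and that Steinberg's theorem for the linear $W$-action on $\afr_\C$ replaces Lemma~\ref{lemma: reflectionGroup}, so no quasi-simply-connected hypothesis is needed. There are two minor notational slips: $\nu$ should lie in $\afr^P_\C$ rather than $\afr^{P*}_\C$, and Kato's condition should be phrased with the natural pairing $\alpha(\nu)$ rather than $\langle\nu,\alpha^\vee\rangle$.
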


\begin{remark}
    The requirement that $k$ take nonzero values is necessary; the trivial and sign modules of the degenerate Hecke algebra  $\gh(R, \afr^*, 0) = \C[W] \ltimes S[\afr^*] $  have identical $\ga$-character.
\end{remark}

\bibliographystyle{aalpha}
\bibliography{bibfile}

@article{lusztigUnipotent1,
    author = {Lusztig, George},
    title = { Classification of unipotent representations of simple p-adic groups },
    journal = {International Mathematics Research Notices},
    volume = {1995},
    number = {11},
    pages = {517-589},
    year = {1995},
    month = {01},
    issn = {1073-7928},
    doi = {10.1155/S1073792895000353},
    url = {https://doi.org/10.1155/S1073792895000353},
    eprint = {https://academic.oup.com/imrn/article-pdf/1995/11/517/6768457/1995-11-517.pdf},
}

@Inbook{RamRam2003,
author="Ram, Arun
and Ramagge, Jacqui",
editor="Lakshmibai, V.
and Balaji, V.
and Mehta, V. B.
and Nagarajan, K. R.
and Pranjape, K.
and Sankaran, P.
and Sridharan, R.",
title="Affine {Hecke} Algebras, Cyclotomic {Hecke} algebras and Clifford Theory",
bookTitle="A Tribute to C. S. Seshadri: Perspectives in Geometry and Representation Theory",
year="2003",
publisher="Hindustan Book Agency",
address="Gurgaon",
pages="428--466",
isbn="978-93-86279-11-8",
doi="10.1007/978-93-86279-11-8_26",
url="https://doi.org/10.1007/978-93-86279-11-8_26"
}

@misc{AntorOkada,
      title={A Microlocal Description of {Aubert-Zelevinsky} Duality on Unipotent {$L$}-Parameters}, 
      author={Jonas Antor and Emile Okada},
      year={2026},
      eprint={2605.06180},
      archivePrefix={arXiv},
      primaryClass={math.RT},
      url={https://arxiv.org/abs/2605.06180}, 
}

@article{Evens,
 ISSN = {00029939, 10886826},
 URL = {http://www.jstor.org/stable/2161778},
 author = {Sam Evens},
 journal = {Proceedings of the American Mathematical Society},
 number = {4},
 pages = {1285--1290},
 publisher = {American Mathematical Society},
 title = {The {Langlands} Classification for Graded {{Hecke}} Algebras},
 urldate = {2026-06-23},
 volume = {124},
 year = {1996}
}

@article{Zelevinskii1981padicAO,
  title={p-adic analog of the {Kazhdan-Lusztig} hypothesis},
  author={A. V. Zelevinsky},
  journal={Functional Analysis and Its Applications},
  year={1981},
  volume={15},
  pages={83-92},
  url={https://api.semanticscholar.org/CorpusID:124965258}
}

@article{Rogawski1985,
author = {Rogawski, J.D.},
journal = {Inventiones mathematicae},
pages = {443-466},
title = {On modules over the {Hecke} algebra of a p-adic group.},
url = {http://eudml.org/doc/143205},
volume = {79},
year = {1985},
}

@book{BK93,
 ISBN = {9780691021140},
 URL = {http://www.jstor.org/stable/j.ctt1b9s03n},
 author = {Colin J. Bushnell and Philip C. Kutzko},
 publisher = {Princeton University Press},
 title = {The Admissible Dual of GL(N) via Compact Open Subgroups. (AM-129)},
 urldate = {2026-06-10},
 year = {1993}
}

@article{zelevinsky80,
     author = {Zelevinsky, A. V.},
     title = {Induced representations of reductive p-adic groups. {II.} {On} irreducible representations of ${\rm GL}(n)$},
     journal = {Annales scientifiques de l'\'Ecole Normale Sup\'erieure},
     pages = {165--210},
     year = {1980},
     publisher = {Elsevier},
     volume = {Ser. 4, 13},
     number = {2},
     doi = {10.24033/asens.1379},
     mrnumber = {83g:22012},
     zbl = {0441.22014},
     language = {en},
     url = {https://www.numdam.org/articles/10.24033/asens.1379/}
}

@article{Reeder2002,
  title     = {Isogenies of {{Hecke}} algebras and a {Langlands} correspondence for ramified principal series representations},
  author    = {Reeder, Mark},
  journal   = {Representation Theory of the American Mathematical Society},
  volume    = {6},
  pages     = {101--126},
  year      = {2002},
  publisher = {American Mathematical Society},
  doi       = {10.1090/S1088-4165-02-00167-X},
  url       = {https://www.ams.org/ert/2002-006-04/S1088-4165-02-00167-X/}
}

@article{Solleveld_2021,
   title={Affine {Hecke} algebras and their representations},
   volume={32},
   ISSN={0019-3577},
   url={http://dx.doi.org/10.1016/j.indag.2021.01.005},
   DOI={10.1016/j.indag.2021.01.005},
   number={5},
   journal={Indagationes Mathematicae},
   publisher={Elsevier BV},
   author={Solleveld, Maarten},
   year={2021},
 pages={1005–1082} }

@article{kato1981irreducibility,
  title={Irreducibility of principal series representations for {{Hecke}} algebras of affine type},
  author={Kato, {Shin-ichi}},
  journal={J. Fac. Sci. Univ. Tokyo Sect. IA Math},
  volume={28},
  number={3},
  pages={929--43},
  year={1981}
}

@Book{BorelWallach,
  title     = {Continuous cohomology, discrete subgroups, and representations of reductive groups},
  publisher = {American Mathematical Society},
  year      = {2000},
  author    = {Borel, Armand. and Wallach, Nolan R.},
  series    = {Mathematical surveys and monographs, v. 67},
  address   = {Providence, R.I},
  edition   = {2nd ed.},
  isbn      = {0821808516},
  booktitle = {Continuous cohomology, discrete subgroups, and representations of reductive groups},
  language  = {eng},
  lccn      = {98044527},
}

@article{Solleveld2010,
  title={On the classification of irreducible representations of affine {Hecke} algebras with unequal parameters},
  author={Maarten Solleveld},
  journal={Representation Theory of The American Mathematical Society},
  year={2010},
  volume={16},
  pages={1-87},
  url={https://api.semanticscholar.org/CorpusID:119140027}
}

@article{EM,
author = {Evens, Sam and Mirkovi\'c, Ivan},
year = {1997},
month = {02},
pages = {},
title = {{Fourier transform and the Iwahori-Matsumoto involution}},
volume = {86},
journal = {Duke Mathematical Journal},
doi = {10.1215/S0012-7094-97-08613-0}
}

@misc{BC2013,
      title={{Hermitian forms for affine {Hecke} algebras}}, 
note = {v1} ,
       author={Dan Barbasch and Dan Ciubotaru},
      year={2013},
howpublished={https://arxiv.org/abs/1312.3316}
 }

@article{RamKril,
author = {Kriloff, Cathy and Ram, Arun},
year = {2002},
month = {04},
pages = {31-69},
title = {{Representations of graded {Hecke} algebras}},
volume = {6},
journal = {REPRESENTATION THEORY An Electronic Journal of the American Mathematical Society Volume},
doi = {10.1090/S1088-4165-02-00160-7}
}

@book{Humphreys_1990, place={Cambridge}, series={Cambridge Studies in Advanced Mathematics}, title={Reflection Groups and Coxeter Groups}, publisher={Cambridge University Press}, author={Humphreys, James E.}, year={1990}, collection={Cambridge Studies in Advanced Mathematics}}

@misc{opdamsolHermitian,
      title={{Hermitian duals and generic representations for affine {Hecke} algebras}}, 
      author={Eric Opdam and Maarten Solleveld},
      year={2023},
      eprint={2309.04829},
      archivePrefix={arXiv},
      primaryClass={math.RT},
      url={https://arxiv.org/abs/2309.04829}, 
}

@article{Lusztig,
  title={{Affine {Hecke} algebras and their graded version}},
  author={George Lusztig},
  journal={Journal of the American Mathematical Society},
  year={1989},
  volume={2},
  pages={599-635},
  url={https://api.semanticscholar.org/CorpusID:121133856}
}

\end{document}